\newcommand{\p}{\partial}
\newcommand{\bz}{\mathbf{z}}
\newcommand{\M}{{\Omega}}
\newcommand{\mathd}{\mathrm{d}}
\def\d{\ensuremath{\mathrm{d}}}
\newcommand{\dps}{\displaystyle}
\newcommand{\norm}[1]{\|#1\|}
\newcommand{\xx}{{\bm{x}}}
\newcommand{\yy}{{\bm{y}}}
\newcommand{\zz}{{\bm{z}}}
\newcommand{\nn}{{\bm{n}}}
\newcommand{\RR}{{\mathbb{R}}}
\newcommand{\into}{{\int_\Omega}}
\newcommand{\intpo}{{\int_{\partial\Omega}}}
\newcommand{\Rd}{{R_{\delta}(\xx,\yy)}}
\newcommand{\bRd}{{\bar{R}_{\delta}(\xx,\yy)}}
\newcommand{\ud}{{u_{\delta}}}
\newenvironment{equationa}{\begin{equation}\begin{aligned}} {\end{aligned}\end{equation}}
\newenvironment{equationa*}{\begin{equation*}\begin{aligned}} {\end{aligned}\end{equation*}}
\begin{document}

\title{Maximum principle preserving nonlocal diffusion model with Dirichlet boundary condition
\thanks{This work was supported by National Natural Science Foundation of China (NSFC) 12071244, 92370125.}
        }


\author{
Yanzun Meng
\thanks{Department of Mathematical Sciences, Tsinghua University 
Beijing, China, 100084. \textit{Email: myz21@mails.tsinghua.edu.cn}
}
\and
Zuoqiang Shi
\thanks{Corresponding Author, Yau Mathematical Sciences Center, Tsinghua University 
Beijing, China, 100084. \&
Yanqi Lake Beijing Institute of Mathematical Sciences and Applications
 Beijing, China, 101408. \textit{Email: zqshi@tsinghua.edu.cn}}
}

\maketitle

\begin{abstract}
  In this paper, we propose nonlocal diffusion models with Dirichlet boundary. 
  These nonlocal diffusion models preserve the maximum principle and also have corresponding variational form. 
  With these good properties, we can prove the well-posedness and the vanishing nonlocality convergence. 
  Furthermore, by specifically designed weight function, we can get a nonlocal diffusion model with second order convergence which is optimal for nonlocal diffusion models.  
\end{abstract}

\begin{keywords}
  Nonlocal diffusion model; Dirichlet boundary; maximum principle; vanishing nonlocality convergence.
\end{keywords}

\begin{AMS}
45A05, 35J25, 35B50
\end{AMS}

\section{Introduction}
In the past decades, nonlocal models \cite{du2019nonlocal} along with many relevant rigorous analysis or numerical methods \cite{DElia_Du_Glusa_Gunzburger_Tian_Zhou_2020,du2017fast,mengesha2013analysis,trask2019asymptotically,zhang2018accurate,tian2014asymptotically,du2013nonlocal} attracted lots of attentions 
due to its wide applications in peridynamical theory, nonlocal wave propagation, nonlocal diffusion process \cite{alfaro2017propagation,bavzant2003nonlocal,blandin2016well,dayal2007real,kao2010random,vazquez2012nonlinear} and some emerging fields, such as semi-supervised learning \cite{wang2018non,tao2018nonlocal,shi2017weighted} and image processing \cite{osher2017low}.
The main advantage for nonlocal models is using integral operators to model the physical reality instead of differential operators which are invalid in some singular problems. 
Take peridynamics~\cite{askari2008peridynamics, bobaru2009convergence,dayal2006kinetics,oterkus2012peridynamic,silling2010crack,taylor2015two} for example, in classical elasticity, the relationship between stress/strain and deformation is described by partial differential equations, which are based on the assumptions that the material is continuous and the displacement field is continuously differentiable at every point. 
However, when the presence of defect, fracture or mixture in the material, these assumptions are violated, hence the differential model (local model) may fail. 
Peridynamics takes integral operators (nonlocal model) at every point in the material, describes the distinct physical properties by designing different integral kernel functions, which can handle these problems effectively.

Besides a proper model to describe the physical law, we also need some priors to determine the solution from the model. These priors are always been taken as boundary conditions of the modeled region. 
One most common studied nonlocal operator is 
\begin{equation}
\label{approximation0}
\mathcal{L}_{\delta}(u)(\xx)=\dfrac{1}{\delta^2}\into (u(\xx)-u(\yy))R_\delta(\xx,\yy)\d\yy,
\end{equation}
where both $R_\delta(\xx,\yy)$ and $\bar{R}_\delta(\xx,\yy)$ are kernel functions rescaled from some properly designed functions (see more details in Section \ref{sec:models}).  If $\xx$ is fixed, both $\mathrm{supp}_\yy(R_\delta(\xx,\yy))$ and 
$\mathrm{supp}_\yy(\bar{R}_\delta(\xx,\yy))$ are subsets of $B(\xx,2\delta)$. 
When $d(\xx,\partial\Omega)>2\delta$, 
it is well-known that $\mathcal{L}_\delta u$ can approximate the Laplace operator $-\Delta u$ with truncation error $O(\delta^2)$. 
However, this approximation can not hold when $\xx$ approaching the boundary. Therefore, how to impose boundary conditions in nonlocal model to capture the information near the boundary is the core issue. 
Many efforts are made to deal with the Neumann boundary condition, such as \cite{tao2017nonlocal} and \cite{you2020asymptotically} proposed their methods to achieve $O(\delta^2)$ convergence with analysis in one dimension and two dimension respectively. The point integral method \cite{li2017point,shi2017convergence} analyzed the convergence on general manifold and provided $O(\delta)$ convergence in $H^1$ norm.
Comparing with Neumann boundary condition, Dirichlet boundary condition is more difficult to correctly impose in nonlocal models. 
In this paper, we will focus on dealing with Dirichlet boundary condition, more specifically, we will modify (\ref{approximation0}) to approximate the Poisson equation with Dirichlet boundary
\begin{equation}
\label{eq:poisson}
\left\{
\begin{aligned}
-\Delta u(\xx)&=f(\xx),&\xx\in\Omega,\\
u(\xx)&=g(\xx),&\xx\in\partial\Omega.
\end{aligned}
\right.
\end{equation}

Based on the idea of volumetric constraint, Dirichlet boundary data should be extended to a layer adjacent to the boundary. 
The most straightforward approach is the constant extension method~\cite{macia2011theoretical}, which assume that Dirichlet boundary condition is constantly extended to a small neighborhood outside the domain, but it only provides first order convergence and the extension is difficult to compute for boundary with complicate geometry.  
Lee and Du~\cite{lee2021second} introduce a nonlocal gradient operator to mimic the extrapolation of the boundary data to the volumetric data, and obtain optimal $O(\delta^2)$ convergence in $L^2$ norm, but it is analysed only in one dimension. 
Zhang and Shi~\cite{zhang2021second} propose a $O(\delta^2)$ convergent model in $H^1$ norm, but it requires curvature of the boundary.

Our method is based on the point integral method~\cite{li2017point,shi2017convergence}. The Poisson equation is well approximated by an integral equation,
\begin{align}
\frac{1}{\delta^2}\int_{\Omega} R_\delta\left({\bm{x}}, {\bm{y}} \right) (u({\bm{x}}) - u({\bm{y}}))\mathrm{d} {\bm{y}} - 2\int_{\partial \Omega}\bar{R}_\delta \left({\bm{x}}, {\bm{y}} \right) \frac{\partial u}{\partial {\nn}}({\bm{y}})\mathrm{d} {\bm{y}} \nonumber\\
= \int_{\Omega} \bar{R}_\delta({\bm{x}}, {\bm{y}}) f({\bm{y}}) \mathrm{d} {{\bm{y}}},\quad {\bm{x}}\in \Omega. \label{eq:pim}
\end{align}

In the integral approximation \eqref{eq:pim}, the Dirichlet boundary condition can not be enforced directly, since the normal derivative $\frac{\partial u}{\partial {\nn}}$ is not given. 
One idea is to treat $\frac{\partial u}{\partial {\nn}}$ as an auxiliary variable  $v({\bm{x}})$, and construct the couple system of $u$ and $v$.
\begin{align*}
\frac{1}{\delta^2}\int_{\Omega} R_\delta\left({\bm{x}}, {\bm{y}} \right) (u({\bm{x}}) - u({\bm{y}}))\mathrm{d} {\bm{y}} -& 2\int_{\partial \Omega}\bar{R}_\delta\left({\bm{x}}, {\bm{y}} \right)v({\bm{y}}) \mathrm{d} {\bm{y}} \\
& = \int_{\Omega} \bar{R}_\delta({\bm{x}}, {\bm{y}}) f({\bm{y}}) \mathrm{d} {{\bm{y}}},\quad {\bm{x}}\in  \Omega\\
\frac{1}{\delta^2}\int_{\Omega} \bar{R}_\delta\left({\bm{x}}, {\bm{y}} \right) (g({\bm{x}}) - u({\bm{y}}))\mathrm{d} {\bm{y}} - &2v({\bm{x}})\int_{\partial \Omega}\bar{\bar{R}}_\delta\left({\bm{x}}, {\bm{y}} \right) \mathrm{d} {\bm{y}} \\
&= \int_{\Omega} \bar{\bar{R}}_\delta({\bm{x}}, {\bm{y}}) f({\bm{y}}) \mathrm{d} {{\bm{y}}},\quad {\bm{x}}\in \partial \Omega. 
\end{align*}
Here $\bar{\bar{R}}_\delta$ is another kernel function derived from $R_\delta$(details are illustrated in Section \ref{sec:models}).
It was proved in \cite{wang2023nonlocal} that the nonlocal model is wellposed and the solution converges to the counterpart local solution with the rate of $O(\delta)$ in $H^1$ norm.

The other approach to deal with the Dirichlet boundary condition is to approximate using Robin boundary condition.
 \begin{align}
     \label{eq:poisson-rubin}
u(\xx)+\delta \frac{\p u}{\p \nn}(\xx)=g(\xx),&\quad \xx\in \p\Omega.
 \end{align}
 Using the integral approximation \eqref{eq:pim}, we can get a nonlocal model
 \begin{align}
     \label{eq:nonlocal-rubin}
     \frac{1}{\delta^2}\int_\Omega R_\delta(\xx,\yy) (u_\delta(\xx)-u_\delta(\yy))\mathd \yy +&\frac{2}{\delta}\int_{\partial \Omega} \bar{R}_\delta(\xx,\yy) (u_\delta(\yy)-g(\yy))\mathd S_{\yy} \nonumber\\
  =& \int_\Omega \bar{R}_\delta(\xx,\yy)f(\yy)\mathd \yy,\quad \xx\in \bar{\Omega}.
 \end{align}
 This nonlocal model has been proposed and analyzed in \cite{li2017point,shi2018harmonic}. But the theoretical property of above nonlocal model is not very good. Both of the symmetry and maximum principle are destroyed which make it difficult to analyze. The proved convergence rate is relatively low ($O(\delta^{1/4})$), although the analysis is very complicated \cite{shi2018harmonic}.

Inspired by weighted nonlocal Laplacian \cite{shi2017weighted}, above nonlocal model can be modified a little to get another nonlocal model with better properties.
\begin{align}
\label{eq:nonlocal-1st}
   \frac{1}{\delta^2}\int_\Omega R_\delta(\xx,\yy) (u_\delta(\xx)-u_\delta(\yy))\mathd \yy +&\frac{2}{\delta}\int_{\partial \Omega} \bar{R}_\delta(\xx,\yy) (u_\delta(\xx)-g(\yy))\mathd S_{\yy} \nonumber\\
  &= \int_\Omega \bar{R}_\delta(\xx,\yy)f(\yy)\mathd \yy,\quad \xx\in \Omega.
\end{align}
This nonlocal model is symmetric and preserving maximum principle. It can be proved the convergence rate is first order in $L_\infty$ norm by maximum principle. By specifically designed weight function, the nonlocal model can achieve second order convergence rate which is optimal for the nonlocal diffusion model. 
 
The rest of the paper is organized as following. In Section \ref{sec:models}, we give the nonlocal models of the first order and second order separately and list the main theorems. 
In Section \ref{sec:maximum}, the maximum principle is given and the convergence in $L^\infty$ norm is proved based on maximum principle. 
In Section \ref{section:Well-posedness}, we prove the existence and uniqueness of solution to the nonlocal model along with the $H^1$ estimation of the solution. 
The convergence analysis in $H^1$ norm is provided in Section \ref{sec:H1convergence}. 
Some conclusion are made in Section \ref{sec:Discussion and Conclusion}.

\section{Nonlocal Models}
\label{sec:models}
In this paper, we impose the following assumptions on the domain $\Omega$ and a function $R(r)$ to define the integral kernel. 
\begin{assumption}
\label{assumption}
\begin{itemize}
\item Assumptions on the computational domain: $\M\subset \mathbb{R}^n$ is bounded and connected. $\p\M$ is smooth enough.

\item Assumptions on the function $R(r)$:
\begin{itemize}
\item[(a)] (regularity) $R\in C^1([0,1])$;
\item[(b)] (positivity and compact support)
$R(r)\ge 0$ and $R(r) = 0$ for $\forall r >1$;
\item[(c)] (nondegeneracy)
 $\exists \gamma_0>0$ so that $R(r)\ge\gamma_0$ for $0\le r\le\frac{1}{2}$.
\end{itemize}
\end{itemize}
\end{assumption}

$\bar{R}(r)$ and $\bar{\bar{R}}(r)$ are defined as 
\begin{align}
	\label{eq:kernelbar}
	\bar{R}(r)=\int_{r}^{+\infty}R(s)\d s,\quad \bar{\bar{R}}(r)=\int_{r}^{+\infty}\bar{R}(s)\d s
\end{align}
If $\tilde{R}$ refers to $R$,$\bar{R}$ or $\bar{\bar{R}}$, we define 
\begin{equation*}
	\tilde{R}_\delta(\xx,\yy)=C_\delta\tilde{R}\left(\dfrac{|\xx-\yy|^2}{4\delta^2}\right)=\alpha_n\delta^{-n}\tilde{R}\left(\dfrac{|\xx-\yy|^2}{4\delta^2}\right).
\end{equation*}
Here the constant $C_\delta=\alpha_n\delta^{-n}$ above
is a normalization factor so that
\begin{align*}
\int_{\mathbb{R}^n}\bar{R}_\delta(\xx,\yy)\mathd \yy=\alpha_n S_n \int_0^2 \bar{R}(r^2/4)r^{n-1}\mathd r=1.
\end{align*}
with $S_n$ denotes area of the unit sphere in $\mathbb{R}^n$.

We also need some notations about the neighbourhood of $\partial\Omega$. Let 
\[\Omega_\epsilon=\{\xx\in\Omega:d(\xx,\partial\Omega)<\epsilon\},\quad \Omega^\epsilon=\{\xx\in \RR^n\backslash \bar{\Omega}:d(\xx,\partial\Omega)<\epsilon\}.\]
where 
\[d(\xx,\partial\Omega)=\inf_{\yy\in \partial\Omega}|\xx-\yy|.\]
In addition, we denote 
\[V_\epsilon=\Omega\backslash \Omega_\epsilon,\quad V^\epsilon=\bar{\Omega}\cup \Omega^{\epsilon}.\]

Some basic estimates which are heavily used in nonlocal analysis are also listed here. 
\begin{proposition}
    \label{kernel estimation}
    Let $\tilde{R}$ be a function satisfying Assumption \ref{assumption} and 
    \[\tilde{R}_\delta(\xx,\yy)=\alpha_n\delta^{-n}\tilde{R}\left(\dfrac{|\xx-\yy|^2}{4\delta^2}\right),\]
    Then we have
    \[
    \begin{aligned}
    C_1\leq \into \tilde{R}_\delta(\xx,\yy)\d\yy\leq C_2,\quad&\forall \xx\in \Omega\cup\partial\Omega;\\
    \intpo \tilde{R}_\delta(\xx,\yy)\d\yy\leq \dfrac{C_3}{\delta},\quad&\forall \xx\in \Omega\cup\partial\Omega;\\
    \intpo \tilde{R}_\delta(\xx,\yy)\d\yy\geq \dfrac{C_4}{\delta},\quad&\forall \xx\in \Omega_{\frac{\sqrt{2}}{2}\delta};
    \end{aligned}
    \]
    where $C_i$ are constants independent of $\delta$.
\end{proposition}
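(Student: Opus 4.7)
I plan to prove all three estimates by elementary scaling together with standard geometric properties of the smooth bounded domain $\Omega$. First, I would record a uniform nondegeneracy statement for each of the three profiles $\tilde R\in\{R,\bar R,\bar{\bar R}\}$: there exist constants $c_*>0$ and $r_*\in(0,1)$, depending only on $\gamma_0$ and the shape of $R$ (not on $\delta$ or on which kernel is chosen), such that $\tilde R(r)\ge c_*$ for $r\in[0,r_*]$ and $\|\tilde R\|_{L^\infty[0,1]}\le M<\infty$. For $R$ itself this is just~(c); for $\bar R(r)=\int_r^{1}R(s)\,\mathrm{d}s$ one combines $R\ge\gamma_0\chi_{[0,1/2]}$ with positivity to obtain $\bar R(r)\ge\gamma_0(1/2-r)$ on $[0,1/2]$, and a further integration yields $\bar{\bar R}(r)\ge(\gamma_0/2)(1/2-r)^2$ on the same interval, so choosing $r_*=1/4$ works uniformly for all three kernels.

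For the upper bounds I combine a pointwise bound with a measure estimate. For $\int_{\Omega}\tilde R_\delta(\xx,\yy)\,\mathrm{d}\yy\le C_2$ I extend the integral to $\mathbb{R}^n$ and substitute $\yy=\xx+2\delta\zz$, reducing it to $2^n\alpha_n\int_{|\zz|\le 1}\tilde R(|\zz|^2)\,\mathrm{d}\zz$, a $\delta$-independent constant. For the surface upper bound I use the pointwise bound $\tilde R_\delta\le\alpha_n M\delta^{-n}$ together with the fact that, by $C^2$ regularity and compactness of $\partial\Omega$, $\mathcal H^{n-1}(\partial\Omega\cap B(\xx,2\delta))\le C\delta^{n-1}$ uniformly in $\xx\in\bar\Omega$ for $\delta$ smaller than a threshold $\delta_0$ determined by the curvature of $\partial\Omega$; multiplying yields $C_3/\delta$.

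For the lower bounds I exploit the uniform nondegeneracy interval. Setting $\rho_*=2\sqrt{r_*}\,\delta$, one has $\tilde R_\delta(\xx,\yy)\ge\alpha_n c_*\delta^{-n}$ whenever $|\xx-\yy|\le\rho_*$. Smoothness of $\partial\Omega$ provides $\kappa>0$ with $|\Omega\cap B(\xx,\rho_*)|\ge\kappa\delta^{n}$ for every $\xx\in\bar\Omega$ and $\delta$ small enough (locally $\Omega$ is a $C^2$ graph over its tangent half-space, so even in the worst case $\xx\in\partial\Omega$ roughly half the ball volume remains in $\Omega$ up to curvature corrections), and this produces $C_1$. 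For the surface lower bound, given $\xx\in\Omega_{\sqrt{2}\delta/2}$ pick $\xx_0\in\partial\Omega$ with $|\xx-\xx_0|=d(\xx,\partial\Omega)<\sqrt{2}\delta/2$; since $r_*=1/4$ forces $\rho_*=\delta>\sqrt{2}\delta/2$, the triangle inequality gives $B(\xx_0,\rho_*-\sqrt{2}\delta/2)\subset B(\xx,\rho_*)$, and by $C^2$ regularity this ball captures a patch of $\partial\Omega$ of $\mathcal H^{n-1}$-measure $\gtrsim\delta^{n-1}$, on which $\tilde R_\delta\ge\alpha_n c_*\delta^{-n}$; the surface integral is then at least $C_4/\delta$.

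The bulk of the argument is routine scaling; the main point of care is verifying that the geometric constants ($\kappa$, the surface bound $C$, and the size of the near-boundary patch) are uniform in $\xx\in\bar\Omega$ and stable as $\delta\to 0^+$. This is a standard consequence of the existence of a uniform-radius tubular neighborhood of $\partial\Omega$, available since $\partial\Omega$ is smooth and compact, and I would cite it rather than reprove it.
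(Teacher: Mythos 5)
Your proposal is correct. The paper does not actually supply a proof of Proposition~\ref{kernel estimation}; it is stated as a collection of ``basic estimates which are heavily used in nonlocal analysis'' and left to the reader, so there is no paper argument to compare against. Your plan fills this gap in the standard way: (i) you observe that the nondegeneracy in Assumption~\ref{assumption}(c) propagates to $\bar R$ and $\bar{\bar R}$ with a slightly shrunk but $\delta$-independent interval, since $\bar R(r)\ge\gamma_0(1/2-r)$ and $\bar{\bar R}(r)\ge(\gamma_0/2)(1/2-r)^2$ on $[0,1/2]$, so $r_*=1/4$ and $c_*=\gamma_0/32$ work for all three profiles; (ii) you get the two upper bounds by scaling $\yy=\xx+2\delta\zz$ respectively by the pointwise bound $\tilde R_\delta\le\alpha_nM\delta^{-n}$ together with $\mathcal H^{n-1}(\partial\Omega\cap B(\xx,2\delta))\lesssim\delta^{n-1}$; and (iii) you get the two lower bounds by combining the uniform kernel floor on $B(\xx,\rho_*)$ with an interior-ball/volume estimate and, for the surface case, a triangle-inequality argument locating a boundary patch of radius $\rho_*-\tfrac{\sqrt2}{2}\delta>0$ inside the kernel's support. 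All of these steps are sound, the constants you identify are visibly independent of $\delta$, and the geometric uniformity you defer to a tubular-neighborhood citation is indeed standard for a compact smooth $\partial\Omega$. One small remark worth noting if you write this out in full: the paper's own Appendix~A tacitly uses $\bar R(r)\ge\gamma_0$ for $r<1/2$, which does not literally follow from Assumption~\ref{assumption}(c); your cleaner derivation of a uniform nondegeneracy interval $[0,r_*]$ valid simultaneously for $R,\bar R,\bar{\bar R}$ is the right way to make such steps rigorous.
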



\subsection{First order nonlocal model}

The first nonlocal model we consider is as follows
\begin{align}
\label{nonlocal}
\dfrac{1}{\delta^2}\int_{\Omega}R_\delta(\xx,\yy)(u_{\delta}(\xx)-u_{\delta}(\yy))\d \yy+&\dfrac{2}{\delta}\int_{\partial\Omega}\bar{R}_{\delta}(\xx,\yy)(u_{\delta}(\xx)-g(\yy))\d S_{\yy}\nonumber\\
&=\int_{\Omega}\bar{R}_{\delta}(\xx,\yy)f(\yy)\d \yy.
\end{align}

It is easy to check that above nonlocal model has equivalent variational form
\begin{align*}
    \min_{u\in L^2(\Omega)} 
    &\left\{\frac{1}{4\delta^2}\int_\Omega\int_\Omega R_\delta(\xx,\yy) (u_\delta(\xx)-u_\delta(\yy))^2\mathd \yy\right. \\
	+\frac{1}{\delta}&\int_\Omega\int_{\partial \Omega} \bar{R}_\delta(\xx,\yy) (u_\delta(\xx)-g(\yy))^2\mathd S_{\yy}\mathd \xx
    \left.- \int_\Omega \int_\Omega \bar{R}_\delta(\xx,\yy)f(\yy)u_\delta(\xx)\mathd \xx\mathd \yy\right\}\nonumber
\end{align*}
The second term of the energy functional is a penalty term to enforce the Dirichlet boundary condition. $1/\delta$ is corresponding penalty weight. Using the variational form, well-posedness of the nonlocal model can be proved based on Lax-Mligram theorem. Moreover, we can also get the $H^1$ estimation of the nonlocal model as stated in Theorem \ref{wellpose1}.

\begin{theorem}
\label{wellpose1}
For a fixed $\delta>0$ and $f\in L^2(\Omega)$, $g\in L^2(\partial\Omega)$ there exists a unique solution $u_\delta\in L^2(\Omega)$ to the integral equation (\ref{nonlocal}). Moreover, $u_\delta\in H^1(\Omega)$ and there exists a constant $C>0$ independent of $\delta$, such that 
\[\norm{u_\delta}_{H^1(\Omega)}\leq C \norm{f}_{L^2(\Omega)}+\dfrac{C}{\sqrt{\delta}}\norm{g}_{L^2(\partial\Omega)}.\]
\end{theorem}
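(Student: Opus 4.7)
The plan is to exploit the quadratic variational formulation displayed immediately before the theorem. Introduce the symmetric bilinear form
\begin{equationa*}
a(u,v) =& \frac{1}{2\delta^2}\int_\Omega\int_\Omega R_\delta(\xx,\yy)(u(\xx)-u(\yy))(v(\xx)-v(\yy))\,\mathd\yy\mathd\xx\\
&+\frac{2}{\delta}\int_\Omega\int_{\partial\Omega}\bar{R}_\delta(\xx,\yy)\,u(\xx) v(\xx)\,\mathd S_\yy\mathd\xx,
\end{equationa*}
and let $L(v)$ be the associated linear functional built from the source $f$ and the boundary data $g$. Equation (\ref{nonlocal}) is precisely the Euler--Lagrange identity $a(u_\delta,v)=L(v)$ for every $v\in L^2(\Omega)$, and continuity of $a$ and $L$ on $L^2(\Omega)$ follows from Proposition~\ref{kernel estimation}.

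The first substantive step is well-posedness via Lax--Milgram in $L^2(\Omega)$; the crux is coercivity, $a(u,u)\ge c(\delta)\|u\|_{L^2(\Omega)}^2$. This is a nonlocal Dirichlet-type Poincar\'e inequality in which the boundary integral plays the role of a trace penalty: the lower bound $\int_{\partial\Omega}\bar R_\delta\,\mathd S_\yy\ge C_4/\delta$ from Proposition~\ref{kernel estimation} controls $u$ in $L^2$ on the thin layer $\Omega_{\sqrt 2\delta/2}$, and a chain-of-balls argument using the nondegeneracy of $R$ on $[0,1/2]$ propagates the control to all of $\Omega$. Once existence and uniqueness are in place, testing the variational identity with $v=u_\delta$ and applying Cauchy--Schwarz and Young's inequality yields the uniform-in-$\delta$ bound
\begin{equationa*}
\|u_\delta\|_{L^2(\Omega)}^2 + \frac{1}{\delta^2}\int_\Omega\int_\Omega R_\delta(\xx,\yy)(u_\delta(\xx)-u_\delta(\yy))^2\,\mathd\yy\mathd\xx \leq C\|f\|_{L^2(\Omega)}^2 + \frac{C}{\delta}\|g\|_{L^2(\partial\Omega)}^2.
\end{equationa*}

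The last and hardest step is to upgrade this nonlocal seminorm bound to a genuine $L^2$ estimate on $\nabla u_\delta$ with a $\delta$-independent constant. I plan to rewrite (\ref{nonlocal}) in fixed-point form
\begin{equationa*}
w_\delta(\xx)\,u_\delta(\xx) = \frac{1}{\delta^2}\int_\Omega R_\delta(\xx,\yy) u_\delta(\yy)\,\mathd\yy + \frac{2}{\delta}\int_{\partial\Omega}\bar R_\delta(\xx,\yy) g(\yy)\,\mathd S_\yy + \int_\Omega \bar R_\delta(\xx,\yy) f(\yy)\,\mathd\yy,
\end{equationa*}
where $w_\delta\ge C\delta^{-2}$ by Proposition~\ref{kernel estimation}, and then differentiate in $\xx$, pushing the gradient onto the $C^1$ kernels. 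The algebraic cancellation
\begin{equationa*}
\nabla_\xx\!\Big(\tfrac{1}{\delta^2}\int_\Omega R_\delta u_\delta(\yy)\mathd\yy\Big) - u_\delta(\xx)\nabla_\xx\!\Big(\tfrac{1}{\delta^2}\int_\Omega R_\delta\mathd\yy\Big) = \tfrac{1}{\delta^2}\int_\Omega \nabla_\xx R_\delta(\xx,\yy)\bigl(u_\delta(\yy)-u_\delta(\xx)\bigr)\,\mathd\yy
\end{equationa*}
exhibits $\nabla u_\delta$ as an integral of the $\delta^{-1}$-scale kernel derivative against the increment $u_\delta(\yy)-u_\delta(\xx)$; Cauchy--Schwarz then converts the nonlocal seminorm bound into the desired $L^2$ bound on $\nabla u_\delta$, the powers of $\delta$ in $|\nabla_\xx R_\delta|\lesssim \delta^{-1}\widetilde R_\delta$ and in $w_\delta^{-1}\lesssim \delta^2$ cancelling. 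The main obstacle will be the boundary contribution to $\nabla w_\delta$ coming from $\frac{2}{\delta}\int_{\partial\Omega}\bar R_\delta\,\mathd S_\yy$ near $\partial\Omega$, together with the need to dominate the auxiliary kernel $\widetilde R$ (arising from $|R'|$) by $R$ via Assumption~\ref{assumption}(c).
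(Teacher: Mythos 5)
Your overall architecture matches the paper's: Lax--Milgram for existence and uniqueness, testing with $u_\delta$ for the nonlocal energy bound, and a fixed-point rewriting followed by differentiation of the $C^1$ kernels for the $\nabla u_\delta$ estimate. The algebraic cancellation you display is exactly the quotient-rule identity the paper invokes (citing \cite{NonlocalStokes,shi2017convergence}) to bound the ``interior'' part $\|\nabla(\frac{1}{w_\delta}\int_\Omega R_\delta u_\delta)\|_{L^2}$ by the nonlocal seminorm. So far so good.

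There are two genuine gaps. First, your coercivity sketch does not work as stated. You claim that control of $u$ on $\Omega_{\sqrt2\delta/2}$ via the boundary term, together with ``a chain-of-balls argument,'' propagates $L^2$ control to all of $\Omega$ with a $\delta$-independent constant. But the kernel has support of radius $2\delta$, so each link in the chain advances only $O(\delta)$, and crossing a distance of order $1$ costs a multiplicative constant at each of $O(1/\delta)$ steps --- giving a coercivity constant that degenerates as $\delta\to0$. Since you explicitly want the subsequent estimate $\|u_\delta\|_{L^2}^2 + \frac{1}{\delta^2}\iint R_\delta(u_\delta(\xx)-u_\delta(\yy))^2 \le C\|f\|^2_{L^2}+\frac{C}{\delta}\|g\|^2_{L^2(\partial\Omega)}$ to be uniform in $\delta$, you cannot afford a $\delta$-dependent coercivity constant. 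The paper avoids this entirely: it introduces the smoothed function $\hat p(\xx)=\frac{1}{\bar w_\delta(\xx)}\int_\Omega\bar R_\delta(\xx,\yy)p(\yy)\mathd\yy$, cites the facts that the nonlocal Dirichlet form dominates $\|\nabla\hat p\|^2_{L^2(\Omega)}$ and $\|p-\hat p\|^2_{L^2(\Omega)}$, shows the boundary penalty dominates $\|\hat p\|^2_{L^2(\partial\Omega)}$, and then closes with the \emph{classical} Poincar\'e inequality with boundary, $\|\nabla\hat p\|^2_{L^2(\Omega)}+\|\hat p\|^2_{L^2(\partial\Omega)}\ge C\|\hat p\|^2_{L^2(\Omega)}$. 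The chain-of-balls device does appear in the paper --- in Lemma~\ref{corelemma} --- but there it only needs to cross a $\delta$-thickness annulus in a \emph{fixed finite} number of $\delta$-steps, which is harmless.

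Second, you correctly identify the boundary contribution to the gradient of the normalization as ``the main obstacle'' but give no plan to overcome it, and it is far from routine. After decomposing $u_\delta=I_1+I_2+I_3$ as the paper does, the hard terms are $I_{2,2}$ and $I_{2,3}$, which involve $\nabla s_\delta(\xx)$ and $s_\delta(\xx)\nabla\tilde u_\delta(\xx)$ with $s_\delta(\xx)=\int_{\partial\Omega}\bar R_\delta\mathd S_\yy\sim\delta^{-1}$ near $\partial\Omega$. Controlling these requires two dedicated lemmas: Lemma~\ref{yz kernel}, which bounds $\int_\Omega\tilde R_\delta(\xx,\zz)|\nabla_\xx\hat R_\delta(\xx,\yy)|\mathd\xx$ by a rescaled kernel in $|\yy-\zz|$, and Lemma~\ref{corelemma}, which converts the resulting boundary-weighted $L^2$ quantity back into the two terms of the a priori estimate via the finite chain-of-balls mentioned above. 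Neither follows from Assumption~\ref{assumption}(c) and Cauchy--Schwarz alone; without them the $\delta$-powers in $I_{2,2},I_{2,3}$ do not cancel. Your proposal also leaves the boundary data term $I_3$ and the source term $I_1$ to an unstated ``direct'' calculation, which is fine, but the asymmetry in your displayed cancellation (you cancel $\nabla_\xx\int R_\delta$ but the actual denominator is $\phi_\delta=w_\delta+2\delta s_\delta$) means the identity you wrote does not by itself expose the boundary-layer terms at all.
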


In above theorem, $f\in L^2(\Omega)$ can be relaxed to $f\in H^{-1}(V^{2\delta})$, the result still holds.
\begin{theorem}
\label{thm:h-1}
For a fixed $\delta>0$ and $f\in H^{-1}(V^{2\delta})$, $g\in L^2(\partial\Omega)$, there exists a unique solution $u_\delta\in L^2(\Omega)$ to the modified model 
\begin{align}
    \label{modified nonlocal}
    \dfrac{1}{\delta^2}\int_{\Omega}R_\delta(\xx,\yy)(u_{\delta}(\xx)-u_{\delta}(\yy))\d \yy+&\dfrac{2}{\delta}\int_{\partial\Omega}\bar{R}_{\delta}(\xx,\yy)(u_{\delta}(\xx)-g(\yy))\d S_{\yy}\nonumber\\
    &=\langle f, \bar{R}\rangle(\xx), \quad \xx\in \Omega.
\end{align}
    where $\langle f, \bar{R}\rangle$ is the dual product between $f\in H^{-1}(V^{2\delta})$ and $\bar{R}(\xx,\cdot)\in H_0^1(V^{2\delta})$.
    Further more, $u_\delta\in H^1(\Omega)$ and 
    \[\norm{u}_{H^1(\Omega)}\leq C\norm{f}_{H^{-1}(V^{2\delta})}+\dfrac{C}{\sqrt{\delta}}\norm{g}_{L^2(\partial\Omega)}.\]
\end{theorem}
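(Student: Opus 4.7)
The approach is to adapt the variational argument used for Theorem \ref{wellpose1}. Since \eqref{modified nonlocal} differs from \eqref{nonlocal} only in the right-hand side, the bilinear form
\[
B(u,v)=\frac{1}{2\delta^2}\int_\Omega\int_\Omega R_\delta(\xx,\yy)(u(\xx)-u(\yy))(v(\xx)-v(\yy))\,\d\yy\,\d\xx+\frac{2}{\delta}\int_\Omega\int_{\partial\Omega}\bar{R}_\delta(\xx,\yy)u(\xx)v(\xx)\,\d S_\yy\,\d\xx
\]
and its coercivity, together with the nonlocal-to-local norm equivalence that yields the $H^1$ bound, are inherited verbatim from the previous theorem. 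To set up the new linear functional, the first task is to check that $\bar{R}_\delta(\xx,\cdot)\in H^1_0(V^{2\delta})$ for every $\xx\in\Omega$: Assumption \ref{assumption}(a) gives $\bar{R}\in C^1$, and $\bar{R}_\delta(\xx,\cdot)$ is compactly supported in $\overline{B(\xx,2\delta)}\subset V^{2\delta}$ and vanishes on $\partial V^{2\delta}$, so the dual pairing $\langle f,\bar{R}_\delta(\xx,\cdot)\rangle$ is well defined.

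The linear functional is then
\[
L(v)=\int_\Omega\langle f,\bar{R}_\delta(\xx,\cdot)\rangle v(\xx)\,\d\xx+\frac{2}{\delta}\int_\Omega\int_{\partial\Omega}\bar{R}_\delta(\xx,\yy)g(\yy)v(\xx)\,\d S_\yy\,\d\xx.
\]
I would interchange the dual pairing with the outer $\xx$-integral, justified by a Bochner--Fubini identity for $f\in L^2(V^{2\delta})$ and then extended by density of $L^2(V^{2\delta})$ in $H^{-1}(V^{2\delta})$, to obtain
\[
\int_\Omega\langle f,\bar{R}_\delta(\xx,\cdot)\rangle v(\xx)\,\d\xx=\langle f,w_v\rangle_{H^{-1},H^1_0},\qquad w_v(\yy):=\int_\Omega\bar{R}_\delta(\xx,\yy)v(\xx)\,\d\xx.
\]
The Lax--Milgram argument then reduces to the uniform convolution estimate
\[
\|w_v\|_{H^1_0(V^{2\delta})}\le C\bigl(B(v,v)+\|v\|_{L^2(\Omega)}^2\bigr)^{1/2}
\]
with $C$ independent of $\delta$.

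Proving this convolution estimate is the main obstacle. The $L^2$ part follows from Cauchy--Schwarz and Proposition \ref{kernel estimation}. For the gradient, the identity $\nabla_\yy\bar{R}_\delta(\xx,\yy)=-\frac{\yy-\xx}{2\delta^2}R_\delta(\xx,\yy)$ is central, and I would split $V^{2\delta}=V_{2\delta}\cup\Omega_{2\delta}\cup\partial\Omega\cup\Omega^{2\delta}$. On the interior piece $V_{2\delta}$ the support of $\bar{R}_\delta(\cdot,\yy)$ lies entirely inside $\Omega$, and by oddness of the integrand in $\xx-\yy$ one has $\int_\Omega\nabla_\yy\bar{R}_\delta(\xx,\yy)\,\d\xx=0$; this lets one replace $v(\xx)$ by $v(\xx)-v(\yy)$ and control the resulting integral by the nonlocal Dirichlet seminorm. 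In the boundary strips $\Omega_{2\delta}\cup\Omega^{2\delta}$ the symmetry fails, leaving an extra term $v(\yy)\int_\Omega\nabla_\yy\bar{R}_\delta(\xx,\yy)\,\d\xx$ (with $v\equiv 0$ extended to $\Omega^{2\delta}$); the lower bound in Proposition \ref{kernel estimation}(c) converts the $L^2$-mass of $v$ inside $\Omega_{\sqrt{2}\delta/2}$ into a multiple of the boundary penalty $\frac{2}{\delta}\int_\Omega\int_{\partial\Omega}\bar{R}_\delta v(\xx)^2\,\d S_\yy\,\d\xx$, while the remaining collar $\Omega_{2\delta}\setminus\Omega_{\sqrt{2}\delta/2}$ is absorbed by a nonlocal trace-type estimate along the normal direction powered by the nonlocal Dirichlet seminorm. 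The delicacy is that a crude bound produces a spurious factor $1/\delta$; eliminating it is precisely what this layered accounting achieves.

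With the convolution estimate in hand, Lax--Milgram produces a unique $u_\delta\in L^2(\Omega)$ solving \eqref{modified nonlocal}; coercivity of $B$ gives $B(u_\delta,u_\delta)\le C(\|f\|_{H^{-1}(V^{2\delta})}^2+\delta^{-1}\|g\|_{L^2(\partial\Omega)}^2)$, and the $H^1$ regularity and the announced bound $\|u_\delta\|_{H^1(\Omega)}\le C\|f\|_{H^{-1}(V^{2\delta})}+C\delta^{-1/2}\|g\|_{L^2(\partial\Omega)}$ follow from the same nonlocal-to-local comparison used in Theorem \ref{wellpose1}, the $g$-contribution being handled identically to that earlier argument.
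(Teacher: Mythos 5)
Your overall strategy coincides with the paper's: the same bilinear form and Lax--Milgram; passing the $H^{-1}$ pairing through the outer $\xx$-integral via $w_v(\yy)=\int_\Omega\bar{R}_\delta(\xx,\yy)v(\xx)\,\d\xx$; a $\delta$-uniform estimate $\|w_v\|_{H_0^1(V^{2\delta})}^2\le C\bigl(B(v,v)+\|v\|_{L^2(\Omega)}^2\bigr)$; and the energy bound that follows by testing with $u_\delta$. Your split of $V^{2\delta}$ into an interior region (where oddness kills $\int_\Omega\nabla_\yy\bar{R}_\delta(\xx,\yy)\,\d\xx$) and a boundary collar (where a layered trace-type estimate trades the $L^2$-mass near $\partial\Omega$ for the nonlocal Dirichlet seminorm and the boundary penalty) is exactly what Lemma~\ref{corelemma} encapsulates. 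The paper avoids the case split more cleanly: extend $u_\delta$ by zero to $\Omega^{2\delta}$ and note that $\int_{V^{2\delta}}\bar{R}_\delta(\xx,\yy)\,\d\yy\equiv 1$ is constant for every $\xx\in\Omega$, so $\int_{V^{2\delta}}\nabla_\xx\bar{R}_\delta(\xx,\yy)\,\d\yy=0$ uniformly; the cross terms produced by the zero extension are precisely the collar contribution you treat separately. The two are morally equivalent.

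There is, however, a genuine gap in your final paragraph. You assert that the $H^1$ bound for $u_\delta$ itself, not merely the energy, ``follows from the same nonlocal-to-local comparison used in Theorem~\ref{wellpose1}.'' Passing from the energy estimate to $\|u_\delta\|_{H^1(\Omega)}$ goes through the closed-form representation
\[
u_\delta(\xx)=\frac{\delta^2\langle f,\bar{R}_\delta\rangle(\xx)}{\phi_\delta(\xx)}+\frac{1}{\phi_\delta(\xx)}\int_\Omega R_\delta(\xx,\yy)u_\delta(\yy)\,\d\yy+\frac{2\delta}{\phi_\delta(\xx)}\int_{\partial\Omega}\bar{R}_\delta(\xx,\yy)g(\yy)\,\d S_\yy,
\]
and the gradient of the first term cannot be estimated by the $L^2$-case computation of Section~\ref{H1wellpose}: that computation relies on pointwise kernel estimates applied to $\nabla\hat f(\xx)=\int_\Omega\nabla_\xx\bar{R}_\delta(\xx,\yy)f(\yy)\,\d\yy$, which requires $f\in L^2$. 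For $f\in H^{-1}(V^{2\delta})$ the paper proves the needed bound
\[
\left\|\nabla\left(\frac{\delta^2\langle f,\bar{R}_\delta\rangle(\xx)}{\phi_\delta(\xx)}\right)\right\|_{L^2(\Omega)}\le C\|f\|_{H^{-1}(V^{2\delta})}
\]
by a separate duality argument (Appendix~\ref{Appendix grad first term}): test the gradient against an arbitrary $\alpha\in L^2(\Omega)$, move the differentiation inside the pairing, and bound the resulting test functions $\tilde\alpha(\yy)=\int_\Omega\nabla_\xx\bar{R}_\delta(\xx,\yy)\alpha(\xx)\,\d\xx$ and their weighted variants (involving $\nabla w_\delta$ and $\nabla s_\delta$) in $H_0^1(V^{2\delta})$; this uses the estimate $|\nabla_\yy\nabla_\xx\bar{R}_\delta(\xx,\yy)|\le C\delta^{-2}\bigl(|R'_\delta(\xx,\yy)|+R_\delta(\xx,\yy)\bigr)$, which is new relative to the $L^2$ proof. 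Without supplying this additional estimate, your proposed proof of the $H^1$ bound is incomplete.
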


Based on the energy estimation, we can also prove the nonlocality vanishing convergence of the nonlocal model (\ref{nonlocal}) in $H^1$ norm.
\begin{theorem}
	\label{Convergence theorem}
	Let $u_\delta$ is the solution is the solution of (\ref{nonlocal}) and the Poisson equation \eqref{eq:poisson} has solution $u\in H^3(\Omega)$. Then 
	\[\norm{u-u_\delta}_{H^1(\Omega)}\leq C\sqrt{\delta}\norm{u}_{H^3(\Omega)}.\]
	Where $C$ is a constant independent of $\delta$.
\end{theorem}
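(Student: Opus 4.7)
The plan is to substitute the exact Poisson solution $u$ into (\ref{nonlocal}), read off the consistency residual, and propagate it through the coercivity of the bilinear form $a(\cdot,\cdot)$ that underlies Theorems~\ref{wellpose1} and~\ref{thm:h-1}. Because $u=g$ on $\partial\Omega$, subtracting the equation for $u_\delta$ from the one obtained by plugging $u$ into (\ref{nonlocal}) shows that the error $e:=u-u_\delta$ satisfies the same nonlocal equation with $g\equiv 0$ and right-hand side $r=r_{\mathrm{int}}+r_{\mathrm{bdy}}$, where
\begin{equation*}
r_{\mathrm{int}}(\xx)=\frac{1}{\delta^{2}}\into R_\delta(u(\xx)-u(\yy))\,\d\yy-2\intpo \bar R_\delta\,\partial_{\nn}u\,\d S_{\yy}-\into\bar R_\delta\, f\,\d\yy
\end{equation*}
is the standard point-integral-method interior truncation, and
\begin{equation*}
r_{\mathrm{bdy}}(\xx)=\frac{2}{\delta}\intpo\bar R_\delta\bigl[u(\xx)-u(\yy)+\delta\partial_{\nn}u(\yy)\bigr]\d S_{\yy}
\end{equation*}
is the Robin-type boundary residual. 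Testing with $v=e$ and using $a(e,e)\geq c_{0}\|e\|_{H^{1}(\Omega)}^{2}$ reduces the proof to bounding both $|\langle r_{\mathrm{int}},e\rangle|$ and $|\langle r_{\mathrm{bdy}},e\rangle|$ by $C\sqrt{\delta}\,\|u\|_{H^{3}}\sqrt{a(e,e)}$.

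For $r_{\mathrm{int}}$ I would rely on the standard point-integral-method machinery: the identity $\nabla_{\yy}\bar R_\delta(\xx,\yy)=\frac{1}{2\delta^{2}}R_\delta(\xx,\yy)(\xx-\yy)$, together with a third-order Taylor expansion and integration by parts in $\yy$, lets one write $r_{\mathrm{int}}(\xx)=\int_{\Omega}\bar R_\delta(\xx,\yy)G(\yy)\,\d\yy$ with $\|G\|_{L^{2}(V^{2\delta})}\leq C\delta\|u\|_{H^{3}}$, yielding $|\langle r_{\mathrm{int}},e\rangle|\leq C\delta\|u\|_{H^{3}}\|e\|_{H^{1}}$, safely within the $\sqrt{\delta}$ budget. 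The genuine obstacle is $r_{\mathrm{bdy}}$: its pointwise size in $\Omega_{2\delta}$ is only $O(1/\delta)$, so a naive $L^{2}$ estimate would lose exactly the factor $\sqrt{\delta}$ we need. The trick I would use is a weighted Cauchy--Schwarz against the positive measure $\bar R_\delta\,\d S_{\yy}\d\xx$,
\begin{equation*}
|\langle r_{\mathrm{bdy}},e\rangle|\leq \frac{2}{\delta}\Bigl(\into\intpo\bar R_\delta\,e(\xx)^{2}\,\d S_{\yy}\d\xx\Bigr)^{1/2}\Bigl(\into\intpo\bar R_\delta\,A(\xx,\yy)^{2}\,\d S_{\yy}\d\xx\Bigr)^{1/2},
\end{equation*}
with $A(\xx,\yy):=u(\xx)-u(\yy)+\delta\partial_{\nn}u(\yy)$. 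The first factor equals, up to the constant $\sqrt{\delta/2}$, the square root of the boundary component of $a(e,e)$, and is therefore $\leq C\sqrt{\delta}\sqrt{a(e,e)}$. For the second factor, a first-order Taylor expansion at $\yy$ gives $|A|^{2}\leq C|\nabla u(\yy)|^{2}|\xx-\yy|^{2}+C\delta^{2}|\partial_{\nn}u(\yy)|^{2}+O(\delta^{4}\|u\|_{C^{2}}^{2})$, and combined with the kernel moment estimate $\int_{\Omega}\bar R_\delta|\xx-\yy|^{2}\d\xx\leq C\delta^{2}$ (a direct consequence of Proposition~\ref{kernel estimation}) and the trace inequality $\|\nabla u\|_{L^{2}(\partial\Omega)}\leq C\|u\|_{H^{2}(\Omega)}$, it is bounded by $C\delta\|u\|_{H^{3}}$. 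The prefactor $2/\delta$ absorbs exactly, so $|\langle r_{\mathrm{bdy}},e\rangle|\leq C\sqrt{\delta}\|u\|_{H^{3}}\sqrt{a(e,e)}$.

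Inserting both bounds into $a(e,e)=\langle r_{\mathrm{int}}+r_{\mathrm{bdy}},e\rangle$ yields $a(e,e)\leq C\sqrt{\delta}\|u\|_{H^{3}}\sqrt{a(e,e)}$, hence $a(e,e)\leq C\delta\|u\|_{H^{3}}^{2}$, and coercivity closes the loop: $\|e\|_{H^{1}(\Omega)}\leq C\sqrt{\delta}\|u\|_{H^{3}(\Omega)}$. The main obstacle in the plan is precisely the weighted Cauchy--Schwarz step: the boundary penalty inside $a(e,e)$ is exactly what supplies the $\sqrt{\delta}$ improvement that a straight pointwise $L^{2}$ estimate of $r_{\mathrm{bdy}}$ would lose, and without exploiting the coercive form as a \emph{source} of smallness on $e$ near $\partial\Omega$ one is stuck with the $O(1)$ bound.
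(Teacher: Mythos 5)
Your overall architecture --- form the error equation, split the residual into an interior part and a Robin-type boundary part, test with $e_\delta$, and absorb the boundary residual against the boundary penalty in the quadratic form --- is exactly the paper's strategy, and your weighted Cauchy--Schwarz against $\bar R_\delta\,\d S_{\yy}\,\d\xx$ is the same mechanism as the paper's $\epsilon$-Young splitting of $\into\psi\,e_\delta$: both trade a factor of $\sqrt{\delta}$ off the $O(1/\delta)$ boundary residual by routing part of it back into the coercive form. Your exponent bookkeeping there is correct.

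The genuine gap is the sentence ``coercivity closes the loop: $\|e\|_{H^1(\Omega)}\le C\sqrt{\delta}\|u\|_{H^3(\Omega)}$.'' The bilinear form underlying Theorems~\ref{wellpose1} and~\ref{thm:h-1} is a bounded form on $L^2(\Omega)\times L^2(\Omega)$, and Proposition~\ref{coercivity} only gives $B[p,p]\ge C\|p\|_{L^2(\Omega)}^2$ --- \emph{not} $\|p\|_{H^1(\Omega)}^2$. Indeed $H^1$-coercivity with a $\delta$-independent constant cannot hold: for $p$ highly oscillatory at frequency $|k|\gg 1/\delta$ one has $a(p,p)=O(\delta^{-2})$ while $\|p\|_{H^1}^2\sim|k|^2$ is unbounded. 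So the chain $a(e,e)\le C\delta\|u\|_{H^3}^2 \Rightarrow \|e\|_{H^1}^2\le C\delta\|u\|_{H^3}^2$ does not follow from anything proved (or provable) about $a$. What the paper actually does is split the $H^1$ norm: coercivity controls $\|e_\delta\|_{L^2}$, but $\|\nabla e_\delta\|_{L^2}$ is estimated \emph{separately} by differentiating the closed-form solution representation (\ref{closed form}) --- this is the content of Section~\ref{H1wellpose} and Remark~\ref{replace}, which carry over to (\ref{errorH1}) with $\hat f$ replaced by $r+\psi$. That route forces you to also bound $\|r+\psi\|_{L^2}$ and $\delta^4\|\nabla(r+\psi)\|_{L^2}^2$, which is where the estimates $\|\psi\|_{L^2}^2\le C\delta^{-1}\|u\|_{H^2}^2$ and $\|\nabla\psi\|_{L^2}^2\le C\delta^{-3}\|u\|_{H^3}^2$ and the decomposition $r=r_{\mathrm{in}}+r_{\mathrm{bd}}$ of Lemmas~\ref{decompose}--\ref{decompose2} enter. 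Your proposal omits all of this, and consequently the gradient half of the conclusion is unproved.

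A smaller issue: your pointwise Taylor bound $|A|^2\le C|\nabla u(\yy)|^2|\xx-\yy|^2+C\delta^2|\partial_\nn u(\yy)|^2+O(\delta^4\|u\|_{C^2}^2)$ invokes $C^2$ regularity, which is not available from $u\in H^3(\Omega)$ in general dimension. The paper avoids this by the line-segment change-of-variables argument (\ref{oneorderdiff})--(\ref{changevariable}) giving $\delta^{-1}\into\intpo\bar R_\delta(u(\xx)-u(\yy))^2\le C\delta\|u\|_{H^2}^2$ directly in integral form. Likewise, your claim that the full interior residual can be written as $\int_\Omega\bar R_\delta G\,\d\yy$ with $\|G\|_{L^2}\le C\delta\|u\|_{H^3}$ conflates the paper's $r_{\mathrm{in}}$ (which does satisfy an $L^2$ bound of order $\delta$) with the boundary-layer piece $r_{\mathrm{bd}}$, which only satisfies $\|r_{\mathrm{bd}}\|_{L^2}^2\le C\delta\|u\|_{H^3}^2$ and needs the weak-form pairing bound of Lemma~\ref{decompose2} to gain back a $\delta$.
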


One important property of the nonlocal model \eqref{nonlocal} is that maximum principle is preserved. Based on the maximum principle, we can get the nonlocality vanishing convergence in $L^\infty$ norm. 
\begin{theorem}
	\label{one order L_infty}
	Let $u$ and $u_\delta$ be the solution of the local model \eqref{eq:poisson} and nonlocal model \eqref{eq:nonlocal-1st} respectively. Suppose $u\in C^4(\bar{\Omega})$, then there exists $C>0$ independent on $\delta$ such that for any $\xx\in \Omega$
	$$|u(\xx)-u_\delta(\xx)|\le C\delta.$$
\end{theorem}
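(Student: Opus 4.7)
The plan is to apply the maximum principle established earlier in Section \ref{sec:maximum} to the error $e(\xx):=u(\xx)-u_\delta(\xx)$. Substituting the local solution $u$ into the nonlocal model \eqref{eq:nonlocal-1st} and subtracting the equation satisfied by $u_\delta$ yields, by linearity,
\begin{equation*}
    \tilde{\mathcal{L}}_\delta e(\xx) = \tau_\delta(\xx), \qquad \xx\in\Omega,
\end{equation*}
where
\begin{equation*}
    \tilde{\mathcal{L}}_\delta v(\xx) := \frac{1}{\delta^2}\int_\Omega R_\delta(\xx,\yy)(v(\xx)-v(\yy))\,\d\yy + \frac{2}{\delta}v(\xx)\int_{\partial\Omega}\bar{R}_\delta(\xx,\yy)\,\d S_{\yy}
\end{equation*}
is the homogeneous part of the operator (the $g$-dependent contribution cancels in the subtraction), and $\tau_\delta$ is the consistency error of evaluating the nonlocal model on $u$.

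First I would estimate $\tau_\delta$ pointwise. Using the point-integral identity \eqref{eq:pim} together with $g(\yy)=u(\yy)$ on $\partial\Omega$ one can write
\begin{equation*}
    \tau_\delta(\xx) = T_{\mathrm{PIM}}(\xx) + \frac{2}{\delta}\int_{\partial\Omega}\bar{R}_\delta(\xx,\yy)\bigl[\delta\,\partial_{\nn}u(\yy)+u(\xx)-u(\yy)\bigr]\d S_{\yy},
\end{equation*}
where $T_{\mathrm{PIM}}(\xx)$ is the standard PIM residual, of size $O(\delta)$ for $u\in C^4(\bar{\Omega})$. A Taylor expansion of $u$ around $\yy\in\partial\Omega$ combined with $|\xx-\yy|\le 2\delta$ shows the bracketed ``Robin mismatch'' is $O(\delta)$ uniformly in $\yy$, whence
\begin{equation*}
    |\tau_\delta(\xx)| \le C\delta + C\int_{\partial\Omega}\bar{R}_\delta(\xx,\yy)\,\d S_{\yy}, \qquad \xx\in\Omega.
\end{equation*}

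Next I would construct a barrier of the form $\phi(\xx)=A\delta + B\delta\,\psi(\xx)$, where $\psi$ is the classical solution of $-\Delta\psi=1$ in $\Omega$ with $\psi|_{\partial\Omega}=0$. The two components handle the two terms of the bound on $|\tau_\delta|$ separately: the constant piece produces $\tilde{\mathcal{L}}_\delta(A\delta) = 2A\int_{\partial\Omega}\bar{R}_\delta\,\d S$, which dominates the boundary-layer contribution $C\int_{\partial\Omega}\bar{R}_\delta\,\d S$ once $A$ is taken large enough; the piece $B\delta\psi$ is treated by PIM consistency, giving $\tilde{\mathcal{L}}_\delta(B\delta\psi)(\xx)\ge cB\delta$ on $V_{2\delta}$ and thus controlling the $C\delta$ term in the interior. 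Choosing $A,B$ sufficiently large (independent of $\delta$) produces $\tilde{\mathcal{L}}_\delta\phi\ge|\tau_\delta|$ throughout $\Omega$, while $\|\phi\|_{L^\infty(\Omega)}\le (A+B\|\psi\|_{L^\infty})\,\delta = C\delta$. Applying the maximum principle of Section \ref{sec:maximum} to $\phi\pm e$ (both of which satisfy $\tilde{\mathcal{L}}_\delta(\phi\pm e)\ge 0$) gives $|e(\xx)|\le\phi(\xx)\le C\delta$, which is the desired bound.

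The main obstacle is verifying the inequality $\tilde{\mathcal{L}}_\delta\phi\ge|\tau_\delta|$ in the intermediate strip $\Omega_{2\delta}\setminus\Omega_{\delta/\sqrt{2}}$: there Proposition \ref{kernel estimation} neither gives a useful lower bound on $\int_{\partial\Omega}\bar{R}_\delta\,\d S$ nor a simple control on the boundary-flux term $2\int_{\partial\Omega}\bar{R}_\delta\,\partial_{\nn}\psi\,\d S$ that appears in $\tilde{\mathcal{L}}_\delta(B\delta\psi)$. Handling this strip requires tracking the cancellation between that boundary flux and the penalty contribution $\frac{2}{\delta}\psi(\xx)\int_{\partial\Omega}\bar{R}_\delta\,\d S$ (both vanishing like $d(\xx,\partial\Omega)$) and absorbing any remainder into the $A\delta$ component by taking $A$ large; this is the step where the argument is genuinely technical rather than routine.
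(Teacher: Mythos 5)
Your approach is essentially the same as the paper's, and the relationship is worth making explicit: the paper's barrier $v$ solves $-\Delta v = 1$ in $\Omega$ with $v|_{\partial\Omega}=1$, which is exactly $v = 1 + \psi$ for your $\psi$. So the paper's comparison $|e_\delta|\le C\delta\,v = C\delta + C\delta\psi$ is your $\phi = A\delta + B\delta\psi$ with $A=B=C$. The paper just applies the truncation-error lemma (Lemma~\ref{lem:error}) once to $v$ rather than to the two summands separately; because $v\ge 1$ (weak maximum principle), the penalty term $\frac{2v(\xx)}{\delta}\int_{\partial\Omega}\bar{R}_\delta\,\d S_\yy$ already carries a $1/\delta$ factor that absorbs all $O(1)\int_{\partial\Omega}\bar{R}_\delta$ ``bad'' contributions coming from the boundary flux and from $r_v$.

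The one place where your write-up is off is the diagnosed ``main obstacle'' in the intermediate strip $\Omega_{2\delta}\setminus\Omega_{\delta/\sqrt{2}}$: there is no cancellation to track, and no lower bound on $\int_{\partial\Omega}\bar{R}_\delta\,\d S_\yy$ is ever needed. In the barrier inequality the boundary-layer term $C\int_{\partial\Omega}\bar{R}_\delta$ in $|\tau_\delta|$ is absorbed by $\tilde{\mathcal{L}}_\delta(A\delta)=2A\int_{\partial\Omega}\bar{R}_\delta$, regardless of how small $\int_{\partial\Omega}\bar{R}_\delta$ may be (if it is zero, both sides vanish). The interior $C\delta$ term in $|\tau_\delta|$ is controlled everywhere in $\Omega$ by the interior contribution $B\delta\int_\Omega\bar{R}_\delta\,\d\yy \ge B\delta C_1$, which Proposition~\ref{kernel estimation} guarantees uniformly on $\bar{\Omega}$, not just on $V_{2\delta}$. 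The negative flux term $2\int_{\partial\Omega}\bar{R}_\delta\,\partial_\nn\psi\,\d S_\yy$ and the remainder $r_\psi$ are bounded by $C\int_{\partial\Omega}\bar{R}_\delta$ and $C\delta\int_{\partial\Omega}\bar{R}_\delta$ respectively; after multiplying by $B\delta$ they are of size $O(\delta)\int_{\partial\Omega}\bar{R}_\delta$ and hence subsumed into $2A\int_{\partial\Omega}\bar{R}_\delta$ once $A$ exceeds some fixed constant and $\delta$ is small. So the argument is uniform across $\Omega$; there is no special case. (Minor note: the interior PIM residual you call $T_{\mathrm{PIM}}$ is $O(\delta^2)$ for $u\in C^4$, not $O(\delta)$; your weaker estimate still gives the Theorem, but the sharper one is what is needed later for the second-order result.)
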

The convergence in $L^\infty$ is of the first order which is not optimal. By delicately designing weight parameter, a second order nonlocal model can be obtained. 
 
\subsection{Second order nonlocal model}

By changing the constant penalty weight $\delta$ to a specifically designed penalty function $\mu(\xx)$, we can modify the first order nonlocal model to get higher convergence rate. 
\begin{align}
\label{eq:nonlocal-2nd}
	\frac{1}{\delta^2}\int_\Omega R_\delta(\xx,\yy) (u_\delta(\xx)-u_\delta(\yy))\mathd \yy +&\frac{2}{\mu(\xx)}\int_{\partial \Omega} \bar{R}_\delta(\xx,\yy) (u_\delta(\xx)-g(\yy))\mathd S_{\yy}\nonumber \\
	&= \int_\Omega \bar{R}_\delta(\xx,\yy)f(\yy)\mathd \yy,\quad \xx\in \Omega.
\end{align}
with
\begin{align}
    \label{eq:nonlocal-mu}
    \mu(\xx)=\min\{2\delta, \max\{\delta^2,d(\xx)\}\},\quad d(\xx)=\min_{\yy\in \partial \Omega} |\xx-\yy|
\end{align}
The nonlocal model \eqref{eq:nonlocal-2nd} also has equivalent variational form
\begin{align*}
    \min_{u\in L^2(\Omega)}
    &\left\{\frac{1}{4\delta^2}\int_\Omega\int_\Omega R_\delta(\xx,\yy) (u_\delta(\xx)-u_\delta(\yy))^2\mathd \yy\right. \\
	+\int_\Omega&\frac{1}{\mu(\xx)}\int_{\partial \Omega} \bar{R}_\delta(\xx,\yy) (u_\delta(\xx)-g(\yy))^2\mathd S_{\yy}\mathd \xx
    - \left.\int_\Omega \int_\Omega \bar{R}_\delta(\xx,\yy)f(\yy)u_\delta(\xx)\mathd \xx\mathd \yy\right\} .\nonumber
\end{align*}
Based on the above variational form, well-posedness can also be proved in a same way. More importantly, above nonlocal model still preserve the maximum principle and second order convergence can be proved. 
\begin{theorem}
	\label{second order L_infty}
	Let $u$ and $u_\delta$ be the solution of the local model \eqref{eq:poisson} and nonlocal model \eqref{eq:nonlocal-2nd} respectively. Suppose $u\in C^4(\bar{\Omega})$, then there exists $C>0$ independent on $\delta$ such that for any $\xx\in \Omega$
	$$|u(\xx)-u_\delta(\xx)|\le C\delta^2.$$
\end{theorem}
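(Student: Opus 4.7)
The strategy follows the proof of Theorem \ref{one order L_infty}. Let $\mathcal{L}_\mu$ denote the left-hand side of \eqref{eq:nonlocal-2nd} regarded as an operator on $u_\delta$, set $e_\delta = u - u_\delta$, and observe that
\[
\mathcal{L}_\mu e_\delta(\xx) = r_\delta(\xx),\qquad \xx\in\Omega,
\]
where $r_\delta(\xx)$ is the consistency residual obtained by substituting the exact solution $u$ (with $g=u|_{\partial\Omega}$) into \eqref{eq:nonlocal-2nd}. The plan is to build a barrier $\Phi$ with $\mathcal{L}_\mu\Phi(\xx)\ge|r_\delta(\xx)|$ pointwise and $\|\Phi\|_{L^\infty(\Omega)}\le C\delta^2$; the maximum principle of Section \ref{sec:maximum} carries over to \eqref{eq:nonlocal-2nd} verbatim because it uses only positivity of $R_\delta$ and of $1/\mu(\xx)$, and $\delta^2\le\mu(\xx)\le 2\delta$ is bounded. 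Applying it to $\Phi\pm e_\delta$ then yields $|e_\delta(\xx)|\le\Phi(\xx)\le C\delta^2$.

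The consistency estimate is carried out on three zones, determined by the active branch of $\mu(\xx)=\min\{2\delta,\max\{\delta^2,d(\xx)\}\}$. For $d(\xx)>2\delta$, the boundary integrals vanish and a standard fourth-order Taylor expansion of $u(\xx)-u(\yy)$, together with the symmetry of $R_\delta(\xx,\cdot)$ over $B(\xx,2\delta)\subset\Omega$ and the normalization of $\bar R_\delta$, yields $|r_\delta(\xx)|\le C\delta^2\|u\|_{C^4}$. For $\delta^2\le d(\xx)\le 2\delta$ we have $\mu(\xx)=d(\xx)$; writing $\xx=\xx_0-d(\xx)\nn$ with $\xx_0\in\partial\Omega$ the closest point, the point-integral identity \eqref{eq:pim} shows that the leading boundary contribution to $r_\delta(\xx)$ has the form
\[
2\partial_\nn u(\xx_0)\,I(\xx)\,\Bigl(1-\tfrac{d(\xx)}{\mu(\xx)}\Bigr),\qquad I(\xx):=\int_{\partial\Omega}\bar R_\delta(\xx,\yy)\,\mathrm{d}S_\yy,
\]
which vanishes exactly by our choice $\mu(\xx)=d(\xx)$. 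The linear tangential terms in the Taylor expansion of $u(\yy)$ around $\xx_0$ are killed by the symmetry of $\bar R_\delta(\xx,\cdot)$ about $\xx_0$, and the remaining second-order residues combine into a bound of the form $|r_\delta(\xx)|\le C\delta^2\|u\|_{C^4}(1+I(\xx)/d(\xx))$. For $d(\xx)<\delta^2$ we have $\mu(\xx)=\delta^2$, so the penalty prefactor is $O(1/\delta^2)$ and $I(\xx)=O(1/\delta)$; the nonlocal equation effectively forces $u_\delta(\xx)$ to match the kernel-averaged boundary data, while $|u(\xx)-g(\xx_0)|=O(d(\xx))=O(\delta^2)$, giving an analogous pointwise bound on $r_\delta$.

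A suitable barrier is $\Phi(\xx)=C_1\delta^2(M-|\xx|^2/(2n))$ with $M$ large enough that $\Phi>0$ on $\bar\Omega$. Applying $\mathcal{L}_\mu$, the quadratic part contributes $C_1\delta^2$ in the interior (since $-\Delta(-|\xx|^2/(2n))=1$), which dominates the interior residual; in the two boundary zones, the penalty term $2\Phi(\xx)I(\xx)/\mu(\xx)$ is of order $\delta/\mu(\xx)$ and dominates the zone-specific residual after choosing $C_1$ large. The principal obstacle lies in the intermediate zone: the cancellation $1-d(\xx)/\mu(\xx)=0$ only removes the leading $O(1/\delta)$ normal-derivative contribution, and the next-order residues mix the asymmetric first moment of $R_\delta$ (arising from the truncation $\Omega\cap B(\xx,2\delta)$) with surface Hessian moments of $\bar R_\delta$. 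Showing that these combine to $O(\delta^2)$ requires a careful treatment via a $C^3$ normal tubular parameterization of $\partial\Omega$ near $\xx_0$, so that the boundary-curvature corrections integrate to higher order against the near-symmetric kernel and the residual bound required by the barrier is attained.
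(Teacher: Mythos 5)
Your overall strategy—maximum principle plus a barrier function plus a consistency estimate—is exactly the strategy of the paper's proof, so the broad architecture is sound. Your barrier $\Phi(\xx)=C_1\delta^2(M-|\xx|^2/(2n))$ differs from the paper's choice $\delta^2 v$ with $v$ solving the auxiliary problem $-\Delta v=1$, $v|_{\p\Omega}=1$. Both can be made to work, but the paper's is cleaner: $v\geq 1$ is automatic by the local weak maximum principle, $\Delta v=-1$ holds exactly, and applying Lemma \ref{lem:error} to $v$ directly produces a lower bound on $L_{\delta,\mu}v$ of the form $\tfrac{1}{\mu(\xx)}\int_{\p\Omega}\bar R_\delta\,\d S_\yy+\tfrac12\int_\Omega\bar R_\delta\,\d\yy$, which dominates the consistency error term-by-term. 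With a hand-built quadratic barrier you must additionally control $\p_\nn\Phi$ on $\p\Omega$ and pick $M,C_1$ compatibly, which is doable but noisier.

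The substantive gap is in the treatment of the tangential contribution to the boundary residual. You claim that ``the linear tangential terms in the Taylor expansion of $u(\yy)$ around $\xx_0$ are killed by the symmetry of $\bar R_\delta(\xx,\cdot)$ about $\xx_0$,'' and you concede this ``requires a careful treatment via a $C^3$ normal tubular parameterization \ldots\ so that the boundary-curvature corrections integrate to higher order'' without carrying it out. That concession is where the argument stops being a proof: the kernel restricted to $\p\Omega$ is \emph{not} symmetric about $\xx_0$ — the symmetry is broken by the second fundamental form, the surface measure Jacobian, and the tangential projection, all at the same order of expansion — and you would have to track the precise cancellation. The paper avoids this entirely with an exact identity that you did not find: writing the tangential projection of $\xx-\yy$ and using $\nabla_{\Gamma,\yy}\bar{\bar R}_\delta(\xx,\yy)=-\tfrac{1}{2\delta^2}\bar R_\delta(\xx,\yy)\,(\xx-\yy)_{\mathrm{tang}}$, one has
\begin{equation*}
\int_{\p\Omega}\bar R_\delta(\xx,\yy)\,(\xx-\yy)_{\mathrm{tang}}\cdot\nabla_\Gamma u(\yy)\,\d S_\yy
=2\delta^2\int_{\p\Omega}\bar{\bar R}_\delta(\xx,\yy)\,\Delta_\Gamma g(\yy)\,\d S_\yy ,
\end{equation*}
by integration by parts on the closed manifold $\p\Omega$. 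This is exact, requires no approximate symmetry and no curvature bookkeeping, and immediately gives the needed bound $C\delta^2\int_{\p\Omega}\bar R_\delta\,\d S_\yy$. This identity, together with the normal-direction estimate $(\xx-\yy)\cdot\nn(\yy)=-d(\xx)+O(\delta^2)$, is the essential content of the paper's Lemma \ref{lem:error} and the display following \eqref{eq:error-L-0-2nd}; it is the ingredient your argument lacks.

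Finally, a smaller point: the explicit three-zone split you set up is unnecessary. The paper observes that within the support zone $\Omega_{2\delta}$ one always has $|\mu(\xx)-d(\xx)|\leq\delta^2$ by the definition $\mu(\xx)=\max\{\delta^2,d(\xx)\}$, so the normal-derivative cancellation produces a residual of size $\tfrac{C\delta^2}{\mu(\xx)}\int_{\p\Omega}\bar R_\delta\,\d S_\yy+C\delta^2$ uniformly in $\xx$, and a single comparison $|L_{\delta,\mu}e_\delta|\leq C\delta^2 L_{\delta,\mu}v$ closes the proof. Your zone-by-zone bookkeeping reproduces the same estimate but with more moving parts; the leading boundary term does not ``vanish exactly'' in the innermost zone, it is merely bounded by $\delta^2/\mu(\xx)$, and formulating the estimate uniformly through $|\mu-d|\leq\delta^2$ removes the case analysis.
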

In subsequent section, we will give maximum principle and prove Theorem \ref{one order L_infty} and
Theorem \ref{second order L_infty}.

\section{Maximum principle}
\label{sec:maximum}
The nonlocal models (\ref{eq:nonlocal-1st}) and \eqref{eq:nonlocal-2nd} preserve maximum principle, which can be used to prove the convergence in $L^{\infty}$ norm. We first state the maximum principle of the integral operator
\begin{align}
	\label{maximum operator}
    L_{\delta,\mu} v(\xx)=\frac{1}{\delta^2}\int_\Omega R_\delta(\xx,\yy) (v(\xx)-v(\yy))\mathd \yy +&\frac{2v(\xx)}{\mu(\xx)}\int_{\partial \Omega} \bar{R}_\delta(\xx,\yy) \mathd \yy
\end{align}
with $\mu(\xx)>0$ being a given weight function. 
\begin{lemma}
	\label{lem:max}
	For $u,v\in C(\bar{\Omega})$ and 
	$$L_{\delta,\mu}u\ge 0,$$
	then 
	$$u\ge 0.$$
\end{lemma}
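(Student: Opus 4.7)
The plan is to prove the lemma by contradiction in the style of a strong maximum principle, exploiting the fact that $L_{\delta,\mu}$ decomposes as a non-negative averaging operator plus a non-negative boundary penalty. Suppose, for contradiction, that $m := -\min_{\bar\Omega} u > 0$, and pick a minimizer $\xx_0 \in \bar\Omega$ so that $u(\xx_0) = -m < 0$.

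I would first examine $L_{\delta,\mu} u(\xx_0)$ pointwise. Since $u(\xx_0) \le u(\yy)$ everywhere and $R_\delta \ge 0$, the bulk integral is $\le 0$; since $u(\xx_0) < 0$, $\mu(\xx_0) > 0$ and $\bar R_\delta \ge 0$, the boundary term is also $\le 0$. Combined with the assumption $L_{\delta,\mu} u(\xx_0) \ge 0$, both contributions must vanish. Vanishing of the boundary term together with $u(\xx_0) \ne 0$ forces $\int_{\partial\Omega} \bar R_\delta(\xx_0,\yy)\,\d S_\yy = 0$, and by Proposition \ref{kernel estimation} this rules out $\xx_0 \in \Omega_{\sqrt{2}\delta/2}$, in particular $\xx_0 \notin \partial\Omega$. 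Vanishing of the bulk integral, combined with the nondegeneracy $R \ge \gamma_0$ on $[0,1/2]$, forces $u(\yy) = u(\xx_0) = -m$ for every $\yy \in \Omega$ with $|\xx_0-\yy| \le \sqrt{2}\,\delta$; by continuity of $u$, this identity extends to the full ball $\bar\Omega \cap B(\xx_0,\sqrt{2}\delta)$.

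Next I would propagate. Let $S := \{\xx \in \bar\Omega : u(\xx) = -m\}$. This set is non-empty (it contains $\xx_0$), closed in $\bar\Omega$ by continuity, and the previous step shows that every point $\xx \in S$ carries an $\bar\Omega$-open neighborhood $\bar\Omega \cap B(\xx,\sqrt{2}\delta) \subset S$. Hence $S$ is clopen in $\bar\Omega$, and by the connectedness assumption on $\Omega$ (so on $\bar\Omega$) we get $S = \bar\Omega$, i.e., $u \equiv -m$. But $\Omega_{\sqrt{2}\delta/2}$ is non-empty, and at any $\xx^\ast \in \Omega_{\sqrt{2}\delta/2}$ the bulk term of $L_{\delta,\mu} u(\xx^\ast)$ vanishes (since $u$ is constant) while the boundary term equals $\tfrac{-2m}{\mu(\xx^\ast)} \int_{\partial\Omega} \bar R_\delta(\xx^\ast,\yy)\,\d S_\yy \le \tfrac{-2m C_4}{\mu(\xx^\ast)\delta} < 0$, again by Proposition \ref{kernel estimation}. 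This contradicts $L_{\delta,\mu} u(\xx^\ast) \ge 0$ and completes the proof.

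The one step I expect to need some care is the propagation: I must check that the pointwise identity $u \equiv u(\xx_0)$ on a $\sqrt{2}\delta$-ball really upgrades to an open neighborhood of $\xx_0$ in the topology of $\bar\Omega$ (so that $S$ is open and I can invoke connectedness), which is where continuity of $u$ and the extension to boundary points of the ball are used. A minor secondary concern is ensuring $\Omega_{\sqrt{2}\delta/2}$ is non-empty for the final contradiction; this is automatic as $\partial\Omega \ne \emptyset$, and otherwise one may directly use a boundary point $\xx^\ast \in \partial\Omega$, where Proposition \ref{kernel estimation} also furnishes a positive lower bound on $\int_{\partial\Omega}\bar R_\delta(\xx^\ast,\yy)\,\d S_\yy$.
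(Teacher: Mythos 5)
Your proof is correct, and it takes a genuinely different (and in fact more careful) route than the paper's. The paper argues directly at a global minimizer $\xx_0$: the bulk term in $L_{\delta,\mu}u(\xx_0)$ is $\le 0$, so $0\le L_{\delta,\mu}u(\xx_0) \le \tfrac{2u_0}{\mu(\xx_0)}\int_{\partial\Omega}\bar R_\delta(\xx_0,\yy)\,\d S_\yy$, and then divides to conclude $u_0\ge 0$ in a single displayed chain of inequalities. That one-liner tacitly assumes $\int_{\partial\Omega}\bar R_\delta(\xx_0,\yy)\,\d S_\yy>0$, which fails whenever every global minimizer of $u$ lies at distance $\ge 2\delta$ from $\partial\Omega$; the paper does not address that case. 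Your contradiction-plus-propagation argument closes exactly this gap: you observe that vanishing of the bulk term at a negative minimizer forces local constancy on a $\sqrt{2}\delta$-ball via the nondegeneracy assumption, you upgrade this to a clopen set and invoke connectedness to get $u\equiv -m$, and then you derive a contradiction at a point in $\Omega_{\sqrt{2}\delta/2}$ (or directly on $\partial\Omega$) where Proposition \ref{kernel estimation} gives a strict positive lower bound on the boundary integral. The paper's approach is shorter when the denominator happens to be nonzero; yours is complete in all cases and makes explicit the role of connectedness of $\Omega$ and nondegeneracy of $R$, neither of which is visibly used in the paper's displayed proof even though both are genuinely needed.
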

\begin{proof}
	Let $u_0=\dps\min_{\xx\in \bar{\Omega}}u(\xx)$ and $u(\xx_0)=u_0$, $\xx_0\in \bar{\Omega}$. Then for any $\xx\in \bar{\Omega}$,
	$$u(\xx)\ge u_0\ge \frac{\mu(\xx_0)L_{\delta,\mu}u(\xx_0)}{2\int_{\partial \Omega} \bar{R}_\delta(\xx_0,\yy) \mathd \yy} \ge 0 .$$
\end{proof}

One direct corollary of the maximum principle is the comparing principle as following. 
\begin{corollary}
\label{cor:compare}
For any $u,v\in C(\bar{\Omega})$, if 
$$|L_{\delta,\mu}u|\le L_{\delta, \mu}v,$$ 
then
$$|u|\le v.$$
\end{corollary}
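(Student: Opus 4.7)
The plan is to deduce the corollary directly from Lemma \ref{lem:max} by exploiting the linearity of the operator $L_{\delta,\mu}$ defined in \eqref{maximum operator}. Since $L_{\delta,\mu}$ is an integral operator with kernel depending only on $\xx,\yy$ (and not on the argument function), it is clearly linear in its argument.

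First I would unpack the hypothesis $|L_{\delta,\mu}u|\le L_{\delta,\mu}v$ into the two inequalities $-L_{\delta,\mu}v \le L_{\delta,\mu}u \le L_{\delta,\mu}v$. By linearity these are equivalent to
\begin{equation*}
L_{\delta,\mu}(v-u)\ge 0,\qquad L_{\delta,\mu}(v+u)\ge 0.
\end{equation*}
Both $v-u$ and $v+u$ are continuous on $\bar{\Omega}$ since $u,v\in C(\bar{\Omega})$, so Lemma \ref{lem:max} applies and gives $v-u\ge 0$ together with $v+u\ge 0$. Combining these yields $-v\le u\le v$, equivalently $|u|\le v$, which in particular forces $v\ge 0$.

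There is no real obstacle here; the whole argument is a one-line consequence of the linearity of $L_{\delta,\mu}$ plus the preceding lemma. The only thing worth being careful about is to note explicitly that linearity in \eqref{maximum operator} holds because the boundary term $\tfrac{2v(\xx)}{\mu(\xx)}\int_{\partial\Omega}\bar R_\delta(\xx,\yy)\,\mathrm{d}\yy$ is linear in $v(\xx)$ (the weight $\mu$ does not depend on $v$), so that $L_{\delta,\mu}(\alpha u+\beta w)=\alpha L_{\delta,\mu}u+\beta L_{\delta,\mu}w$ for any scalars $\alpha,\beta$.
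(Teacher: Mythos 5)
Your proof is correct and is the only sensible route: the paper states the corollary without proof as a ``direct corollary,'' and the intended argument is exactly the one you give, applying Lemma~\ref{lem:max} to $v-u$ and $v+u$ after invoking the (evident) linearity of $L_{\delta,\mu}$.
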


\subsection{Proof of the first order convergence (Theorem \ref{one order L_infty})}

First, we need following truncation error analysis.
\begin{lemma}
\label{lem:error}
For $u\in C^4(\bar{\Omega})$, 
\begin{align*}
	\int_\Omega \bar{R}_\delta(\xx,\yy)\Delta u(\yy)\mathd \yy&+\frac{1}{\delta^2}\int_\Omega R_\delta(\xx,\yy) (u(\xx)-u(\yy))\mathd \yy \\
&-2\int_{\partial\Omega} \bar{R}_\delta(\xx,\yy) \frac{\p u}{\p{\nn}}(\yy)\mathd S_{\yy} = r_u(\xx)+O(\delta^2),
\end{align*}
where 
\begin{align*}
	r_u(\xx)=&\sum_{i,j=1}^n\int_{\p\Omega} \bar{R}_\delta(\xx,\yy) n_i(\yy)(x_j-y_j)\frac{\partial^2 u(\yy)}{\partial y_i\partial y_j}\mathd S_{\yy}-\frac{2\delta^2}{3}\int_{\partial \Omega} \bar{\bar{R}}_\delta(\xx,\yy) \frac{\partial }{\partial \nn}\Delta u(\yy)\mathd S_{\yy}\\
	&+\frac{1}{3}\sum_{i,j,k=1}^n\int_{\p \Omega} \bar{R}_\delta(\xx,\yy) n_i(\yy)(x_j-y_j)(x_k-y_k)\frac{\partial^3 u(\yy)}{\partial y_i\partial y_j\p y_k}\mathd S_{\yy}\nonumber
\end{align*}
and 
\begin{align*}
	 |r_u(\xx)|\leq  &  C \delta \int_{\p\Omega} \bar{R}_\delta(\xx,\yy)\mathd S_{\yy}.\nonumber
 \end{align*}
\end{lemma}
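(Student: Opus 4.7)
The plan is to Taylor-expand $u(\yy)$ about $\xx$ to cubic order inside $\frac{1}{\delta^2}\int_\Omega R_\delta(\xx,\yy)(u(\xx)-u(\yy))\mathd\yy$ and process each order via the two kernel identities
\[
(x_i-y_i)R_\delta(\xx,\yy)=2\delta^2\,\p_{y_i}\bar R_\delta(\xx,\yy),\qquad (x_i-y_i)\bar R_\delta(\xx,\yy)=2\delta^2\,\p_{y_i}\bar{\bar R}_\delta(\xx,\yy),
\]
which follow directly from $\bar R'=-R$, $\bar{\bar R}'=-\bar R$ and the definition $\tilde R_\delta=\alpha_n\delta^{-n}\tilde R(|\xx-\yy|^2/(4\delta^2))$. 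Each polynomial factor $(x_i-y_i)$ is thereby converted into a $y$-derivative of an auxiliary kernel, and one integration by parts over $\Omega$ moves that derivative onto $u$ and deposits a residue on $\p\Omega$. The three boundary integrals appearing in $r_u(\xx)$ are exactly these residues; all volume integrals against $\bar{\bar R}_\delta$ and the quartic Taylor tail will be folded into the $O(\delta^2)$ remainder.

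Concretely, the linear Taylor contribution, after one application of the first identity and one integration by parts, equals $2\int_{\p\Omega}\bar R_\delta\frac{\p u}{\p\nn}\mathd S_\yy-2\int_\Omega\bar R_\delta\Delta u\,\mathd\yy$. The quadratic contribution, treated the same way, produces the first surface integral of $r_u$, a volume term $+\int_\Omega\bar R_\delta\Delta u\,\mathd\yy$ (which cancels half of the linear volume term), and an interior residual $-\sum_j\int_\Omega\bar R_\delta(x_j-y_j)\p_j\Delta u\,\mathd\yy$; applying the second kernel identity and one further integration by parts converts this residual into $-2\delta^2\int_{\p\Omega}\bar{\bar R}_\delta\frac{\p\Delta u}{\p\nn}\mathd S_\yy$, the leftover $2\delta^2\int_\Omega\bar{\bar R}_\delta\Delta^2 u\,\mathd\yy$ being an $O(\delta^2)$ remainder. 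The cubic contribution is handled analogously: one identity plus one integration by parts yields the third surface integral of $r_u$ together with $\frac{2}{3}\sum_j\int_\Omega\bar R_\delta(x_j-y_j)\p_j\Delta u\,\mathd\yy$, and a final identity-plus-integration-by-parts pass turns this into $\frac{4\delta^2}{3}\int_{\p\Omega}\bar{\bar R}_\delta\frac{\p\Delta u}{\p\nn}\mathd S_\yy$ modulo $O(\delta^2)$. Summing the two $\bar{\bar R}_\delta$ coefficients gives $-2\delta^2+\frac{4\delta^2}{3}=-\frac{2\delta^2}{3}$, matching the coefficient of the middle term of $r_u$ exactly; after rearranging the remaining volume contributions the stated identity falls out.

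For the pointwise bound, the compact support of $R$ forces $|\xx-\yy|\le 2\delta$ on every integrand, so the first and third surface integrals are bounded by $C\delta\|u\|_{C^2}$ and $C\delta^2\|u\|_{C^3}$ respectively times $\int_{\p\Omega}\bar R_\delta\mathd S_\yy$. Since $\bar{\bar R}$ itself satisfies Assumption \ref{assumption}, Proposition \ref{kernel estimation} applied to $\bar{\bar R}$ controls $\int_{\p\Omega}\bar{\bar R}_\delta\mathd S_\yy$ and lets one compare it with $\int_{\p\Omega}\bar R_\delta\mathd S_\yy$ (both quantities vanish simultaneously when $d(\xx,\p\Omega)>2\delta$), so the middle term also fits inside $C\delta\int_{\p\Omega}\bar R_\delta\mathd S_\yy$. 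The main obstacle I anticipate is purely combinatorial bookkeeping: the $\tfrac{1}{2!}$ and $\tfrac{1}{3!}$ Taylor prefactors must thread correctly through two successive integrations by parts so that the two $\frac{\p\Delta u}{\p\nn}$ boundary residues emerging from the quadratic and cubic orders combine with exactly the coefficient $-\frac{2\delta^2}{3}$, and one must verify that every quartic Taylor tail and every volume integral of the form $\delta^2\int_\Omega\bar{\bar R}_\delta\Delta^2 u\,\mathd\yy$ is uniformly $O(\delta^2)$ in $\xx$.
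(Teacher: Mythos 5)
Your plan reproduces the paper's own proof essentially verbatim: Taylor-expand to cubic order (you wrote ``expand $u(\yy)$ about $\xx$'', but your subsequent computations --- in which $\Delta u$, $u_{ij}$, $u_{ijk}$ are evaluated at $\yy$ and get hit by the integration by parts --- make clear you mean expanding $u(\xx)$ about the base point $\yy$, so that the derivatives live at the integration variable), convert each polynomial factor $(x_i-y_i)$ into a $y$-derivative of $\bar R_\delta$ or $\bar{\bar R}_\delta$, integrate by parts once per order, and track the residues. Your coefficient arithmetic is exactly right: the linear order gives $2\int_{\p\Omega}\bar R_\delta\,\p_\nn u - 2\int_\Omega\bar R_\delta\Delta u$; the quadratic order cancels half of the $-2\int_\Omega\bar R_\delta\Delta u$ and deposits the $\sum_{ij}$ surface integral plus a $-2\delta^2\int_{\p\Omega}\bar{\bar R}_\delta\,\p_\nn\Delta u$ residue; the cubic order deposits the $\frac13\sum_{ijk}$ surface integral and a $+\frac{4\delta^2}{3}\int_{\p\Omega}\bar{\bar R}_\delta\,\p_\nn\Delta u$ residue; the two $\bar{\bar R}_\delta$ residues sum to $-\frac{2\delta^2}{3}$, matching $r_u$. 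The bookkeeping concern you flag at the end is not in fact a problem --- you already did it correctly.

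The one genuine gap is in your justification of $|r_u|\le C\delta\int_{\p\Omega}\bar R_\delta\,\mathd S_\yy$. You invoke Proposition~\ref{kernel estimation} applied to $\bar{\bar R}$, plus the observation that $\int_{\p\Omega}\bar{\bar R}_\delta\,\mathd S_\yy$ and $\int_{\p\Omega}\bar R_\delta\,\mathd S_\yy$ vanish simultaneously. That is not enough: Proposition~\ref{kernel estimation} gives the lower bound $\int_{\p\Omega}\bar R_\delta\,\mathd S_\yy\gtrsim\delta^{-1}$ only on $\Omega_{\frac{\sqrt2}{2}\delta}$, whereas both integrals are supported on $\Omega_{2\delta}$, and in the transitional band $\frac{\sqrt2}{2}\delta< d(\xx,\p\Omega)<2\delta$ the proposition leaves the ratio $\int_{\p\Omega}\bar{\bar R}_\delta/\int_{\p\Omega}\bar R_\delta$ uncontrolled. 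The clean fix (and what the paper actually uses) is the pointwise inequality
\[
\bar{\bar R}(r)=\int_r^1\bar R(s)\,\mathd s\le\bar R(r)(1-r)\le\bar R(r),
\]
which holds because $\bar R$ is nonincreasing on $[0,1]$; it gives $\bar{\bar R}_\delta(\xx,\yy)\le\bar R_\delta(\xx,\yy)$ everywhere, hence $\delta^2\int_{\p\Omega}\bar{\bar R}_\delta\,\mathd S_\yy\le\delta^2\int_{\p\Omega}\bar R_\delta\,\mathd S_\yy\le C\delta\int_{\p\Omega}\bar R_\delta\,\mathd S_\yy$, completing the bound. With that substitution your argument is a correct reconstruction of the paper's proof.
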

The proof of this lemma is mainly based on divergence theorem and Taylor expansion. We put the details in Appendix \ref{Appendix lem:error}.

Now we can prove Theorem \ref{one order L_infty} by take $\mu(\xx)=\delta$ in (\ref{maximum operator}). Let $e_\delta(\xx)=u(\xx)-u_\delta(\xx)$,
\begin{align}
\label{eq:error-L-0-1st}
&L_{\delta}e_\delta(\xx)\nonumber\\
=&\dfrac{1}{\delta^2}\into \Rd (u(\xx)-u(\yy))\d \yy+\dfrac{2}{\delta}\intpo \bRd u(\xx)\d S_\yy \nonumber\\
&\quad\quad\quad -\into \bRd f(\yy)\d\yy-\dfrac{2}{\delta}\intpo\bRd g(\yy)\d S_\yy \nonumber\\ 
=&\dfrac{1}{\delta^2}\into \Rd (u(\xx)-u(\yy))\d \yy-2\intpo \bRd \dfrac{\partial u}{\partial \nn}(\yy)\d S_\yy+\into \bRd \Delta u(\yy)\d\yy\nonumber\\
&\quad\quad\quad +\dfrac{2}{\delta}\intpo\bRd\left(u(\xx)-u(\yy)+\delta\dfrac{\partial u}{\partial \nn}(\yy)\right)\d S_\yy\nonumber\\
=&\frac{2}{\delta}\int_{\p\Omega} \bar{R}_\delta(\xx,\yy)\left(u(\xx)-u(\yy)+\delta\frac{\p u}{\p \nn}(\yy)\right)\mathd S_{\yy}+r_u(\xx)+O(\delta^2).
\end{align}
Using using Taylor expansion, the first term of \eqref{eq:error-L-0-1st} becomes
\begin{align*}
	&\dfrac{2}{\delta}\left|\int_{\p\Omega}\bar{R}_\delta(\xx,\yy)\left(u(\xx)-u(\yy)+\delta\dfrac{\partial u}{\partial \nn}\right)\mathd S_{\yy}\right|\\
	\le &\dfrac{2}{\delta}\left|\int_{\p\Omega}\bar{R}_\delta(\xx,\yy)\left((\xx-\yy)\cdot \nabla u(\yy)\right)\mathd S_{\yy}\right|+C\delta\int_{\p\Omega}\bar{R}_\delta(\xx,\yy)\mathd \yy+C\int_{\p\Omega}\bar{R}_\delta(\xx,\yy)\mathd \yy\\
	\le & C \int_{\p\Omega}\bar{R}_\delta(\xx,\yy)\mathd \yy.
\end{align*}
Then using the estimation of $r_u(\xx)$ in Lemma \ref{lem:error}, we get there exist $C>0$ independent on $\delta$, such that
\begin{align}
    \label{eq:error-Le-1st}
    |L_\delta e_\delta(\xx)|\le& C\int_{\p\Omega} \bar{R}_\delta(\xx,\yy)\mathd S_{\yy}+C\delta^2.
\end{align}
On the other hand, let $v$ solves following Poisson equation 
\begin{align}
    \label{eq:possion-v}
    \left\{
    \begin{array}{rl}
    -\Delta v(\xx)=1,&\quad \xx\in \Omega,\\
    v(\xx)=1,&\quad \xx\in \p\Omega.
    \end{array}\right.
\end{align}
Because of the regularity of right-hand-side functions in (\ref{eq:possion-v}) and the smoothness of $\partial\Omega$, we know the solution with regularity $v\in C^4(\bar{\Omega})$ exists. 
Additionally, weak maximum principle of Poisson equation implies $v(\xx)\geq 1,\forall\xx\in \bar{\Omega}$.

Now, following Lemma \ref{lem:error}, we can obtain
\begin{align}
    \label{eq:est-Lv}
    L_\delta v(\xx)=&\frac{2v(\xx)}{\delta}\int_{\p\Omega} \bar{R}_\delta(\xx,\yy)\mathd S_{\yy}+\int_{\Omega} \bar{R}_\delta(\xx,\yy)\mathd {\yy}\nonumber\\
&\hspace{3cm} +2\int_{\p\Omega} \bar{R}_\delta(\xx,\yy)\frac{\p v}{\p \nn}(\yy)\mathd S_{\yy}+r_v(\xx)+O(\delta^2)\nonumber\\
    \ge& \frac{1}{\delta}\int_{\p\Omega} \bar{R}_\delta(\xx,\yy)\mathd S_{\yy}+\frac{1}{2}\int_{\Omega} \bar{R}_\delta(\xx,\yy)\mathd {\yy}
\end{align}

Comparing \eqref{eq:error-Le-1st} and \eqref{eq:est-Lv}, we have
\begin{align*}
    |L_\delta e_\delta|\le C\delta L_\delta v.
\end{align*}
Then maximum principle gives that
\begin{align*}
    |u(\xx)-u_\delta(\xx)|\le C \delta v(\xx)\leq C\delta.
\end{align*}

\subsection{Proof of the second order convergence (Theorem \ref{second order L_infty})}

To prove the second order convergence, we need more delicate analysis of the local truncation error. 
Notice that when $\yy\in\partial\Omega$, we can only consider $\xx\in \Omega_{2\delta}$, thus 
\begin{equation}
\label{mu}
\mu(\xx)=
\left\{
\begin{aligned}
&d(\xx), &d(\xx)>\delta^2,\\
&\delta^2,&d(\xx)\leq \delta^2.
\end{aligned}
\right.
\end{equation}
Taking the $\mu(\xx)$ in (\ref{maximum operator}) to above form, we can get 
\begin{align}
    \label{eq:error-L-0-2nd}
    &L_{\delta,\mu} e_\delta(\xx)=\frac{2}{\mu(\xx)}\int_{\p\Omega} \bar{R}_\delta(\xx,\yy)\left(u(\xx)-u(\yy)+\mu(\xx)\frac{\p u}{\p \nn}(\yy)\right)\mathd S_{\yy}\nonumber\\
	&\hspace{8cm}+r_u(\xx)+O(\delta^2).
\end{align}
Also using Taylor expansion, the first term of \eqref{eq:error-L-0-2nd} becomes
\begin{align*}
 &\left|\int_{\p\Omega}\bar{R}_\delta(\xx,\yy)\left(u(\xx)-u(\yy)+\mu(\xx)\frac{\p u}{\p \nn}(\yy)\right)\mathd S_{\yy}\right|\\
 \le&\left|\int_{\p\Omega}\bar{R}_\delta(\xx,\yy)\left((\xx-\yy)\cdot \nabla u(\yy)+\mu(\xx)\frac{\p u}{\p \nn}(\yy)\right)\mathd S_{\yy}\right|+C\delta^2\int_{\p\Omega}\bar{R}_\delta(\xx,\yy)\mathd \yy
\end{align*}
Then, we decompose $\xx-\yy$ to normal and tangential direction.
\begin{align*}
    &\int_{\p\Omega}\bar{R}_\delta(\xx,\yy)\left((\xx-\yy)\cdot \nabla u(\yy)\right)\mathd S_{\yy}\\
    =&\int_{\p\Omega}\bar{R}_\delta(\xx,\yy)\left[(\xx-\yy)-\left((\xx-\yy)\cdot \nn(\yy)\right)\nn(\yy)\right]\cdot \nabla u(\yy)\mathd S_{\yy}\\
&\qquad +\int_{\p\Omega}\bar{R}_\delta(\xx,\yy)\left((\xx-\yy)\cdot \nn(\yy)\right)\frac{\p u}{\p \nn}(\yy)\mathd S_{\yy}
\end{align*}
Notice that
\begin{align*}
    &\left|\int_{\p\Omega}\bar{R}_\delta(\xx,\yy)\left[(\xx-\yy)-\left((\xx-\yy)\cdot \nn(\yy)\right)\nn(\yy)\right]\cdot \nabla u(\yy)\mathd S_{\yy}\right|\\
    =&\left|2\delta^2 \int_{\p\Omega}\nabla_\Gamma \bar{\bar{R}}_\delta(\xx,\yy)\cdot \nabla_\Gamma u(\yy)\mathd S_{\yy}\right|\\
    =&\left|-2\delta^2 \int_{\p\Omega} \bar{\bar{R}}_\delta(\xx,\yy)\cdot \Delta_\Gamma g(\yy)\mathd S_{\yy}\right|\\
	\leq&C\delta^2\int_{\partial\Omega}\bar{\bar{R}}_\delta(\xx,\yy)\d\yy\\
	\leq&C\delta^2\int_{\partial\Omega}\bar{R}_\delta(\xx,\yy)\d\yy,
\end{align*}
where $$\nabla_\Gamma = \left(I - \nn\nn^T\right)\nabla$$
is the gradient on $\p\Omega$ and $\Delta_\Gamma=\nabla_\Gamma\cdot \nabla_\Gamma $ is the Laplace-Beltrami operator on $\p\Omega$. In the last inequality, we use the fact 
\[\bar{\bar{R}}(r)=\int_r^1\bar{R}(s)\d s\leq \bar{R}(r)(1-r)\leq \bar{R}(r).\] 

The component along the normal direction is
\begin{align*}
    &\int_{\p\Omega}\bar{R}_\delta(\xx,\yy)\left((\xx-\yy)\cdot \nn(\yy)\right)\frac{\p u}{\p \nn}(\yy)\mathd S_{\yy}\\
    =&\, \int_{\p\Omega}\bar{R}_\delta(\xx,\yy)\left((\xx-\bz)\cdot \nn(\bz)+O(|\yy-\bz|^2)\right)\frac{\p u}{\p \nn}(\yy)\mathd S_{\yy}\\
    =&\, -d(\xx)\int_{\p\Omega}\bar{R}_\delta(\xx,\yy) \frac{\p u}{\p \nn}(\yy)\mathd S_{\yy}+O(\delta^2)\int_{\p\Omega}\bar{R}_\delta(\xx,\yy)\mathd S_{\yy}
\end{align*}
where
$$\bz=\mathop{\mathrm{argmin}}_{\yy\in \p\Omega}|\xx-\yy|,$$ such that $(\xx-\bz)=-|\xx-\bz|\cdot  \nn(\bz)$ and $(\xx-\bz)\cdot \nn(\bz)=-d(\xx)$.
From analysis above, using Lemma \ref{lem:error} and notice the value of $\mu(\xx)$ in (\ref{mu}), we have there exist $C>0$ independent on $\delta$, such that
\begin{align}
\label{eq:error-Le-2nd}
|L_{\delta,\mu} e_\delta(\xx)|\le& \frac{C\left(\mu(\xx)-d(\xx)\right)}{\mu(\xx)}\int_{\p\Omega} \bar{R}_\delta(\xx,\yy)\mathd S_{\yy}+C\delta \int_{\p\Omega} \bar{R}_\delta(\xx,\yy)\mathd S_{\yy} +C\delta^2\nonumber\\
\le &\frac{C\delta^2}{\mu(\xx)}\int_{\p\Omega} \bar{R}_\delta(\xx,\yy)\mathd S_{\yy}+C\delta \int_{\p\Omega} \bar{R}_\delta(\xx,\yy)\mathd S_{\yy} +C\delta^2\nonumber\\
\le &\frac{C\delta^2}{\mu(\xx)}\int_{\p\Omega} \bar{R}_\delta(\xx,\yy)\mathd S_{\yy} +C\delta^2
\end{align}
With $v$ solves \eqref{eq:possion-v}, we have
\begin{align}
    \label{eq:est-Lv-2nd}
    L_{\delta,\mu} v(\xx)=&\frac{2v(\xx)}{\mu(\xx)}\int_{\p\Omega} \bar{R}_\delta(\xx,\yy)\mathd S_{\yy}+\int_{\Omega} \bar{R}_\delta(\xx,\yy)\mathd {\yy}+2\int_{\p\Omega} \bar{R}_\delta(\xx,\yy)\frac{\p v}{\p \nn}(\yy)\mathd S_{\yy}\nonumber\\
	&\hspace{5cm}+r_v(\xx)+O(\delta^2)\nonumber\\
    \ge& \frac{1}{\mu(\xx)}\int_{\p\Omega} \bar{R}_\delta(\xx,\yy)\mathd S_{\yy}+\frac{1}{2}\int_{\Omega} \bar{R}_\delta(\xx,\yy)\mathd {\yy}
\end{align}
Comparing \eqref{eq:error-Le-2nd} and \eqref{eq:est-Lv-2nd}, we have
\begin{align*}
    |L_{\delta,\mu} e_\delta|\le C\delta^2 L_{\delta,\mu} v.
\end{align*}
Then maximum principle gives that
\begin{align*}
    |u(\xx)-u_\delta(\xx)|\le C \delta^2 v(\xx)\leq C\delta^2.
\end{align*}

\section{Proof of well-posedness (Theorem \ref{wellpose1})}
\label{section:Well-posedness}

\subsection{Existence and uniqueness}
\label{existence and uniqueness}
The first part of Theorem \ref{wellpose1}, i.e. the existence and uniqueness will be proved by Lax-Milgram theorem.

Consider the bilinear form $B: L^2(\Omega)\times L^2(\Omega)\rightarrow \RR$
\[
\begin{aligned}
&B[p,q]\\
&=\dfrac{1}{\delta^2}\int_{\Omega}q(\xx)\int_{\Omega}R_\delta(\xx,\yy)(p(\xx)-p(\yy))\d \yy\d \xx+\dfrac{2}{\delta}\int_\Omega q(\xx)\int_{\partial\Omega}\bar{R}_{\delta}(\xx,\yy)p(\xx)\d S_{\yy}\d \xx
\end{aligned}
\]
and denote linear operator $F: L^2(\Omega)\rightarrow \RR$ as 
\[\langle F,p\rangle=\int_\Omega p(\xx)\int_{\Omega}\bar{R}_{\delta}(\xx,\yy)f(\yy)\d \yy\d\xx+\dfrac{2}{\delta}\into p(\xx)\intpo\bRd g(\yy)\d S_\yy\d\xx.\]

To utilize the Lax-Milgram theorem, we need the continuity and coercivity of $B[\cdot,\cdot]$ and
the boundedness of $\langle F, \cdot\rangle$.
\begin{proposition}
	\label{continuity}
	For any $p,q\in L^2(\Omega)$, there exists a constant $C>0$ independent of $\delta$ such that
	\[B[p,q]\leq \dfrac{C}{\delta^2}\|p\|_{L^{2}(\Omega)}\|q\|_{L^{2}(\Omega)}.\]
\end{proposition}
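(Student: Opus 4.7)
The plan is to split $B[p,q]$ into three pieces according to its structural parts and bound each one separately using the kernel estimates from Proposition \ref{kernel estimation}. Write
\begin{align*}
B[p,q] =\; & \frac{1}{\delta^2}\int_\Omega q(\xx)p(\xx)\left(\int_\Omega R_\delta(\xx,\yy)\d\yy\right)\d\xx \\
 & -\frac{1}{\delta^2}\int_\Omega \int_\Omega R_\delta(\xx,\yy)\,q(\xx)p(\yy)\d\yy\d\xx \\
 & +\frac{2}{\delta}\int_\Omega q(\xx)p(\xx)\left(\int_{\partial\Omega}\bar{R}_\delta(\xx,\yy)\d S_\yy\right)\d\xx,
\end{align*}
call these $I_1$, $I_2$, $I_3$.

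For $I_1$, Proposition \ref{kernel estimation} gives $\int_\Omega R_\delta(\xx,\yy)\d\yy \leq C_2$ uniformly in $\xx$, so one Cauchy–Schwarz in $\xx$ yields $|I_1|\leq (C/\delta^2)\|p\|_{L^2(\Omega)}\|q\|_{L^2(\Omega)}$. For $I_3$, the other estimate in Proposition \ref{kernel estimation} gives $\int_{\partial\Omega}\bar{R}_\delta(\xx,\yy)\d S_\yy \leq C_3/\delta$, so $|I_3|\leq (C/\delta^2)\|p\|_{L^2(\Omega)}\|q\|_{L^2(\Omega)}$ by Cauchy–Schwarz, with the two factors of $1/\delta$ combining into $1/\delta^2$.

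The only step needing a bit of care is $I_2$. Here I would factor the kernel symmetrically as $R_\delta(\xx,\yy)=R_\delta(\xx,\yy)^{1/2}\cdot R_\delta(\xx,\yy)^{1/2}$ and apply Cauchy–Schwarz on the product measure:
\[
|I_2|\leq \frac{1}{\delta^2}\left(\int_\Omega\int_\Omega R_\delta(\xx,\yy)\,q(\xx)^2\d\yy\d\xx\right)^{1/2}\left(\int_\Omega\int_\Omega R_\delta(\xx,\yy)\,p(\yy)^2\d\yy\d\xx\right)^{1/2}.
\]
Using Fubini together with the symmetry $R_\delta(\xx,\yy)=R_\delta(\yy,\xx)$ and the upper bound from Proposition \ref{kernel estimation} in either variable, each factor is bounded by $C\|q\|_{L^2(\Omega)}$ and $C\|p\|_{L^2(\Omega)}$ respectively. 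Adding the three bounds produces the desired $|B[p,q]|\leq (C/\delta^2)\|p\|_{L^2(\Omega)}\|q\|_{L^2(\Omega)}$.

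There is essentially no obstacle here: the proposition is a routine continuity check in which the $1/\delta^2$ factor on the right comes directly from the $1/\delta^2$ prefactor in the volume term and from the product of the $1/\delta$ prefactor with the $1/\delta$ boundary-layer blow-up of $\int_{\partial\Omega}\bar{R}_\delta$. The only mildly subtle point is that the bound must not pick up any extra $\delta$-dependence, which is why $I_2$ is handled by a symmetric Cauchy–Schwarz rather than by first pulling out $\|R_\delta\|_{L^\infty}\sim \delta^{-n}$.
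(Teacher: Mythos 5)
Your proof is correct and follows essentially the same route as the paper: split $B[p,q]$ into the diagonal volume term, the off-diagonal convolution term, and the boundary penalty term, then bound each via Proposition \ref{kernel estimation} and Cauchy--Schwarz. Your treatment of $I_2$ with a single symmetric Cauchy--Schwarz on the product measure (writing $R_\delta = R_\delta^{1/2}R_\delta^{1/2}$) is a minor and slightly cleaner variant of the paper's two sequential applications of Cauchy--Schwarz followed by Fubini; the estimates used are identical.
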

\begin{proof}
	The bilinear form
	\[\begin{aligned}
		B[p,q]=&\dfrac{1}{\delta^2}\int_{\Omega}q(\xx)\int_{\Omega}R_\delta(\xx,\yy)p(\xx)d \yy\d \xx
		-\dfrac{1}{\delta^2}\int_{\Omega}q(\xx)\int_{\Omega}R_\delta(\xx,\yy)p(\yy)\d \yy\d \xx\\
		&+\dfrac{2}{\delta}\int_\Omega q(\xx)\int_{\partial\Omega}\bar{R}_{\delta}(\xx,\yy)p(\xx)\d S_{\yy}\d \xx
	\end{aligned}\]
	The first and third term can be estimated by directly using the properties in Proposition \ref{kernel estimation}.
	\[
	\begin{aligned}
	\left|\int_{\Omega}q(\xx)\int_{\Omega}R_\delta(\xx,\yy)p(\xx)d \yy\d \xx\right|&\leq \int_{\Omega}|p(\xx)||q(\xx)|\left(\int_{\Omega}R_\delta(\xx,\yy)d \yy\right)\d \xx\\
		&\leq C\int_{\Omega}|p(\xx)||q(\xx)|\d \xx\\
		&\leq C\|p\|_{L^{2}(\Omega)}\|q\|_{L^{2}(\Omega)}.
	\end{aligned}
	\]
	and
	\[
	\begin{aligned}
		\left|\int_\Omega q(\xx)\int_{\partial\Omega}\bar{R}_{\delta}(\xx,\yy)p(\xx)\d S_{\yy}\d \xx\right|&\leq \int_{\Omega}|p(\xx)||q(\xx)|\left(\int_{\partial\Omega}\bar{R}_{\delta}(\xx,\yy)\d S_\yy\right)\d\xx\\
		&\leq \dfrac{C}{\delta}\int_{\Omega}|p(\xx)||q(\xx)|\d \xx\\
		&\leq \dfrac{C}{\delta}\|p\|_{L^{2}(\Omega)}\|q\|_{L^{2}(\Omega)}.
	\end{aligned}
	\]
	The second term can be bounded by using Cauchy-Schwarz inequality twice 
	\[
	\begin{aligned}
		&\left|\int_{\Omega}q(\xx)\int_{\Omega}R_\delta(\xx,\yy)p(\yy)\d \yy\d \xx\right|\\
		\leq& \int_{\Omega}|q(\xx)|\left(\int_{\Omega}R_\delta(\xx,\yy)|p(\yy)|\d \yy\right)\d \xx\\
		\leq& \left[\int_{\Omega}q^2(\xx)\d\xx\right]^{\frac{1}{2}}\left[\int_{\Omega}\left(\int_{\Omega}R_\delta(\xx,\yy)|p(\yy)|d \yy\right)^2\d\xx\right]^{\frac{1}{2}}\\
		\leq& \|q\|_{L^2(\Omega)}\left[\int_{\Omega}\left(\int_{\Omega}R_{\delta}(\xx,\yy)\d\yy\right)\left(\int_{\Omega}R_\delta(\xx,\yy)p^2(\yy)d \yy\right)\d\xx\right]^{\frac{1}{2}}\\
		\leq& C\|q\|_{L^2(\Omega)}\left[\int_{\Omega}\left(\int_{\Omega}R_\delta(\xx,\yy)p^2(\yy)d \yy\right)\d\xx\right]^{\frac{1}{2}}\\
		=&C\|q\|_{L^2(\Omega)}\left[\int_{\Omega}\left(\int_{\Omega}R_\delta(\xx,\yy)\d\xx\right) p^2(\yy)d \yy\right]^{\frac{1}{2}}\\
		\leq&C\|p\|_{L^2(\Omega)}\|q\|_{L^2(\Omega)}.
	\end{aligned}
	\]

Combine these three estimations, we have proved 
\[B[p,q]\leq \dfrac{C}{\delta^2}\|p\|_{L^{2}(\Omega)}\|q\|_{L^{2}(\Omega)}.\]
That is the continuity of $B[\cdot,\cdot]$.
\end{proof}

\begin{proposition}
	\label{coercivity}
	For any $p\in L^2(\Omega)$, there exists a constant $C>0$ independent of $\delta$ such that 
	\[B[p,p]\geq C\|p\|_{L^2(\Omega)}^2.\]
\end{proposition}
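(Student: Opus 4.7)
The approach is to exploit the symmetry of $R_\delta$ to rewrite $B[p,p]$ as a sum of two nonnegative quadratic forms, and then combine a nonlocal Poincar\'e--Friedrichs inequality with the boundary-kernel lower bound from Proposition \ref{kernel estimation}. Interchanging $\xx$ and $\yy$ in the first integral of $B[p,p]$ and using $R_\delta(\xx,\yy)=R_\delta(\yy,\xx)$ yields
\begin{equation*}
B[p,p]=\frac{1}{2\delta^2}\int_{\Omega}\int_{\Omega}R_\delta(\xx,\yy)(p(\xx)-p(\yy))^2\,\d\yy\,\d\xx+\frac{2}{\delta}\int_{\Omega}p^2(\xx)\int_{\partial\Omega}\bar{R}_\delta(\xx,\yy)\,\d S_{\yy}\,\d\xx,
\end{equation*}
so $B[p,p]$ splits into a nonlocal Dirichlet energy $E_1(p)$ plus a boundary penalty $E_2(p)$, each nonnegative. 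The third inequality in Proposition \ref{kernel estimation} immediately gives $\int_{\partial\Omega}\bar{R}_\delta(\xx,\yy)\,\d S_\yy\ge C_4/\delta$ on the strip $\Omega_{\sqrt{2}\delta/2}$, so $E_2(p)\ge (2C_4/\delta^2)\int_{\Omega_{\sqrt{2}\delta/2}}p^2\,\d\xx$, meaning the boundary penalty alone already controls $\|p\|^2_{L^2}$ on that strip.

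Next I would invoke a nonlocal Poincar\'e--Friedrichs inequality of the form
\begin{equation*}
\|p\|_{L^2(\Omega)}^2\le C_1\,E_1(p)+C_2\,E_2(p),
\end{equation*}
with $C_1,C_2$ independent of $\delta$, which combined with the above immediately yields the desired coercivity $\|p\|_{L^2(\Omega)}^2\le C\,B[p,p]$. To prove this inequality I would decompose $p=(p-\bar p)+\bar p$, where $\bar p$ is the mean of $p$ over $\Omega_{\sqrt{2}\delta/2}$. The oscillation $\|p-\bar p\|_{L^2(\Omega)}^2$ is handled by the classical nonlocal Poincar\'e estimate $\|p-\bar p\|_{L^2(\Omega)}^2\lesssim \delta^{-2}\int_\Omega\int_\Omega R_\delta(\xx,\yy)(p(\xx)-p(\yy))^2\,\d\yy\,\d\xx$, while the constant part is controlled by $|\bar p|^2\le |\Omega_{\sqrt{2}\delta/2}|^{-1}\int_{\Omega_{\sqrt{2}\delta/2}}p^2$ together with the geometric estimate $|\Omega_{\sqrt{2}\delta/2}|\gtrsim \delta$.

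The main obstacle is keeping all constants $\delta$-uniform in the Poincar\'e--Friedrichs step: averaging over the thin strip $\Omega_{\sqrt{2}\delta/2}$ can naively produce unfavorable factors of $1/\delta$, and one must track how the $\delta$-width of the strip, the $1/\delta$ factor in the boundary-kernel lower bound, and the normalization of $R_\delta$ from Assumption \ref{assumption} conspire so that the final coercivity constant does not degenerate as $\delta\to 0$. Once this $\delta$-balancing is resolved, the remainder of the proof reduces to routine bookkeeping with the estimates from Proposition \ref{kernel estimation}.
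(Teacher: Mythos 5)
Your decomposition of $B[p,p]$ into the nonlocal Dirichlet energy $E_1(p)$ plus the boundary penalty $E_2(p)$ is exactly the paper's starting point, and your use of Proposition~\ref{kernel estimation} to get a lower bound on $E_2$ over the strip $\Omega_{\sqrt{2}\delta/2}$ is sound. The problem is the Poincar\'e--Friedrichs step, which you rightly flag as ``the main obstacle'' but then dismiss as routine: it is precisely here that your sketch does not close.

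Concretely, you want $\|p-\bar p_s\|^2_{L^2(\Omega)}\lesssim E_1(p)$, where $\bar p_s$ is the mean over the thin strip, and you call this a ``classical nonlocal Poincar\'e estimate.'' The classical nonlocal Poincar\'e inequality controls the oscillation about $\bar p_\Omega$, the mean over all of $\Omega$. Converting to the strip-mean costs
\begin{equation*}
|\bar p_\Omega-\bar p_s|^2\le\frac{1}{|\Omega_{\sqrt{2}\delta/2}|}\int_{\Omega_{\sqrt{2}\delta/2}}(p-\bar p_\Omega)^2\lesssim\frac{1}{\delta}\,\|p-\bar p_\Omega\|^2_{L^2(\Omega)}\lesssim\frac{1}{\delta}\,E_1(p),
\end{equation*}
so the best one gets this way is $\|p-\bar p_s\|^2_{L^2(\Omega)}\lesssim \delta^{-1}E_1(p)$, and the $\delta^{-1}$ does \emph{not} cancel against anything. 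You already spent the $\delta$ of slack from $E_2$ on controlling the constant part $|\Omega|\,\bar p_s^2\lesssim\delta E_2(p)$, and there is no second helping of it for the oscillation term. Unless one can show the oscillation of $p$ over the $O(\delta)$-thin strip is itself of size $O(\delta)\cdot E_1(p)$ --- which is a trace-type statement that is not elementary and is not what the ``classical'' nonlocal Poincar\'e inequality says --- the constant in your final estimate degenerates as $\delta\to0$.

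The paper avoids this precisely by not comparing to a constant. It introduces the smoothed function $\hat p(\xx)=\bar w_\delta(\xx)^{-1}\int_\Omega\bar R_\delta(\xx,\yy)p(\yy)\,\d\yy$ and uses three facts: $E_1(p)\gtrsim\|\nabla\hat p\|^2_{L^2(\Omega)}$ and $E_1(p)\gtrsim\|p-\hat p\|^2_{L^2(\Omega)}$ (both quoted from \cite{wang2023nonlocal}), and $E_2(p)\gtrsim\|\hat p\|^2_{L^2(\partial\Omega)}$, which follows directly from Cauchy--Schwarz and Proposition~\ref{kernel estimation} with room to spare. Feeding these into the \emph{continuous} Poincar\'e inequality with boundary, $\|\nabla\hat p\|^2_{L^2(\Omega)}+\|\hat p\|^2_{L^2(\partial\Omega)}\gtrsim\|\hat p\|^2_{L^2(\Omega)}$, and then $\|p\|^2\le 2\|p-\hat p\|^2+2\|\hat p\|^2$, gives the coercivity with constants manifestly independent of $\delta$. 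The moral is that $\hat p$ has a genuine $H^1$ gradient and a genuine $L^2$ boundary trace both controlled by $B[p,p]$, so the $\delta$-uniform local Poincar\'e inequality can be invoked off the shelf; your strip-mean construction, by contrast, has no such trace and forces a lossy comparison across a $\delta$-thin region. If you want to keep an elementary argument you would need to first prove a nonlocal trace or boundary-strip Poincar\'e inequality with $\delta$-uniform constants, which is essentially the content of the cited lemmas rather than a piece of routine bookkeeping.
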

\begin{proof}
	we first simplify $B[p,p]$ as 
	\[\begin{aligned}
		B[p,p]&=\dfrac{1}{\delta^2}\int_{\Omega}\int_{\Omega}R_\delta(\xx,\yy)(p^2(\xx)-p(\xx)p(\yy))\d \yy\d \xx+\dfrac{2}{\delta}\int_\Omega p^2(\xx)\int_{\partial\Omega}\bar{R}_{\delta}(\xx,\yy)\d S_{\yy}\d \xx\\
		&=\dfrac{1}{2\delta^2}\int_{\Omega}\int_{\Omega}R_\delta(\xx,\yy)(p(\xx)-p(\yy))^2\d\xx\d\yy+\dfrac{2}{\delta}\int_\Omega p^2(\xx)\int_{\partial\Omega}\bar{R}_{\delta}(\xx,\yy)\d S_{\yy}\d \xx.
	\end{aligned}\]

	Define 
	\[\hat{p}(\xx)=\dfrac{1}{\bar{w}_{\delta}(\xx)}\int_{\Omega}\bar{R}_{\delta}(\xx,\yy)p(\yy)\d\yy,\]
	where 
	\[\bar{w}_\delta(\xx)=\int_{\Omega}\bar{R}_{\delta}(\xx,\yy)\d\yy.\]
	
	In \cite{wang2023nonlocal}, the first term of $B[p,p]$ has been proved that 
	\[\dfrac{1}{2\delta^2}\int_{\Omega}\int_{\Omega}R_\delta(\xx,\yy)(p(\xx)-p(\yy))^2\d\xx\d\yy\geq C\|\nabla \hat{p}(\xx)\|_{L^2(\Omega)}^2.\]
	and 
	\[\|p(\xx)-\hat{p}(\xx)\|_{L^2(\Omega)}^2\leq \dfrac{C}{\delta^2}\int_{\Omega}\int_{\Omega}R_\delta(\xx,\yy)(p(\xx)-p(\yy))^2\d\xx\d\yy.\]
	Here we prove the second term of $B[p,p]$  can control $\|\hat{p}\|^2_{L^2(\partial\Omega)}$.
	\[
	\begin{aligned}
		\|\hat{p}\|^2_{L^2(\partial\Omega)}&=\int_{\partial\Omega}\dfrac{1}{\bar{w}_{\delta}^2(\xx)}\left(\int_{\Omega}\bar{R}_{\delta}(\xx,\yy)p(\yy)\d\yy\right)^2\d S_\xx\\
		&\leq \int_{\partial\Omega}\dfrac{1}{\bar{w}_{\delta}^2(\xx)}\left(\int_{\Omega}\bar{R}_{\delta}(\xx,\yy)\d \yy\right)\left(\int_{\Omega}\bar{R}_{\delta}(\xx,\yy)p^2(\yy)\d \yy\right)\d S_\xx\\
		&=\int_{\partial\Omega}\dfrac{1}{\bar{w}_{\delta}(\xx)}\left(\int_{\Omega}\bar{R}_{\delta}(\xx,\yy)p^2(\yy)\d \yy\right)\d S_\xx\\
		&\leq C\int_{\partial\Omega}\int_{\Omega}\bar{R}_{\delta}(\xx,\yy)p^2(\yy)\d \yy\d S_\xx\\
		&\leq \dfrac{C}{\delta}\int_{\partial\Omega}\int_{\Omega}\bar{R}_{\delta}(\xx,\yy)p^2(\yy)\d \yy\d S_\xx\\
		&= \dfrac{C}{\delta}\int_{\Omega}p^2(\yy)\int_{\partial\Omega}\bar{R}_{\delta}(\xx,\yy)\d S_\xx\d \yy\\
		&=\dfrac{C}{\delta} \int_{\Omega}p^2(\xx)\int_{\partial\Omega}\bar{R}_{\delta}(\xx,\yy)\d S_\yy\d \xx.
	\end{aligned}
	\]
	That is the estimation of the second term
	\[\dfrac{2}{\delta}\int_\Omega p^2(\xx)\int_{\partial\Omega}\bar{R}_{\delta}(\xx,\yy)\d S_{\yy}\d \xx\geq C\|\hat{p}\|_{L^2(\partial\Omega)}^2.\]
	Moreover, the $\mathrm{Poincar\acute{e}}$  inequality with boundary \cite{maz2013sobolev} gives us 
	\[\|\nabla\hat{p}(\xx)\|_{L^2(\Omega)}^2+\|\hat{p}(\xx)\|^2_{L^2(\partial\Omega)}\geq C\|\hat{p}(\xx)\|_{L^2(\Omega)}^2.\]

	Combine these results, we get 
	\[\begin{aligned}
	B[p,p]&=\dfrac{1}{2\delta^2}\int_{\Omega}\int_{\Omega}R_\delta(\xx,\yy)(p(\xx)-p(\yy))^2\d\xx\d\yy+\dfrac{2}{\delta}\int_\Omega p^2(\xx)\int_{\partial\Omega}\bar{R}_{\delta}(\xx,\yy)\d S_{\yy}\d \xx\\
	&\geq \dfrac{C}{\delta^2}\int_{\Omega}\int_{\Omega}R_\delta(\xx,\yy)(p(\xx)-p(\yy))^2\d\xx\d\yy+C\|\nabla\hat{p}(\xx)\|_{L^2(\Omega)}^2+C\|\hat{p}(\xx)\|^2_{L^2(\partial\Omega)}\\
	&\geq C\|p(\xx)-\hat{p}(\xx)\|_{L^2(\Omega)}^2+C\|\hat{p}(\xx)\|_{L^2(\Omega)}^2\\
	&\geq C\|p\|_{L^2(\Omega)}^2.
	\end{aligned}\]
\end{proof}
\begin{proposition}
	For any $p\in L^2(\Omega)$, there exists a constant $C>0$ independent of $\delta$ such that
	\[\langle F,p\rangle\leq C\left(\|f\|_{L^2(\Omega)}+\dfrac{C}{\delta^{3/2}}\norm{g}_{L^2(\partial\Omega)}\right)\|p\|_{L^2(\Omega)}.\]
\end{proposition}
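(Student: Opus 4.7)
The plan is to split $\langle F,p\rangle$ into its two summands, estimate each by applying Cauchy--Schwarz twice (first in the inner integration variable, using a kernel-splitting trick $\bar R_\delta = \bar R_\delta^{1/2}\cdot\bar R_\delta^{1/2}$, and then in the outer variable $\xx$), and finally invoke Fubini together with the kernel bounds from Proposition \ref{kernel estimation}.

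For the volume term $\int_\Omega p(\xx)\int_\Omega \bar R_\delta(\xx,\yy) f(\yy)\,\d\yy\,\d\xx$, I would follow exactly the double Cauchy--Schwarz argument already used to bound the second (nonlocal) term of $B[p,q]$ in the proof of Proposition \ref{continuity}. Writing $\bar R_\delta(\xx,\yy)|f(\yy)| = \bar R_\delta^{1/2}\cdot \bar R_\delta^{1/2}|f(\yy)|$ and applying Cauchy--Schwarz in $\yy$ gives a factor $(\int_\Omega \bar R_\delta\,\d\yy)^{1/2}\le C$ (Proposition \ref{kernel estimation}). Cauchy--Schwarz in $\xx$ and Fubini, together with the analogous bound $\int_\Omega \bar R_\delta(\xx,\yy)\,\d\xx\le C$ for $\yy\in\Omega$, yield the bound $C\|p\|_{L^2(\Omega)}\|f\|_{L^2(\Omega)}$.

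For the boundary term $\tfrac{2}{\delta}\int_\Omega p(\xx)\int_{\partial\Omega}\bar R_\delta(\xx,\yy) g(\yy)\,\d S_\yy\,\d\xx$, the same strategy applies but now the inner integral is over $\partial\Omega$, which is where the extra $\delta^{-1/2}$ appears. Cauchy--Schwarz on the inner integral gives
\[
\int_{\partial\Omega}\bar R_\delta(\xx,\yy)g(\yy)\,\d S_\yy \;\le\;\Bigl(\int_{\partial\Omega}\bar R_\delta\,\d S_\yy\Bigr)^{1/2}\Bigl(\int_{\partial\Omega}\bar R_\delta g^2\,\d S_\yy\Bigr)^{1/2}\le \frac{C}{\sqrt\delta}\Bigl(\int_{\partial\Omega}\bar R_\delta g^2\,\d S_\yy\Bigr)^{1/2},
\]
using the second estimate in Proposition \ref{kernel estimation}. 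Another Cauchy--Schwarz in $\xx$, Fubini, and the bound $\int_\Omega \bar R_\delta(\xx,\yy)\,\d\xx\le C$ for $\yy\in\partial\Omega$ then produce $\tfrac{C}{\delta^{3/2}}\|p\|_{L^2(\Omega)}\|g\|_{L^2(\partial\Omega)}$, where the prefactor $2/\delta$ combines with the $1/\sqrt\delta$ from the kernel estimate to give $\delta^{-3/2}$.

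Summing the two bounds yields the claimed estimate. There is no real obstacle here: the argument is a routine double Cauchy--Schwarz, and the only point that needs a small justification is that the bound $\int_\Omega \bar R_\delta(\xx,\yy)\,\d\xx\le C$ holds uniformly for $\yy\in\partial\Omega$, which follows from the same change-of-variables argument that underlies the first inequality of Proposition \ref{kernel estimation}.
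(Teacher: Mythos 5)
Your proposal is correct and follows essentially the same route as the paper: a double Cauchy--Schwarz (splitting $\bar R_\delta = \bar R_\delta^{1/2}\cdot\bar R_\delta^{1/2}$), Fubini, and the kernel bounds of Proposition \ref{kernel estimation}, with the $\delta^{-1/2}$ from $\int_{\partial\Omega}\bar R_\delta\,\d S_\yy\le C/\delta$ combining with the $2/\delta$ prefactor to give $\delta^{-3/2}$. The small point you flag---that $\int_\Omega \bar R_\delta(\xx,\yy)\,\d\xx\le C$ uniformly for $\yy\in\partial\Omega$---is indeed the symmetric counterpart of the first estimate in Proposition \ref{kernel estimation}, so no gap remains.
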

\begin{proof}
	 $\langle F,p\rangle$ can be bounded by 
	\[
	\begin{aligned}
		&\left|\int_{\Omega}p(\xx)\int_{\Omega}R_\delta(\xx,\yy)f(\yy)d \yy\d \xx\right|\\
		\leq& \int_{\Omega}|p(\xx)|\left(\int_{\Omega}R_\delta(\xx,\yy)|f(\yy)|d \yy\right)\d \xx\\
		\leq& \left[\int_{\Omega}p^2(\xx)\d\xx\right]^{\frac{1}{2}}\left[\int_{\Omega}\left(\int_{\Omega}R_\delta(\xx,\yy)|f(\yy)|d \yy\right)^2\d\xx\right]^{\frac{1}{2}}\\
		=&C\|p\|_{L^2(\Omega)}\left[\int_{\Omega}\left(\int_{\Omega}R_\delta(\xx,\yy)\d\xx\right) f^2(\yy)d \yy\right]^{\frac{1}{2}}\\
		=&C\|p\|_{L^2(\Omega)}\|f\|_{L^2(\Omega)}.
	\end{aligned}	
	\]
	and 
	\[
	\begin{aligned}
	&\dfrac{2}{\delta}\left|\int_{\Omega}p(\xx)\intpo \bRd g(\yy)\d S_\yy\d\xx\right|\\
	\leq& \dfrac{2}{\delta}\|p\|_{L^2(\Omega)}\left[\into\left(\intpo\bRd |g(\yy)|\d S_\yy\right)^2\d \xx\right]^{\frac{1}{2}}\\
	\leq& \dfrac{2}{\delta}\|p\|_{L^2(\Omega)}\left[\left(\intpo\bRd\d\xx\right)\left(\intpo g^2(\yy)\into\bRd \d\xx\d S_\yy\right)\right]^{\frac{1}{2}}\\
	\leq& \dfrac{C}{\delta^{3/2}}\|g\|_{L^2(\partial\Omega)}\|p\|_{L^2(\Omega)}.
	\end{aligned}
	\]
\end{proof}

Now we have verified the three conditions of Lax-Milgram theorem, which gives the existence and uniqueness of the solution to (\ref{nonlocal}).
\subsection{$H^1$ estimation}
\label{H1wellpose}
In this subsection, we will further illustrate $u_\delta\in H^1(\Omega)$, and $\norm{u_\delta}_{H^1(\Omega)}$ can be 
bounded by $\norm{f}_{L^2(\Omega)}$ and $\norm{g}_{L^2(\partial\Omega)}$.

In fact, (\ref{nonlocal}) implies
\begin{equation}
	\label{closed form}
	u_{\delta}(\xx)=\dfrac{\delta^2\int_{\Omega}\bar{R}_{\delta}(\xx,\yy)f(\yy)\d\yy+\int_{\Omega}R_{\delta}(\xx,\yy)u_{\delta}(\yy)\d\yy+2\delta\intpo\bRd g(\yy)\d S_\yy}{\int_{\Omega}R_{\delta}(\xx,\yy)\d\yy+2\delta\int_{\partial\Omega}\bar{R}_{\delta}(\xx,\yy)\d S_\yy}.
\end{equation}
Because of the kernel assumption $R\in C^1([0,\infty))$, we also have $\bar{R}\in C^1([0,\infty))$. Thus from (\ref{closed form}) we know $\nabla u_{\delta}$ exists.

Before we start to estimate the $H^1$ norm of $u_{\delta}$, some notations should be introduced. 
We denote 
\[
\begin{aligned}
	&w_\delta(\xx)=\int_{\Omega}R_{\delta}(\xx,\yy)\d\yy,\quad &s_\delta(\xx)=\int_{\partial\Omega}\bar{R}_\delta(\xx,\yy)\d S_\yy,\\
	&\phi_\delta(\xx)=w_{\delta}(\xx)+2\delta s_{\delta}(\xx),\quad &\hat{f}(\xx)=\int_\Omega\bRd f(\yy)\d\yy.
\end{aligned}
\]
From Proposition \ref{kernel estimation}, there exists a positive constant $C$,
\begin{equation*}
	\phi_{\delta}(\xx)>w_{\delta}(\xx)>C.
\end{equation*}
Then $u_{\delta}(\xx)$ can be expressed as 
\begin{equation*}
\begin{aligned}
u_{\delta}(\xx)&=\dfrac{\delta^2}{\phi_\delta(\xx)}\hat{f}(\xx)+\dfrac{1}{\phi_\delta(\xx)}\into \Rd u_\delta(\yy)\d\yy+\dfrac{2\delta}{\phi_\delta(\xx)}\intpo \bRd g(\yy)\d S_\yy \\
&\overset{d}{=}I_1(\xx)+I_2(\xx)+I_3(\xx).
\end{aligned}
\end{equation*}
Now we consider the gradient of $I_1(\xx)$ and $I_3(\xx)$,
\begin{equation*}
\begin{aligned}
\nabla I_1(\xx)&=\dfrac{\delta^2}{\phi_\delta(\xx)}\nabla\hat{f}(\xx)-\dfrac{\delta^2}{\phi_\delta^2(\xx)}\nabla \phi_\delta(\xx)\hat{f}(\xx)\\
&=\dfrac{\delta^2}{\phi_\delta(\xx)}\nabla \hat{f}(\xx)-\dfrac{\delta^2\nabla{w_\delta(\xx)}}{\phi_\delta^2(\xx)}\hat{f}(\xx)-\dfrac{2\delta^3\nabla{s_\delta(\xx)}}{\phi_\delta^2(\xx)}\hat{f}(\xx)\\
&\overset{d}{=}I_{1,1}(\xx)-I_{1,2}(\xx)-I_{1,3}(\xx).
\end{aligned}
\end{equation*}
where 
\begin{align}
\label{grad_I11}
\norm{I_{1,1}(\xx)}_{L^2(\Omega)}^2&=\delta^4\int_\Omega\dfrac{1}{\phi^2_\delta(\xx)}\left|\nabla\hat{f}(\xx)\right|^2\d\xx\nonumber\\
&\leq C\delta^4\int_\Omega\left|\int_\Omega R _{\delta}(\xx,\yy)\dfrac{\xx-\yy}{2\delta^2}f(\yy)\d\yy\right|^2\d\xx\nonumber\\
&\leq C\delta^2\into \left(\into R_{\delta}(\xx,\yy)|f(\yy)|\d\yy \right)^2\d\xx\nonumber\\
&\leq C\delta^2\into\left(\into \Rd\d\yy\right)\left(\into\Rd f^2(\yy)\d\yy\right)\d\xx\nonumber\\
&\leq C\delta^2\into f^2(\yy)\into \Rd\d\xx\d\yy \nonumber\\
&\leq C\delta^2\norm{f}_{L^2(\Omega)}^2,
\end{align}
\begin{align}
\label{grad_I12}
\norm{I_{1,2}(\xx)}_{L^2(\Omega)}^2&=\delta^4\into\dfrac{1}{\phi_\delta^4(\xx)}|\nabla w_\delta(\xx)|^2\left|\hat{f}(\xx)\right|^2\d\xx\nonumber\\
&\leq C\delta^4\into \left(\into R'_\delta(\xx,\yy)\dfrac{|\xx-\yy|}{2\delta^2}\d\yy\right)^2\left|\hat{f}(\xx)\right|^2\d\xx\nonumber\\
&\leq C\delta^2\into \left(\into R'_\delta(\xx,\yy)\d\yy\right)^2\left|\hat{f}(\xx)\right|^2\d\xx\nonumber\\
&\leq C\delta^2\into\left(\into \bRd |f(\yy)|\d\yy\right)^2\d\xx\nonumber\\
&\leq C\delta^2\norm{f}_{L^2(\Omega)}^2,
\end{align}

and 
\begin{align}
\label{grad_I13}
\norm{I_{1,3}(\xx)}_{L^2(\Omega)}^2&=4\delta^6\into\dfrac{1}{\phi_\delta^4(\xx)}|\nabla s_\delta(\xx)|^2\left|\hat{f}(\xx)\right|^2\d\xx\nonumber\\
&\leq C\delta^6 \into \left(\intpo \Rd\dfrac{|\xx-\yy|}{2\delta^2}\d\yy\right)^2\left|\hat{f}(\xx)\right|^2\d\xx\nonumber\\
&\leq C\delta^4 \into \left(\intpo \Rd\d\yy\right)^2\left|\hat{f}(\xx)\right|^2\d\xx\nonumber\\
&\leq C\delta^2 \into\left(\into \bRd |f(\yy)|\d\yy\right)^2\d\xx\nonumber\\
&\leq C\delta^2\norm{f}_{L^2(\Omega)}^2.
\end{align}
Thus we get 
\begin{equation}
	\label{grad estimation1}
	\norm{\nabla I_1(\xx)}_{L^2(\Omega)}^2\leq C\delta^2\norm{f}_{L^2(\Omega)}^2.
\end{equation}
The method to estimate $\|\nabla I_3(\xx)\|$ is same as above, here we directly give the result 
\begin{equation}
	\label{grad estimation3}
	\norm{\nabla I_3(\xx)}_{L^2(\Omega)}^2\leq \dfrac{C}{\delta }\norm{g}_{L^2(\partial\Omega)}^2.
\end{equation}

\begin{remark}
\label{replace}
In the calculation of (\ref{grad_I11})(\ref{grad_I12})(\ref{grad_I13}), we have in fact proved 
\begin{equation}
\norm{\nabla I_1(\xx)}\leq C\delta^2\norm{\hat{f}}_{L^2(\Omega)}+C\delta^4\norm{\nabla \hat{f}}_{L^2(\Omega)},
\end{equation}
which is useful in the convergence analysis.
\end{remark}

Meanwhile the estimation of $\nabla I_2(\xx)$ is much more complicated.
\begin{equation*}
	\begin{aligned}
	\nabla I_{2}(\xx)&=\dfrac{w_{\delta}(\xx)\nabla\tilde{u}_\delta(\xx)-\nabla w_{\delta}(\xx)\tilde{u}_\delta(\xx)}{\phi_\delta^2(\xx)}+2\delta \dfrac{s_{\delta}(\xx)\nabla\tilde{u}_\delta(\xx)}{\phi_\delta^2(\xx)}-2\delta \dfrac{\nabla s_{\delta}(\xx)\tilde{u}_\delta(\xx)}{\phi_\delta^2(\xx)}\\
	&=I_{2,1}(\xx)+I_{2,2}(\xx)-I_{2,3}(\xx).
	\end{aligned}
\end{equation*}
where
\begin{equation*}
	\tilde{u}_{\delta}(\xx)=\into \Rd u_\delta(\yy)\d\yy.
\end{equation*}
For the first term,
\begin{equation*}
\begin{aligned}
\norm{I_{2,1}(\xx)}_{L^2(\Omega)}^2&=\into \dfrac{1}{\phi_\delta^4(\xx)}|w_{\delta}(\xx)\nabla\tilde{u}_\delta(\xx)-\nabla w_{\delta}(\xx)\tilde{u}_\delta(\xx)|^2\d\xx\\
&\leq \into \dfrac{1}{w_\delta^4(\xx)}|w_{\delta}(\xx)\nabla\tilde{u}_\delta(\xx)-\nabla w_{\delta}(\xx)\tilde{u}_\delta(\xx)|^2\d\xx\\
&=\left\|\nabla \left(\dfrac{1}{w_\delta(\xx)}\into \Rd u_\delta(\yy)\d\yy \right)\right\|_{L^2(\Omega)}^2\\
&\leq \dfrac{C}{2\delta^2}\into\into \Rd (u_{\delta}(\xx)-u_{\delta}(\yy))^2\d\xx\d\yy.
\end{aligned}
\end{equation*}
Here the last inequality is a familiar result, which can be found in \cite{NonlocalStokes} and \cite{shi2017convergence}. 

To estimate the second and the third term, we need two lemmas.
\begin{lemma}
	\label{yz kernel}
	Let 
	\[K_{\delta}(\yy,\zz)=\into \tilde{R}_{\delta}(\xx,\zz)|\nabla_\xx\hat{R}_{\delta}(\xx,\yy)|\d\xx \]
	For any $\yy,\zz\in \RR^n$,there exists $C>0$ independent on $\delta$ such that
	\[K_{\delta}(\yy,\zz)\leq\dfrac{C}{\delta} C_\delta \bar{R}\left(\dfrac{|\yy-\zz|^2}{32\delta^2}\right).\]
	where both $\tilde{R}$ and $\hat{R}$ refer to $R$, $\bar{R}$ or $\bar{\bar{R}}$ and $C_\delta$ is the normalization factor.
\end{lemma}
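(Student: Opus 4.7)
My plan is to treat $K_\delta(\yy,\zz)$ as a convolution-type integral of two rescaled kernels and dominate it pointwise by yet another rescaled kernel, exploiting the parallelogram identity together with the monotonicity of $\bar{R}$. First, I would compute the gradient directly:
\[\nabla_\xx \hat{R}_\delta(\xx,\yy) \,=\, \frac{C_\delta}{2\delta^2}\,\hat{R}'\!\left(\frac{|\xx-\yy|^2}{4\delta^2}\right)(\xx-\yy).\]
For each admissible choice $\hat{R}\in\{R,\bar{R},\bar{\bar{R}}\}$, the derivative $\hat{R}'$ lies in $\{R',-R,-\bar{R}\}$, all of which are bounded and compactly supported in $[0,1]$ by Assumption~\ref{assumption}. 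Combined with $|\xx-\yy|\le 2\delta$ on the support, this yields
\[|\nabla_\xx \hat{R}_\delta(\xx,\yy)| \,\le\, \frac{C}{\delta}\,C_\delta\, G\!\left(\frac{|\xx-\yy|^2}{4\delta^2}\right)\]
for a bounded, compactly supported profile $G$. The lemma then reduces to the convolution-type estimate
\[\int_\Omega \tilde{R}_\delta(\xx,\zz)\,C_\delta\,G\!\left(\frac{|\xx-\yy|^2}{4\delta^2}\right)d\xx \,\le\, C\,C_\delta\, \bar{R}\!\left(\frac{|\yy-\zz|^2}{32\delta^2}\right).\]

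The integrand vanishes outside $B(\yy,2\delta)\cap B(\zz,2\delta)$, so $K_\delta$ automatically vanishes whenever $|\yy-\zz|>4\delta$, matching the support $\{|\yy-\zz|^2/(32\delta^2)\le 1/2\}$ of the dominating expression. The crucial geometric input is the parallelogram identity
\[|\xx-\yy|^2+|\xx-\zz|^2 \,=\, 2\bigl|\xx-\tfrac{\yy+\zz}{2}\bigr|^2+\tfrac{1}{2}|\yy-\zz|^2 \,\ge\, \tfrac{1}{2}|\yy-\zz|^2,\]
which forces $\max(|\xx-\yy|^2,|\xx-\zz|^2)\ge|\yy-\zz|^2/4$ at every $\xx$ in the integration domain. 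Splitting $\Omega$ into $\Omega_1=\{|\xx-\zz|^2\ge|\yy-\zz|^2/4\}$ and its complement $\Omega_2$, on each subregion one of the two kernel arguments is bounded below by $|\yy-\zz|^2/(16\delta^2)$. Monotonicity of $\bar{R}$ then lets me pull the factor $\bar{R}(|\yy-\zz|^2/(32\delta^2))$ outside the integral, after which the residual integral of a rescaled kernel against $C_\delta$ is uniformly bounded by Proposition~\ref{kernel estimation}.

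The main obstacle is that the profile $R$ is not assumed to be monotone, so on $\Omega_1$ the naive bound $\tilde{R}(|\xx-\zz|^2/(4\delta^2))\le \tilde{R}(|\yy-\zz|^2/(16\delta^2))$ does \emph{not} follow from monotonicity when $\tilde{R}=R$. My remedy is to first dominate $R(r)\le C\,\bar{R}(r/2)$ uniformly on $[0,1]$, using the nondegeneracy $R\ge\gamma_0$ on $[0,1/2]$ to keep $\bar{R}$ bounded away from zero at the compressed argument; the same dilation step applies to the non-monotone profile $G$. This dilation by $2$ is exactly what turns the naive denominator $16$ arising from the parallelogram bound into the $32$ appearing in the statement. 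Once this dominating step is in place, the remaining case analysis on $\Omega_2$ is symmetric and routine.
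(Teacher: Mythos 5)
Your argument is correct, but it takes a genuinely different route from the paper's. The paper's proof is shorter and more brute-force: after disposing of the trivial case $|\yy-\zz|\ge 4\delta$, it bounds the whole integrand pointwise by $M C_\delta^2/(2\delta)$ with $M=\max_{[0,1]}\tilde R\,|\hat R'|$, observes that the integration region lies inside $B\bigl(\tfrac{\yy+\zz}{2},4\delta\bigr)$ of measure $\sim\delta^n$, and thereby bounds $K_\delta$ by a constant times $C_\delta/\delta$; it then compares this flat bound against the right-hand side by using nondegeneracy to bound $\bar R\bigl(|\yy-\zz|^2/(32\delta^2)\bigr)$ from below, since that argument is $<1/2$ whenever $|\yy-\zz|<4\delta$. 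In other words, the paper never tries to reproduce the decay of the right-hand side; it only matches constants. You instead go after a genuine pointwise domination, splitting via the parallelogram identity and majorizing the non-monotone profiles by a dilated $\bar R$. Your route is more structured and explains the $32$ as $16\times 2$ (parallelogram factor $4$ squared into the $4\delta^2$, then a dilation by $2$), whereas in the paper $32$ comes purely from the support threshold $(4\delta)^2/(32\delta^2)=1/2$. Your approach pays a price in length but would be more robust if one wanted a bound with a faster-decaying right-hand side; the paper's approach is cheaper because the statement only asks for $\bar R$ evaluated strictly below $1/2$.

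Two small remarks. First, the support of $\bar R\bigl(|\yy-\zz|^2/(32\delta^2)\bigr)$ is $\{|\yy-\zz|^2/(32\delta^2)\le 1\}$, not $\le 1/2$ as you wrote; nothing breaks, since the inequality is trivial wherever $K_\delta=0$, but the phrasing about the supports ``matching'' is slightly off. Second, the domination $R(r)\le C\,\bar R(r/2)$ does follow from the hypotheses, but the reason is a bit more delicate than ``nondegeneracy keeps $\bar R$ away from zero'': one needs to observe that nondegeneracy at $r=1/2$ together with continuity forces $\bar R(1/2)=\int_{1/2}^1 R>0$, after which $\bar R(r/2)\ge\bar R(1/2)>0$ for all $r\in[0,1]$ and the bound is immediate with $C=\|R\|_\infty/\bar R(1/2)$. (The paper faces the same point implicitly when it writes $\gamma_0\le\bar R\bigl(|\yy-\zz|^2/(32\delta^2)\bigr)$.) With that said, your plan goes through.
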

and 
\begin{lemma}
\label{corelemma}
There exists a constant $C>0$ independent of $\delta$, for any $u\in L^2(\Omega)$,
\begin{equation*}
\begin{aligned}
\int_{\Omega}u^2(\xx)\intpo C_\delta \bar{R}\left(\dfrac{|\xx-\yy|^2}{32\delta^2}\right)\d S_\yy\d\xx\leq &C \int_{\Omega}u^2(\xx)\intpo \bar{R}_{\delta}(\xx,\yy)\d S_\yy\d\xx\\
&+\dfrac{C}{\delta}\int_{\Omega}\int_{\Omega}R_{\delta}(\xx,\yy)(u(\xx)-u(\yy))^2\d\xx\d\yy.
\end{aligned}
\end{equation*}
\end{lemma}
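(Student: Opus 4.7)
The strategy is to rewrite $u^2(\xx)$ along an intermediate chain of points moving from $\xx\in\Omega$ toward $\yy\in\partial\Omega$, using the elementary inequality $(a+b)^2\le 2a^2+2b^2$ at each link of the chain. The endpoint of the chain produces the narrow-kernel boundary integral (the first term on the RHS), while the fluctuations along the chain produce the Dirichlet energy (the second term on the RHS).

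The central pointwise estimate to be established is
\begin{equation*}
C_\delta\,\bar R\!\left(\frac{|\xx-\yy|^2}{32\delta^2}\right)\le C\int_\Omega\!\int_\Omega R_\delta(\xx,\zz_1)\,R_\delta(\zz_1,\zz_2)\,\bar R_\delta(\zz_2,\yy)\,\mathd\zz_1\mathd\zz_2
\end{equation*}
for every $\xx\in\Omega$ and $\yy\in\partial\Omega$. Two intermediate points are required: the wide kernel is supported up to $|\xx-\yy|\le 4\sqrt{2}\,\delta\approx 5.66\,\delta$, whereas a single convolution $R_\delta\!*\!\bar R_\delta$ is supported only in $|\xx-\yy|\le 4\delta$, leaving an uncovered gap. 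A two-step chain extends the effective support to $6\delta$, which suffices. For each admissible $(\xx,\yy)$, I would localize $\zz_1,\zz_2$ in balls of radius $\sim\delta$ around $\xx+\tfrac13(\yy-\xx)$ and $\xx+\tfrac23(\yy-\xx)$, adjusted inward along the normal to $\partial\Omega$ (using smoothness of $\partial\Omega$) to remain in $\Omega$, so that each of the three scaled arguments $|\xx-\zz_1|^2/(4\delta^2)$, $|\zz_1-\zz_2|^2/(4\delta^2)$, $|\zz_2-\yy|^2/(4\delta^2)$ stays $\le\tfrac12$. The nondegeneracy assumption $R(r)\ge\gamma_0$ for $r\le\tfrac12$ (and consequently $\bar R(\tfrac12)>0$) then gives a uniform lower bound on the integrand on a set of volume $\sim\delta^{2n}$, which is enough to dominate $C_\delta\,\bar R(|\xx-\yy|^2/(32\delta^2))$.

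With this kernel comparison in hand, I would multiply the split
\begin{equation*}
u^2(\xx)\le 3(u(\xx)-u(\zz_1))^2+3(u(\zz_1)-u(\zz_2))^2+3\,u^2(\zz_2)
\end{equation*}
by $R_\delta(\xx,\zz_1)R_\delta(\zz_1,\zz_2)\bar R_\delta(\zz_2,\yy)$, integrate over $\zz_1,\zz_2\in\Omega$, and then integrate in $\xx\in\Omega$ and $\yy\in\partial\Omega$. By Fubini and the estimates of Proposition~\ref{kernel estimation} ($\int_\Omega R_\delta(\cdot,\zz)\mathd\zz\le C$ and $\int_{\partial\Omega}\bar R_\delta(\zz,\yy)\mathd S_\yy\le C/\delta$), the $u^2(\zz_2)$ contribution yields exactly $C\int_\Omega u^2(\xx)\int_{\partial\Omega}\bar R_\delta(\xx,\yy)\mathd S_\yy\mathd\xx$ after relabeling $\zz_2\to\xx$, while each squared-difference contribution is bounded by $\frac{C}{\delta}\iint_{\Omega\times\Omega}R_\delta(\xx,\yy)(u(\xx)-u(\yy))^2\mathd\xx\mathd\yy$ once the "spectator" kernels are integrated out.

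The main obstacle is the pointwise kernel comparison: one must construct $(\zz_1,\zz_2)$ explicitly and verify that the chosen balls have volume bounded below by $c\delta^n$ and lie inside $\Omega$ uniformly in $(\xx,\yy)$. The smoothness of $\partial\Omega$ ensures that an inward shift of order $\delta$ fits a fixed fraction of each ball inside $\Omega$, with constants depending only on the geometry of $\partial\Omega$. Everything else reduces to routine Fubini manipulations and the kernel estimates of Proposition~\ref{kernel estimation}.
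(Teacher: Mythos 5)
Your strategy --- deriving a pointwise comparison between the dilated boundary kernel and an iterated convolution $R_\delta*R_\delta*\bar R_\delta$, then unrolling $u^2$ along the corresponding chain --- is a genuinely different route from the paper's. The paper works at the integral level: it partitions $\Omega_{4\sqrt{2}\delta}$ into annular layers of thickness $\tfrac{\sqrt{2}}{2}\delta$, and at each of seven steps transfers $\int u^2$ from the outer layer to the adjacent inner one via $u^2(\xx)\le C(u(\xx)-u(\yy))^2+Cu^2(\yy)$ together with a ball-intersection lower bound on $\int_{U_1}R_\delta(\xx,\cdot)$; after the last peel the residual mass sits in $\Omega_{\frac{\sqrt{2}}{2}\delta}$, where Proposition~\ref{kernel estimation} dominates it by the boundary integral. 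Your chain approach compresses this iteration into a single pointwise inequality, which would be a cleaner argument once the kernel comparison is established.

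However, the two-intermediate-point chain you propose is too short. The nondegeneracy assumption gives a positive lower bound on $R$, and hence on $\bar R$, only for arguments $r\le\tfrac12$, i.e.\ for distances at most $\sqrt{2}\,\delta$; on $(\tfrac12,1)$ the kernel may take arbitrarily small positive values, so no uniform lower bound is available there. Keeping all three kernel arguments $\le\tfrac12$, a three-link chain reaches at most $3\sqrt{2}\,\delta\approx 4.24\,\delta$, whereas the target kernel $C_\delta\bar R\bigl(\tfrac{|\xx-\yy|^2}{32\delta^2}\bigr)$ can be strictly positive up to $|\xx-\yy|=4\sqrt{2}\,\delta\approx 5.66\,\delta$. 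Concretely: for $R$ equal to $\gamma_0$ on $[0,\tfrac12]$ and a tiny $\epsilon>0$ on $(\tfrac12,1)$, at $|\xx-\yy|=4.5\,\delta$ the left-hand side of your pointwise inequality is of order $\epsilon\,C_\delta$ while the right-hand side is of order $\epsilon^3\,C_\delta$, so no $\delta$-independent $C$ can work. The claim that ``a two-step chain extends the effective support to $6\delta$'' conflates nominal support with the region on which a lower bound holds; only the latter is usable. You would need at least four intermediate points, so that five links, each comfortably below $\sqrt{2}\,\delta$ in length with room for the ball radii and for the inward shift near $\partial\Omega$, can span $4\sqrt{2}\,\delta$. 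Note also that Assumption~\ref{assumption} does not imply $\bar R(\tfrac12)>0$; only $\bar R(r)>0$ for $r<\tfrac12$ follows. With the chain lengthened and the lower bound re-derived on a slightly smaller range (say $r\le\tfrac14$, where $\bar R\ge \gamma_0/4$), the rest of your argument (Fubini, spectator-kernel integration, absorbing the squared differences into the Dirichlet energy) does go through and is comparable in spirit to the paper's iteration.
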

Lemma \ref{yz kernel} is a generalized version of a lemma in \cite{NonlocalStokes}, and we put its proof in Appendix \ref{Appendix yz kernel}.
The proof of Lemma \ref{corelemma} can be found in Appendix \ref{Appendix corelemma}.

Now we start to estimate the second term
\begin{equation*}
\begin{aligned}
&\norm{I_{2,2}(\xx)}_{L^2(\Omega)}^2\\
&=4\delta^2 \into \dfrac{1}{\phi_\delta^4(\xx)}\left(\intpo \bRd \d S_\yy\right)^2\left|\into \nabla_\xx\Rd u_\delta(\yy)\d\yy\right|^2\d\xx\\
&\leq C\delta^2\into \left(\intpo\bar{R}_{\delta}(\xx,\zz)\d S_\zz\right)^2\left(\into |\nabla_\xx R_\delta(\xx,\yy)||\ud(\yy)|\d\yy\right)^2\d\xx\\
&= C\delta^2 \into \left(\into\intpo \bar{R}_{\delta}(\xx,\zz)|\nabla_\xx R_{\delta}(\xx,\yy)||u_\delta(\yy)|\d S_\zz\d\yy\right)^2\d\xx\\
&\leq C\delta^2 \into \left(\into\intpo \bar{R}_{\delta}(\xx,\zz)|\nabla_\xx R_{\delta}(\xx,\yy)|\d S_\zz\d\yy\right)\left(\into\intpo \bar{R}_{\delta}(\xx,\zz)|\nabla_\xx R_{\delta}(\xx,\yy)|u^2_\delta(\yy)\d S_\zz\d\yy\right)\d\xx\\
&= C\delta^2 \into \left(\into|\nabla_\xx R_{\delta}(\xx,\yy)|\d\yy\right)\left(\intpo \bar{R}_{\delta}(\xx,\zz)\d S_\zz\right)\left(\into\intpo \bar{R}_{\delta}(\xx,\zz)|\nabla_\xx R_{\delta}(\xx,\yy)|u^2_\delta(\yy)\d S_\zz\d\yy\right)\d\xx\\
&\leq C\into \left(\into\intpo \bar{R}_{\delta}(\xx,\zz)|\nabla_\xx R_{\delta}(\xx,\yy)|u^2_\delta(\yy)\d S_\zz\d\yy\right)\d\xx\\
&\leq C\into u_{\delta}^2(\yy)\intpo\left(\into\bar{R}_{\delta}(\xx,\zz)|\nabla_\xx R_{\delta}(\xx,\yy)|\d\xx\right)\d S_\zz\d\yy\\
&\leq \dfrac{C}{\delta}\int_{\Omega}u_{\delta}^2(\yy)\intpo C_\delta \bar{R}\left(\dfrac{|\yy-\zz|^2}{32\delta^2}\right)\d S_\zz\d\yy\\
&\leq \dfrac{C}{\delta}\int_{\Omega}u_{\delta}^2(\yy)\intpo \bar{R}_{\delta}(\yy,\zz)\d S_\zz\d\yy+\dfrac{C}{\delta^2}\int_{\Omega}\int_{\Omega}R_{\delta}(\yy,\zz)(u(\yy)-u(\zz))^2\d\yy\d\zz.
\end{aligned}
\end{equation*}
The third term can be estimated in a similar way 
\begin{equation*}
\begin{aligned}
\norm{I_{2,3}(\xx)}_{L^2(\Omega)}^2
&=4\delta^2 \into \dfrac{1}{\phi_\delta^4(\xx)}\left|\intpo \nabla_\xx \bar{R}_{\delta}(\xx,\yy)\d S_\yy\right|^2\left|\into \Rd\ud(\yy)\d\yy\right|^2\d\xx\\
&\leq C\delta^2\into \left(\intpo |\nabla_\xx \bar{R}_{\delta}(\xx,\yy)|\d S_\yy\right)^2\left(\into R_\delta(\xx,\zz)\ud(\zz)\d\zz\right)^2\d\xx\\
&=C\delta^2 \into\left(\into\intpo R_\delta(\xx,\zz)|\nabla_\xx \bRd||u_\delta(\zz)|\d S_\yy\d\zz\right)\d\xx\\
&\leq C\into u^2_{\delta}(\zz)\intpo\left(\into R_\delta(\xx,\zz)|\nabla_\xx \bRd|\d\xx\right)\d S_\yy\d\zz\\
&\leq \dfrac{C}{\delta}\int_{\Omega}u_{\delta}^2(\zz)\intpo C_\delta \bar{R}\left(\dfrac{|\yy-\zz|^2}{32\delta^2}\right)\d S_\yy\d\zz\\
&\leq \dfrac{C}{\delta}\int_{\Omega}u_{\delta}^2(\zz)\intpo \bar{R}_{\delta}(\zz,\yy)\d S_\yy\d\zz+\dfrac{C}{\delta^2}\int_{\Omega}\int_{\Omega}R_{\delta}(\yy,\zz)(u(\yy)-u(\zz))^2\d\yy\d\zz.
\end{aligned}
\end{equation*}
Here we can see 
\begin{align}
\label{grad estimation2}
\norm{\nabla I_{2}(\xx)}_{L^2(\Omega)}^2\leq C\left(\dfrac{1}{2\delta^2}\into\into \Rd (u_{\delta}(\xx)-u_{\delta}(\yy))^2\d\xx\d\yy\right.\nonumber\\
+\left.\dfrac{1}{\delta}\int_{\Omega}u_{\delta}^2(\xx)\intpo \bar{R}_{\delta}(\xx,\yy)\d S_\yy\d\xx\right).
\end{align}
Multiply $u_\delta(\xx)$ to both sides of (\ref{nonlocal}) and integrate in $\Omega$, we can get
\begin{align}
	\label{trick}
	&\dfrac{1}{2\delta^2}\into\into \Rd (u_{\delta}(\xx)-u_{\delta}(\yy))^2\d\xx\d\yy+\dfrac{2}{\delta}\int_{\Omega}u_{\delta}^2(\xx)\intpo \bar{R}_{\delta}(\xx,\yy)\d S_\yy\d\xx\nonumber\\
	=& \into u_\delta(\xx)\into \bRd f(\yy)\d\yy\d\xx+\dfrac{2}{\delta}\into u_\delta(\xx)\intpo\bRd g(\yy)\d S_\yy\nonumber\\
	\leq& C\|u_{\delta}\|_{L^2(\Omega)}\|f\|_{L^2(\Omega)}+\dfrac{2}{\delta}\intpo g(\yy)\into \bRd u_\delta(\xx)\d\xx\d S_\yy\nonumber\\
	\leq& C\|u_{\delta}\|_{L^2(\Omega)}\|f\|_{L^2(\Omega)}+\dfrac{1}{4\epsilon}\dfrac{2}{\delta}\norm{g}^2_{L^2(\partial\Omega)}+\dfrac{2\epsilon}{\delta}\intpo \left(\into \bRd u_\delta(\xx)\d \xx\right)^2\d S_\yy\nonumber\\
	\leq& C\|u_{\delta}\|_{L^2(\Omega)}\|f\|_{L^2(\Omega)}+ \dfrac{C(\epsilon)}{\delta}\norm{g}^2_{L^2(\partial\Omega)}+\dfrac{2\epsilon}{\delta}\into u^2_\delta(\xx)\intpo\bRd\d S_\yy
\end{align}
Take $\epsilon$ small enough, we get 
\begin{align}
	\label{right side estimation}
	\dfrac{1}{2\delta^2}\into\into \Rd (u_{\delta}(\xx)-u_{\delta}(\yy))^2\d\xx\d\yy+\dfrac{2}{\delta}\int_{\Omega}u_{\delta}^2(\xx)\intpo \bar{R}_{\delta}(\xx,\yy)\d S_\yy\d\xx\nonumber\\
	\leq C\|u_{\delta}\|_{L^2(\Omega)}\|f\|_{L^2(\Omega)}+\dfrac{C}{\delta}\norm{g}^2_{L^2(\partial\Omega)}.
\end{align}

Combine (\ref{grad estimation1}) (\ref{grad estimation2}) (\ref{grad estimation3}) (\ref{right side estimation}) and Proposition \ref{coercivity}, we can get the final $H^1$ estimation
\begin{equation*}
\begin{aligned}
&\norm{u_{\delta}}^2_{H^1(\Omega)}\\
&=\norm{u_{\delta}}^2_{L^2(\Omega)}+\norm{\nabla u_{\delta}}^2_{L^2(\Omega)}\\
&\leq C\left(\dfrac{1}{2\delta^2}\into\into \Rd (u_{\delta}(\xx)-u_{\delta}(\yy))^2\d\xx\d\yy+\dfrac{1}{\delta}\int_{\Omega}u_{\delta}^2(\xx)\intpo \bar{R}_{\delta}(\xx,\yy)\d S_\yy\d\xx\right)\\
&\hspace{2cm}+C\delta^2\norm{f}^2_{L^2(\Omega)}+\dfrac{C}{\delta}\norm{g}^2_{L^2(\partial\Omega)}\\
&\leq C\|u_{\delta}\|_{L^2(\Omega)}\|f\|_{L^2(\Omega)}+C\delta^2\norm{f}^2_{L^2(\Omega)}+\dfrac{C}{\delta}\norm{g}^2_{L^2(\partial\Omega)}\\
&\leq C\|u_{\delta}\|_{H^1(\Omega)}\|f\|_{L^2(\Omega)}+C\delta^2\norm{f}^2_{L^2(\Omega)}+\dfrac{C}{\delta}\norm{g}^2_{L^2(\partial\Omega)},
\end{aligned}
\end{equation*}
which implies 
\begin{equation*}
	\norm{u_{\delta}}_{H^1(\Omega)}\leq C\norm{f}_{L^2(\Omega)}+\dfrac{C}{\sqrt{\delta}}\norm{g}_{L^2(\Omega)}.
\end{equation*}
Here we finished the proof of Theorem \ref{wellpose1}.

\subsection{Well-posedness in $H^{-1}$ space (Theorem \ref{thm:h-1})}
For Poisson equation 
\begin{equation*}
-\Delta u=f,
\end{equation*}
the right hand term $f$ is generally assumed to be in $H^{-1}(\Omega)$, which is the dual space of $H_0^1(\Omega)$. However, in the nonlocal model 
\begin{align*}
\dfrac{1}{\delta^2}\into \Rd (u_\delta(\xx)-u_\delta(\yy))\d\yy+\dfrac{2}{\delta}\intpo\bRd (u_\delta(\xx)-g(\yy))\d S_\yy\\
=\into \bRd f(\yy)\d\yy
\end{align*}
we can not set $f\in H^{-1}(\Omega)$ since $\bRd$ does not belong to $H^{1}_0(\Omega)$ . To deal with this issue, we do a little modification to assume $f\in H^{-1}(V^{2\delta})$, then for any $\xx\in\Omega$, $\bRd\in H_0^1(V^{2\delta})$ as a function of variable $\yy$.

Then, the nonlocal model becomes
\begin{align}
    \label{integralequ2}
    \dfrac{1}{\delta^2}\into \Rd (u_\delta(\xx)-u_\delta(\yy))\d\yy+\dfrac{2}{\delta}\intpo\bRd (u_\delta(\xx)-g(\yy))\d S_\yy\nonumber\\
	=\left\langle f,\bar{R}_\delta\right\rangle(\xx).
\end{align}

The Existence and uniqueness can be proved by Lax-Milgram theorem as in chapter \ref{existence and uniqueness}. The only difference appears in the boundedness of the right hand side as a linear functional, that is to verify
\begin{equation*}
\int_\Omega \langle f,\bar{R}_\delta\rangle(\xx) p(\xx)\d\xx\leq C(\delta)\norm{p}_{L^2(\Omega)}, \quad \forall p\in L^2(\Omega).
\end{equation*} 
In fact, 
\begin{equationa*}
	\int_\Omega \langle f,\bar{R}_\delta\rangle(\xx) p(\xx)\d\xx =\left\langle f,\into \bRd p(\xx)\d\xx\right\rangle\leq \norm{f}_{H^{-1}(V^{2\delta})}\norm{\tilde{p}}_{H_0^1(V^{2\delta})}
\end{equationa*}
where 
\begin{equation*}
    \tilde{p}(\yy)=\into \bRd p(\xx)\d\xx.
\end{equation*}
It is obvious that
\begin{align}
\label{tildeL2}
\norm{\tilde{p}}_{L^2(V^{2\delta})}^2&=\int_{V^{2\delta}}\left(\into \bRd p(\yy)\d\yy\right)^2\d\xx\nonumber\\
&\leq \int_{V^{2\delta}}\left(\into \bRd \d\yy\right)\left(\into \bRd p^2(\yy)\d\yy\right)\d\xx\nonumber\\
&\leq C\into p^2(\yy)\int_{V^{2\delta}}\bRd\d\xx\d\yy\nonumber\\
&\leq C\norm{p}^2_{L^2(\Omega)}.
\end{align}
and 
\begin{equationa*}
\norm{\nabla \tilde{p}}_{L^2(V^{2\delta})}^2&=\int_{V^{2\delta}}\left(\int_\Omega\nabla_\xx\bRd p(\yy)\d\yy\right)^2\d\xx\\
&\leq \dfrac{C}{\delta^2}\int_{V^{2\delta}}\left(\int_\Omega\Rd p(\yy)\d\yy\right)^2\d\xx\\
&\leq \dfrac{C}{\delta^2}\norm{p}^2_{L^2(\Omega)}.
\end{equationa*}
That is 
\begin{equation*}
    \norm{\tilde{p}}_{H_0^1(V^{2\delta})}^2=\norm{\tilde{p}}_{L^2(V^{2\delta})}^2+\norm{\nabla \tilde{p}}_{L^2(V^{2\delta})}^2\leq \dfrac{C}{\delta^2}\norm{p}_{L^2(\Omega)}^2,
\end{equation*}
hence 
\begin{equation*}
    \int_\Omega \langle f,\bar{R}_\delta\rangle(\xx) u(\xx)\d\xx\leq\dfrac{C}{\delta}\norm{f}_{H^{-1}(V^{2\delta})}\norm{p}_{L^2(\Omega)}.
\end{equation*}
Here we know there exists a unique $u_\delta\in L^2(\Omega)$ satisfying (\ref{integralequ2}). Next we will give the $H^1$ estimation of $u_\delta$.

Denote 
\begin{equation*}
\bar{u}_\delta(\xx)=
\left\{
\begin{aligned}
u_\delta(\xx),\quad &\xx\in\Omega,\\
0,\quad&\xx\in \Omega^{2\delta},
\end{aligned}
\right.
\ \text{and}\quad
\tilde{u}_\delta(\xx)=\into \bRd u_\delta(\yy)\d\yy.
\end{equation*}
Now, we have $\displaystyle\bar{u}_\delta(\xx)\int_{V^{2\delta}}\nabla_\xx\bRd\d\yy=0,\forall \xx\in V^{2\delta}$, because for $\xx\in \Omega$, $\displaystyle\int_{V^{2\delta}}\bRd\d\yy$ is a constant.
Thus
\begin{equationa*}
	&\norm{\nabla\tilde{u}_{\delta}}_{L^2(V^{2\delta})}^2\\
	=&\int_{V^{2\delta}}\left(\into\nabla_\xx\bRd u_\delta(\yy)\d\yy\right)^2\d\xx\\
	=&\int_{V^{2\delta}}\left(\int_{V^{2\delta}}\nabla_\xx \bRd \bar{u}_\delta(\yy)\d\yy\right)^2\d\xx\\
	=&\int_{V^{2\delta}}\left(\int_{V^{2\delta}}\nabla_\xx \bRd (\bar{u}_\delta(\xx)-\bar{u}_\delta(\yy))\d\yy\right)^2\d\xx\\
	\leq& \dfrac{C}{\delta^2}\int_{V^{2\delta}}\left(\int_{V^{2\delta}}\Rd (\bar{u}_\delta(\xx)-\bar{u}_\delta(\yy))\d\yy\right)^2\d\xx\\
	\leq& \dfrac{C}{\delta^2}\int_{V^{2\delta}}\int_{V^{2\delta}}\Rd (\bar{u}_\delta(\xx)-\bar{u}_\delta(\yy))^2\d\xx\d\yy\\
	=&\dfrac{C}{\delta^2}\into\into\Rd (u_\delta(\xx)-u_\delta(\yy))^2\d\xx\d\yy+\dfrac{2C}{\delta^2}\into u_\delta^2(\xx)\int_{\Omega^{2\delta}}\Rd\d\yy\d\xx.
\end{equationa*}
Notice that 
\begin{equationa*}
&\into u_\delta^2(\xx)\int_{\Omega^{2\delta}}\Rd\d\yy\d\xx\\
=&\int_{\Omega_{2\delta}}u^2_\delta(\xx)\int_{\Omega^{2\delta}}\Rd\d\yy\d\xx\\
\leq& C\int_{\Omega_{2\delta}}u^2_\delta(\xx)\d\xx\\
\leq& C\delta\int_{\Omega_{2\delta}}u^2_\delta(\xx)\intpo C_\delta\bar{R}\left(\dfrac{|\xx-\yy|^2}{32\delta^2}\right) \d S_\yy\d\xx \\
\leq& C\delta\int_{\Omega}u^2_\delta(\xx)\intpo C_\delta\bar{R}\left(\dfrac{|\xx-\yy|^2}{32\delta^2}\right) \d S_\yy\d\xx\\
\leq& C\into\into \Rd(u_\delta(\xx)-u_\delta(\yy))^2\d\xx\d\yy+C\delta\into u_\delta^2(\xx)\intpo \bRd\d S_\yy\d\xx
\end{equationa*}
Here we use Lemma \ref{corelemma} and Proposition \ref{kernel estimation} because of the fact $d(\xx,\partial\Omega)<\dfrac{\sqrt{2}}{2}(2\sqrt{2}\delta),$ when $\xx\in \Omega_{2\delta}$.
Now we can get
\begin{equation*}
    \norm{\nabla\tilde{u}_{\delta}}_{L^2(V^{2\delta})}^2\leq \dfrac{C}{\delta^2}\into\into \Rd(u_\delta(\xx)-u_\delta(\yy))^2\d\xx\d\yy+\dfrac{C}{\delta}\into u_\delta^2(\xx)\intpo \bRd\d S_\yy\d\xx
\end{equation*}
In addition, as in (\ref{tildeL2}),
\begin{equationa*}
    \norm{\tilde{u}_\delta}_{L^2(V^{2\delta})}^2&\leq C\norm{u_\delta}_{L^2(\Omega)}^2\\
	&\leq \dfrac{C}{\delta^2}\into\into \Rd(u_\delta(\xx)-u_\delta(\yy))^2\d\xx\d\yy+\dfrac{C}{\delta}\into u_\delta^2(\xx)\intpo \bRd\d S_\yy\d\xx
\end{equationa*}
thus we have 
\begin{equationa*}
    \norm{\tilde{u}_\delta}_{H^1_0(V^{2\delta})}^2&\leq\dfrac{C}{\delta^2}\into\into \Rd(u_\delta(\xx)-u_\delta(\yy))^2\d\xx\d\yy+\dfrac{C}{\delta}\into u_\delta^2(\xx)\intpo \bRd\d S_\yy\d\xx\\
&\leq C\int_{\Omega}\langle f,\bar{R}_\delta\rangle(\xx) u_{\delta}(\xx)\d\xx+\dfrac{C}{\delta}\norm{g}^2_{L^2(\partial\Omega)}\\
&\leq C\norm{f}_{H^{-1}(V^{2\delta})}\norm{\tilde{u}_\delta}_{H_0^1(V^{2\delta})}+\dfrac{C}{\delta}\norm{g}^2_{L^2(\partial\Omega)}\\
&\leq C(\epsilon)\norm{f}^2_{H^{-1}(V^{2\delta})}+\epsilon\norm{\tilde{u}_\delta}^2_{H_0^1(V^{2\delta})}+\dfrac{C}{\delta}\norm{g}^2_{L^2(\partial\Omega)}.
\end{equationa*}
Here we use the same trick in (\ref{trick}) in the second inequality. Now we can get
$\norm{\tilde{u}_\delta}_{H^1_0(V^{2\delta})}\leq C\norm{f}_{H^{-1}(V^{2\delta})}+\dfrac{C}{\sqrt{\delta}}\norm{g}_{L^2(\partial\Omega)}$ and further
\begin{align}
\label{L2wellpose1}
\norm{u_\delta}_{L^2(\Omega)}^2&\leq\dfrac{C}{\delta^2}\into\into \Rd(u_\delta(\xx)-u_\delta(\yy))^2\d\xx\d\yy+\dfrac{C}{\delta}\into u_\delta^2(\xx)\intpo \bRd\d S_\yy\d\xx\nonumber\\
&\leq C\norm{f}_{H^{-1}(V^{2\delta})}\norm{\tilde{u}_\delta}_{H_0^1(V^{2\delta})}+\dfrac{C}{\delta}\norm{g}^2_{L^2(\partial\Omega)}\nonumber\\
&\leq C\norm{f}^2_{H^{-1}(V^{2\delta})}+\dfrac{C}{\sqrt{\delta}}\norm{f}_{H^{-1}(V^{2\delta})}\norm{g}_{L^2(\partial\Omega)}+\dfrac{C}{\delta}\norm{g}^2_{L^2(\partial\Omega)}\nonumber\\
&\leq C\norm{f}^2_{H^{-1}(V^{2\delta})}+\dfrac{C}{\delta}\norm{g}^2_{L^2(\partial\Omega)}.
\end{align}

Further more, follow the notations in chapter \ref{H1wellpose},
\begin{align}
	\label{new closed form}
    u_\delta(\xx)&=\dfrac{\delta^2\langle f,\bar{R}_\delta\rangle(\xx)+\into R_\delta(\xx,\yy)u_\delta(\yy)\d\yy+2\delta\intpo\bRd g(\yy)\d S_\yy}{\into \Rd\d\yy+2\delta\intpo \bRd \d S_\yy}\nonumber\\
	&=\dfrac{\delta^2\langle f,\bar{R}_\delta\rangle(\xx)}{\phi_\delta(\xx)}+\dfrac{1}{\phi_\delta(\xx)}\into \Rd u_\delta(\yy)\d\yy+\dfrac{2\delta}{\phi_\delta(\xx)}\intpo \bRd g(\yy)\d S_\yy.
\end{align}

In chapter \ref{H1wellpose}, we have proved 
\begin{align}
\label{H1result}
&\left\|\nabla \left(\dfrac{1}{\phi_\delta(\xx)}\into \Rd u_\delta(\yy)\d\yy\right)\right\|^2_{L^2(\Omega)}\nonumber\\
\leq& \dfrac{C}{\delta^2}\into\into \Rd(u_\delta(\xx)-u_\delta(\yy))^2\d\xx\d\yy+\dfrac{C}{\delta}\into u_\delta^2(\xx)\intpo \bRd\d S_\yy\d\xx
\end{align}
and 
\begin{align}
	\label{H1result2}
	&\left\|\nabla \left(\dfrac{2\delta}{\phi_\delta(\xx)}\intpo \bRd g(\yy)\d\yy\right)\right\|^2_{L^2(\Omega)}\leq \dfrac{C}{\delta}\norm{g}^2_{L^2(\partial\Omega)}.
	\end{align}
What left is the estimation of the first term of (\ref{new closed form}). We can prove 
\begin{equationa}
	\label{grad first term}
    \left\|\nabla\left(\dfrac{\delta^2\langle f,\bar{R}_\delta\rangle(\xx)}{\phi_\delta(\xx)}\right)\right\|_{L^2(\Omega)}\leq C\norm{f}_{H^{-1}(V^{2\delta})}.
\end{equationa}
The detail of the proof can be found in  Appendix \ref{Appendix grad first term}.

Combine (\ref{H1result}) (\ref{H1result2}) (\ref{grad first term}) and (\ref{L2wellpose1}), we have 
\begin{align}
	\label{gradL2wellpose}
	\|\nabla u_\delta\|^2_{L^2(\Omega)}&\leq C\norm{f}^2_{H^{-1}(V^{2\delta})}+\dfrac{C}{\delta^2}\into\into \Rd(u_\delta(\xx)-u_\delta(\yy))^2\d\xx\d\yy\nonumber\\
	&\quad\quad+\dfrac{C}{\delta}\into u_\delta^2(\xx)\intpo \bRd\d S_\yy\d\xx+\dfrac{C}{\delta}\norm{g}^2_{L^2(\partial\Omega)}\nonumber\\
	&\leq C\norm{f}^2_{H^{-1}(V^{2\delta})}+\dfrac{C}{\delta}\norm{g}^2_{L^2(\partial\Omega)}.
\end{align}
Finally, (\ref{L2wellpose1}) and (\ref{gradL2wellpose}) imply 
\begin{equation*}
    \norm{u_\delta}_{H^1(\Omega)}\leq C\norm{f}_{H^{-1}(V^{2\delta})}+\dfrac{C}{\sqrt{\delta}}\norm{g}_{L^2(\partial\Omega)}.
\end{equation*}

\section{$H^1$ Convergence (Theorem \ref{Convergence theorem})} 
\label{sec:H1convergence}
In this section, we will prove the solution of the integral equation (\ref{nonlocal}) will convergent to the solution of the differential equation (\ref{eq:poisson}) in $H^1(\Omega)$.

We need some preparation before the proof of Theorem $\ref{Convergence theorem}$. For $v\in L^2(\Omega)$, define the integral operator 
\begin{equation*}
	L_\delta v(\xx)=\dfrac{1}{\delta^2}\into \Rd (v(\xx)-v(\yy))\d \yy+\dfrac{2}{\delta}\intpo \bRd v(\xx)\d S_\yy.
\end{equation*}
Let $e_\delta(\xx)=u(\xx)-u_\delta(\xx)$, it is easy to verify 
\begin{equationa*}
&L_{\delta}e_\delta(\xx)\\
=&\dfrac{1}{\delta^2}\into \Rd (u(\xx)-u(\yy))\d \yy+\dfrac{2}{\delta}\intpo \bRd u(\xx)\d S_\yy\\
&\quad\quad\quad-\into \bRd f(\yy)\d\yy-\dfrac{2}{\delta}\intpo\bRd g(\yy)\d S_\yy\\
=&\dfrac{1}{\delta^2}\into \Rd (u(\xx)-u(\yy))\d \yy-2\intpo \bRd \dfrac{\partial u}{\partial \nn}(\yy)\d S_\yy-\into \bRd f(\yy)\d\yy\\
&\quad\quad\quad +\dfrac{2}{\delta}\intpo\bRd\left(u(\xx)-u(\yy)+\delta\dfrac{\partial u}{\partial \nn}(\yy)\right)\d S_\yy.
\end{equationa*}
\begin{lemma}
\label{decompose}
Let 
\[r(\xx)=\dfrac{1}{\delta^2}\into \Rd (u(\xx)-u(\yy))\d \yy-2\intpo \bRd \dfrac{\partial u}{\partial \nn}(\yy)\d S_\yy-\into \bRd f(\yy)\d\yy.\]
It can be decomposed into two parts 
\[r(\xx)=r_{\mathrm{in}}(\xx)+r_{\mathrm{bd}}(\xx),\]
where
\[r_{\mathrm{bd}}(\xx)=\intpo \bRd (\xx-\yy)\cdot \mathbf{b}(\yy)\d S_\yy\]
here $\mathbf{b}(\yy)= \sum\limits_{j=1}^d n^j(\yy)\cdot\nabla(\nabla^j u(\yy))$, $\nn(\yy)=(n^1(\yy),\cdots,n^d(\yy))$ is the unit out normal vector of $\partial\Omega$ at $\yy$, $\nabla^j$ is the j-th component of gradient $\nabla$. If $u\in H^3(\Omega)$, then there exists constants
$C,\delta_0$ depending only on $\Omega$, such that for $\delta\leq \delta_0$,
\[\norm{r_{\mathrm{in}}}_{L^2(\Omega)}\leq C\delta\norm{u}_{H^3(\Omega)}, \quad \norm{\nabla r_{\mathrm{in}}}_{L^2(\Omega)}\leq C\norm{u}_{H^3(\Omega)}\]
It is also easy to directly verify 
\[\norm{r_{\mathrm{bd}}}_{L^2(\Omega)}^2\leq C\delta\norm{u}_{H^3(\Omega)}^2,\quad \norm{\nabla r_{\mathrm{bd}}}_{L^2(\Omega)}^2\leq \frac{C}{\delta}\norm{u}_{H^3(\Omega)}^2.\]
\end{lemma}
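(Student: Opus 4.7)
\textbf{Proof plan for Lemma \ref{decompose}.} The starting point is to use $-\Delta u = f$ to rewrite
\[
r(\xx)=\frac{1}{\delta^2}\int_\Omega R_\delta (u(\xx)-u(\yy))\,\d\yy-2\int_{\partial\Omega}\bar R_\delta\,\partial_\nn u\,\d S_\yy+\int_\Omega \bar R_\delta\,\Delta u\,\d\yy,
\]
which is exactly the quantity controlled by Lemma \ref{lem:error}, so $r(\xx)=r_u(\xx)+O(\delta^2)$. The first term of $r_u$ rearranges as $\sum_{ij}\int_{\p\Omega}\bar R_\delta n_i(\yy)(x_j-y_j)\p_{ij}u(\yy)\,\d S_\yy=\int_{\p\Omega}\bar R_\delta(\xx-\yy)\cdot \mathbf{b}(\yy)\,\d S_\yy=r_{\mathrm{bd}}(\xx)$. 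Accordingly I define $r_{\mathrm{in}}(\xx):=r(\xx)-r_{\mathrm{bd}}(\xx)$, which is the sum of the two subleading terms of $r_u$ plus the $O(\delta^2)$ volume remainder.

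\emph{Estimating $r_{\mathrm{bd}}$.} Since $|\xx-\yy|\le 2\delta$ on the support of $\bar R_\delta$, pointwise $|r_{\mathrm{bd}}(\xx)|\le 2\delta \int_{\p\Omega}\bar R_\delta|\mathbf{b}|\,\d S_\yy$. Squaring, Cauchy-Schwarz, Fubini, and Proposition \ref{kernel estimation} yield $\|r_{\mathrm{bd}}\|_{L^2(\Omega)}^2\le C\delta\|\mathbf{b}\|_{L^2(\p\Omega)}^2$, and the trace theorem gives $\|\mathbf{b}\|_{L^2(\p\Omega)}\le C\|u\|_{H^3(\Omega)}$ since $\mathbf{b}$ involves only second derivatives of $u$. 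For $\nabla r_{\mathrm{bd}}$, differentiation under the integral and the identity $\nabla_\xx \bar R_\delta=-(\xx-\yy)R_\delta/(2\delta^2)$ yield two pieces: in one the explicit $(\xx-\yy)$ factor combines with $\nabla_\xx \bar R_\delta$ to give a bounded $R_\delta$-type kernel (same $L^2$ scaling as $r_{\mathrm{bd}}$), while in the other the $(\xx-\yy)$ factor is differentiated away, producing $\int_{\p\Omega}\bar R_\delta\mathbf{b}\,\d S_\yy$ whose $L^2(\Omega)$-norm is of order $\delta^{-1/2}\|u\|_{H^3}$ by the same Cauchy-Schwarz argument.

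\emph{Estimating $r_{\mathrm{in}}$.} Each surviving term in $r_u$ carries one additional power of $\delta$ relative to $r_{\mathrm{bd}}$: the $\bar{\bar R}_\delta$ term has an explicit $\delta^2$ in front, while the cubic boundary term picks up an extra factor $|\xx-\yy|\le 2\delta$. The very same procedure as in the $r_{\mathrm{bd}}$ estimate therefore gains one power of $\delta$, giving $\|r_{\mathrm{in}}\|_{L^2}\le C\delta\|u\|_{H^3}$ and $\|\nabla r_{\mathrm{in}}\|_{L^2}\le C\|u\|_{H^3}$. The $O(\delta^2)$ volume remainder from Lemma \ref{lem:error} is handled by Minkowski after writing the Taylor remainder as $\int_0^1(1-s)^2 D^3 u(\xx+s(\yy-\xx))\,\d s$ and pairing it with the compactly supported kernel.

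\emph{Main obstacle.} The delicate point is the $\delta^2 \int_{\p\Omega}\bar{\bar R}_\delta\,\p_\nn \Delta u\,\d S_\yy$ contribution in $r_{\mathrm{in}}$: the naive estimate would require the trace of a third-order derivative of $u$, which is too weak for $u\in H^3(\Omega)$. The remedy is to use Green's identity together with the kernel identity $\nabla_\yy \bar{\bar R}_\delta=(\xx-\yy)\bar R_\delta/(2\delta^2)$ to rewrite this boundary integral as a volume integral of the form $\int_\Omega \bar R_\delta\,\p_k \p_j u\,\d\yy$ (possibly against kernel gradients of size $O(\delta^{-1})$), eliminating the unwanted trace. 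Cauchy-Schwarz on the resulting volume integral then closes the estimate with only the volume $H^3$ norm of $u$; the analogous trick, applied after $\nabla_\xx$ acts on the kernel, handles the $\|\nabla r_{\mathrm{in}}\|_{L^2}$ estimate.
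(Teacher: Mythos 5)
The paper does not prove Lemma~\ref{decompose} itself but cites it from \cite{shi2017convergence}, so there is no in-paper argument to match your proposal against; nevertheless, your plan has a genuine gap. You build everything on Lemma~\ref{lem:error}, which is stated and proved only for $u\in C^4(\bar\Omega)$, whereas Lemma~\ref{decompose} assumes only $u\in H^3(\Omega)$. This is not a cosmetic mismatch: the formula for $r_u$ in Lemma~\ref{lem:error} contains the boundary integral $\frac{1}{3}\sum_{i,j,k}\int_{\p\Omega}\bar R_\delta\, n_i(x_j-y_j)(x_k-y_k)\,\partial^3_{ijk}u\,\d S_\yy$, which you propose to bound "by the same procedure" as $r_{\mathrm{bd}}$, i.e.\ Cauchy--Schwarz plus the trace theorem. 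But $D^3 u\in L^2(\Omega)$ has no $L^2(\p\Omega)$ trace, so that estimate is unavailable under the hypothesis of the lemma. The $\bar{\bar R}_\delta\,\p_\nn\Delta u$ term has the same defect; you notice it, but your proposed Green's-identity repair reintroduces $\Delta^2 u$ in the volume, which again requires $H^4$, not $H^3$. Finally, the unnamed $O(\delta^2)$ remainder in Lemma~\ref{lem:error} is the fourth-order Taylor tail and secretly carries $D^4u$, so the rewrite $\int_0^1(1-s)^2 D^3u(\xx+s(\yy-\xx))\,\d s$ you suggest is the remainder of a \emph{second}-order expansion, not the one used in Lemma~\ref{lem:error}; the two are not interchangeable.

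The fix is to never go as far as Lemma~\ref{lem:error}: Taylor-expand $u(\xx)-u(\yy)$ only to second order with an integral third-order remainder, and perform one integration by parts on the quadratic term. This produces exactly one boundary term, namely $r_{\mathrm{bd}}(\xx)=\sum_{i,j}\int_{\p\Omega}\bar R_\delta\, n_i(x_j-y_j)\,\partial^2_{ij}u\,\d S_\yy$, involving only $D^2u$ (which does have a trace when $u\in H^3$), while $r_{\mathrm{in}}$ is then a sum of \emph{volume} integrals of $D^3u$ paired with the kernel or its derivative -- for instance $-\sum_j\int_\Omega \bar R_\delta(x_j-y_j)\,\partial_j\Delta u\,\d\yy$ and $\frac{1}{\delta^2}\int_\Omega R_\delta\,\mathcal R_3(\xx,\yy)\,\d\yy$ with $\mathcal R_3$ the cubic integral remainder. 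Each such volume integral yields $\|r_{\mathrm{in}}\|_{L^2(\Omega)}\le C\delta\|u\|_{H^3(\Omega)}$ by Cauchy--Schwarz in $\yy$, Fubini, and Proposition~\ref{kernel estimation}, and one extra factor $\delta^{-1}$ from differentiating the kernel gives $\|\nabla r_{\mathrm{in}}\|_{L^2(\Omega)}\le C\|u\|_{H^3(\Omega)}$. Your $r_{\mathrm{bd}}$ estimates and the overall identification via $-\Delta u=f$ are fine; it is the reliance on Lemma~\ref{lem:error} that must be dropped.
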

\begin{lemma}
\label{decompose2}
Let $g\in H^1(\Omega)$,
\[h(\xx)=\intpo \bRd (\xx-\yy)\cdot \mathbf{b}(\yy)\d S_\yy.\]
If $\mathbf{b}\in H^1(\Omega)$, then there exists constant C depending only on $\Omega$, so that 
\[\left|\into g(\xx)h(\xx)\d\xx\right|\leq C\delta \norm{\mathbf{b}}_{H^1(\Omega)}\norm{g}_{H^1(\Omega)}\]
\end{lemma}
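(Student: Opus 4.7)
The plan is to exploit the divergence-form identity
\[
\bar{R}_\delta(\xx,\yy)(\xx-\yy) \;=\; -2\delta^2\, \nabla_\xx \bar{\bar{R}}_\delta(\xx,\yy),
\]
which follows at once from $\bar{\bar{R}}'(r)=-\bar{R}(r)$. Geometrically, this trades the pointwise factor $\xx-\yy$, which is $O(\delta)$ on the kernel support, for a gradient of a smoother kernel with a prefactor $2\delta^2$. This rewrite is the source of the extra $\delta$ in the final estimate.

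Substituting into the definition of $h$ and applying Fubini,
\[
\into g(\xx) h(\xx)\d\xx \;=\; -2\delta^2 \intpo \mathbf{b}(\yy)\cdot \into g(\xx)\,\nabla_\xx \bar{\bar{R}}_\delta(\xx,\yy)\d\xx\, \d S_\yy.
\]
Integration by parts in $\xx$, which uses $g\in H^1(\Omega)$, then splits the right-hand side into a boundary-boundary piece
\[
T_1 \;=\; -2\delta^2 \intpo\intpo g(\xx)\,\bar{\bar{R}}_\delta(\xx,\yy)\,\nn(\xx)\cdot\mathbf{b}(\yy)\,\d S_\xx\, \d S_\yy
\]
and a volume-boundary piece
\[
T_2 \;=\; 2\delta^2 \intpo \into \nabla g(\xx)\cdot \mathbf{b}(\yy)\,\bar{\bar{R}}_\delta(\xx,\yy)\d\xx\, \d S_\yy.
\]

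For $T_1$ I would apply Cauchy-Schwarz twice against the $\bar{\bar{R}}_\delta$ weight on $\partial\Omega\times\partial\Omega$ and invoke the surface bound $\intpo \bar{\bar{R}}_\delta(\xx,\yy)\d S_\yy \le C/\delta$ from Proposition \ref{kernel estimation} (which applies to $\bar{\bar{R}}$ as well); this gives $|T_1|\le C\delta\,\|g\|_{L^2(\partial\Omega)}\|\mathbf{b}\|_{L^2(\partial\Omega)}$. For $T_2$, a similar weighted Cauchy-Schwarz combined with $\into \bar{\bar{R}}_\delta(\xx,\yy)\d\xx \le C$ yields $|T_2|\le C\delta^{3/2}\|\nabla g\|_{L^2(\Omega)}\|\mathbf{b}\|_{L^2(\partial\Omega)}$. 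Applying the trace inequality $\|\cdot\|_{L^2(\partial\Omega)}\le C\|\cdot\|_{H^1(\Omega)}$ to both $g$ and $\mathbf{b}$, and absorbing the superfluous $\delta^{1/2}$ in $T_2$ into the constant (assuming $\delta$ is bounded), collapses both bounds into the desired $C\delta\,\|\mathbf{b}\|_{H^1(\Omega)}\|g\|_{H^1(\Omega)}$.

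The main obstacle is getting the accounting right: one integration by parts against the kernel saves the full $\delta^2$ supplied by the identity but costs at most $\delta^{-1}$ from the $(n-1)$-dimensional surface measure on $\partial\Omega$, so the net gain is exactly one power of $\delta$. A related subtlety is that the boundary-boundary term $T_1$ forces the trace of $g$ to appear, explaining why the hypothesis is $g\in H^1(\Omega)$ rather than $L^2(\Omega)$, and why $\mathbf{b}$ must also be controlled in $H^1(\Omega)$ so that its trace on $\partial\Omega$ is available.
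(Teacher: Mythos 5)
The paper does not actually prove Lemma~\ref{decompose2}; it is cited from \cite{li2017point}. So there is no in-paper proof to compare against, and I can only evaluate your argument on its own terms. Your proof is correct. The identity $\bar{R}_\delta(\xx,\yy)(\xx-\yy) = -2\delta^2\nabla_\xx\bar{\bar{R}}_\delta(\xx,\yy)$ is exactly right (it follows from $\bar{\bar{R}}'=-\bar{R}$ and the chain rule with the $|\xx-\yy|^2/(4\delta^2)$ rescaling), and it is the same manipulation the paper itself uses in Appendix~\ref{Appendix lem:error}, so it is very likely also the mechanism in the cited reference. Your Fubini--integration-by-parts decomposition into the boundary--boundary term $T_1$ and the volume--boundary term $T_2$, followed by weighted Cauchy--Schwarz against $\bar{\bar{R}}_\delta$ and the bounds $\intpo\bar{\bar{R}}_\delta(\xx,\yy)\,\d S_\yy\le C/\delta$ and $\into\bar{\bar{R}}_\delta(\xx,\yy)\,\d\xx\le C$, does produce $|T_1|\le C\delta\|g\|_{L^2(\p\Omega)}\|\mathbf{b}\|_{L^2(\p\Omega)}$ and $|T_2|\le C\delta^{3/2}\|\nabla g\|_{L^2(\Omega)}\|\mathbf{b}\|_{L^2(\p\Omega)}$, and the trace inequality closes the argument.

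One small point worth spelling out if you write this up: Proposition~\ref{kernel estimation} is stated for kernels satisfying the full Assumption~\ref{assumption}, including the nondegeneracy condition (c), which $\bar{\bar{R}}$ need not inherit from $R$ (e.g.\ $\bar{\bar{R}}(1/2)$ could vanish if $R$ is supported in $[0,1/2]$). However, the only estimates you invoke are the two upper bounds, and those depend only on regularity, nonnegativity, and compact support in $[0,1]$ --- all of which $\bar{\bar{R}}$ does satisfy --- so the argument goes through; you should just say so explicitly rather than appeal to the proposition wholesale. The heuristic ``net gain is exactly one power of $\delta$'' is a bit loose since $T_2$ actually gains $\delta^{3/2}$, but you already note that you absorb the extra half power into the constant, so the accounting is fine.
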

Lemma \ref{decompose} can be found in \cite{shi2017convergence} and Lemma \ref{decompose2} can be found in \cite{li2017point}. Notice that Lemma \ref{decompose2} implies 
that 
\begin{equation*}
    \into r_{\mathrm{bd}}(\xx)e_{\delta}(\xx)\d\xx\leq C\delta\norm{\mathbf{b}}_{H^1(\Omega)}\norm{e_\delta}_{H^1(\Omega)}\leq C\delta \norm{u}_{H^3(\Omega)}\norm{e_\delta}_{H^1(\Omega)}. 
\end{equation*}

Follow the notations in Lemma \ref{decompose} and further denote 
\[\psi(\xx)=\dfrac{2}{\delta}\intpo\bRd\left(u(\xx)-u(\yy)+\delta\dfrac{\partial u}{\partial \nn}(\yy)\right)\d S_\yy\]
Then 
\begin{equation*}
    L_\delta e_\delta(\xx)=\dfrac{1}{\delta^2}\int_{\Omega}\Rd (e_{\delta}(\xx)-e_{\delta}(\yy))\d\yy+\dfrac{2}{\delta}\int_{\partial\Omega}\bRd e_{\delta}(\xx)\d S_\yy=r(\xx)+\psi(\xx).
\end{equation*}
\begin{equation}
\label{dfrac2}
e_{\delta}(\xx)=\dfrac{\delta^2(r(\xx)+\psi(\xx))+\into\Rd e_\delta(\yy)\d\yy}{\into\Rd\d\yy+2\delta\intpo\bRd\d S_\yy}
\end{equation}

Compare (\ref{dfrac2}) and (\ref{closed form}), replace $\hat{f}(\xx)$ with $r(\xx)+\psi(\xx)$ in Remark \ref{replace} and use the 
estimation of the rest in chapter \ref{H1wellpose}, we can get 
\begin{align}
    \label{errorH1}
    \norm{e_\delta}_{H^1(\Omega)}^2&\leq C\left(\dfrac{1}{2\delta^2}\into\into \Rd (e_{\delta}(\xx)-e_{\delta}(\yy))^2\d\xx\d\yy+\dfrac{2}{\delta}\into e_{\delta}^2(\xx)\intpo \bRd\d S_\yy\d\xx\right)\nonumber\\
    &\quad\quad\quad +C\delta^2\norm{r+\psi}^2_{L^2(\Omega)}+C\delta^4\norm{\nabla(r+\psi)}^2_{L^2(\Omega)} 
\end{align}
Notice that 
\begin{equationa*}
&\quad\into e_\delta(\xx)L_\delta e_\delta(\xx)\d\xx\\
&=\dfrac{1}{2\delta^2}\into\into \Rd (e_{\delta}(\xx)-e_{\delta}(\yy))^2\d\xx\d\yy+\dfrac{2}{\delta}\into e_{\delta}^2(\xx)\intpo \bRd\d S_\yy\\
&=\into \psi(\xx)e_\delta(\xx)\d\xx+\into r(\xx)e_\delta(\xx)\d\xx\\
&=\dfrac{2}{\delta}\into\intpo\bRd(u(\xx)-u(\yy))e_\delta(\xx)\d S_\yy\d \xx\\
&\qquad\qquad+\dfrac{2}{\delta}\into\intpo\bRd\delta\dfrac{\partial u}{\partial \nn}(\yy)e_\delta(\xx)\d S_\yy\d \xx+\into r(\xx)e_\delta(\xx)\d\xx\\
&\leq \dfrac{2}{\delta}\into\intpo \left(\epsilon \bRd e^2_{\delta}(\xx)+\dfrac{1}{4\epsilon}\bRd(u(\xx)-u(\yy))^2\right)\d S_\yy\d\xx\\
&\qquad\qquad+\dfrac{2}{\delta}\into\intpo\left(\epsilon\bRd e_\delta^2(\xx)+\dfrac{1}{4\epsilon}\delta^2 \left(\dfrac{\partial u}{\partial \nn}(\yy)\right)^2\right)\d S_\yy\d\xx+\into r(\xx)e_\delta(\xx)\d\xx
\end{equationa*}
Here we can choose a small $\epsilon$ such that $4\epsilon<1$ to get 
\begin{equationa*}
    &\dfrac{1}{2\delta^2}\into\into \Rd (e_{\delta}(\xx)-e_{\delta}(\yy))^2\d\xx\d\yy+\dfrac{2}{\delta}\into e_{\delta}^2(\xx)\intpo \bRd\d S_\yy\\
    \leq& \dfrac{C}{\delta}\into\intpo \bRd(u(\xx)-u(\yy))^2\d S_\yy\d \xx+C\delta\into\intpo \bRd\left(\dfrac{\partial u}{\partial \nn}(\yy)\right)^2\d S_\yy\d \xx\\
    &+C\into r(\xx)e_\delta(\xx)\d\xx
\end{equationa*}

We can further estimate these three terms. For the second term,
\begin{align}
\label{normal derivative}
&\quad\delta\into\intpo \bRd\left(\dfrac{\partial u}{\partial \nn}(\yy)\right)^2\d S_\yy\d \xx\nonumber\\
&=\delta\intpo \left(\dfrac{\partial u}{\partial \nn}(\yy)\right)^2\into \bRd\d\xx\d S_\yy\nonumber\\
&\leq C\delta\intpo \left(\dfrac{\partial u}{\partial \nn}(\yy)\right)^2\d S_\yy\nonumber\\
&\leq C\delta \intpo|\nabla u(\yy)|^2\d S_\yy\nonumber\\
&= C\delta \norm{\nabla u}_{L^2(\partial \Omega)}^2\nonumber\\
&\leq C\delta \norm{u}_{H^2(\Omega)}^2.
\end{align}
and the third term 
\begin{equationa*}
\into r(\xx)e_\delta(\xx)\d\xx&=\into r_{\mathrm{in}}(\xx)e_\delta(\xx)\d\xx+\into r_{\mathrm{bd}}(\xx)e_\delta(\xx)\d\xx\\
&\leq \norm{r_{\mathrm{in}}}_{L^2(\Omega)}\norm{e_\delta}_{L^2(\Omega)}+C\delta \norm{u}_{H^3(\Omega)}\norm{e_\delta}_{H^1(\Omega)}\\
&\leq C\delta \norm{u}_{H^3(\Omega)}\norm{e_\delta}_{L^2(\Omega)}+C\delta \norm{u}_{H^3(\Omega)}\norm{e_\delta}_{H^1(\Omega)}\\
&\leq C\delta \norm{u}_{H^3(\Omega)}\norm{e_\delta}_{H^1(\Omega)}.
\end{equationa*}

To estimate the first term, we need the parametrization in \cite{shi2017convergence}. The necessity of this parametrization is to provide local convexity, that is $\Phi^{-1}(\xx)+s(\Phi^{-1}(\yy)-\Phi^{-1}(\xx))$ make sense for $s\in [0,1]$, where $\Phi$ is the local parametrization. 
The estimation is derived by the the calculation in the parameter space along the line segment with endpoints $\Phi^{-1}(\xx)$ and $\Phi^{-1}(\yy)$.
In essence, this calculation is equivalent to the one in $\bar{\Omega}$ along a curve connecting $\xx$ and $\yy$.
For simplicity of notation, we directly explain our method in $\bar{\Omega}$ under the assumption $\bar{\Omega}$ is convex, otherwise the small compact support of kernel $\bRd$ can make us use the local parametrization.
\begin{align}
\label{oneorderdiff}
&\dfrac{1}{\delta}\into \intpo \bRd(u(\xx)-u(\yy))^2\d S_\yy\d\xx\nonumber\\
=&\dfrac{1}{\delta}\into \intpo \bRd\left(\int_0^1\dfrac{\d}{\d s}u(\xx+s(\yy-\xx))\d s\right)^2\d S_\yy\d\xx\nonumber\\
=&\dfrac{1}{\delta}\into \intpo \bRd\left(\int_0^1\nabla u(\xx+s(\yy-\xx))\cdot(\yy-\xx)\d s\right)^2\d S_\yy\d\xx\nonumber\\
\leq&\dfrac{C}{\delta}\into \intpo \bRd \int_0^1|\nabla u(\yy+s(\yy-\xx))|^2|(\yy-\xx)|^2\d s\d S_\yy\d\xx\nonumber\\
\leq&C\delta \int_0^1\into \intpo \bRd |\nabla u(\xx+s(\yy-\xx))|^2\d S_\yy\d\xx\d s
\end{align}
For $\forall s\in(0,1]$, take $\zz=\xx+s(\yy-\xx)$, we have 
\begin{align}
\label{changevariable}
&\into \intpo \bRd |\nabla u(\xx+s(\yy-\xx))|^2\d S_\yy\d\xx\nonumber\\
=&\into\intpo C_\delta \bar{R}\left(\dfrac{|\xx-\yy|^2}{4\delta^2}\right)|\nabla u(\xx+s(\yy-\xx))|^2\d S_\yy\d\xx\nonumber\\
\leq&C\into\intpo C_\delta \bar{R}\left(\dfrac{|\xx-\zz|^2}{4 s^2\delta^2}\right)|\nabla u(\zz)|^2\dfrac{1}{s^{n-1}}\d S_\zz\d\xx\nonumber\\
=&C\intpo |\nabla u(\zz)|^2\into s\bar{R}_{s\delta}(\xx,\zz)\d\xx\d S_\zz\nonumber\\
\leq & C \norm{\nabla u}_{L^2(\partial\Omega)}^2\nonumber\\
\leq & C\norm{u}_{H^2(\Omega)}^2.
\end{align}
Thus we can get  
\begin{align}
    \label{abcdedf}
    &\dfrac{1}{\delta}\into \intpo \bRd(u(\xx)-u(\yy))^2\d S_\yy\d\xx \nonumber\\
	\leq &C\delta \int_0^1\into \intpo \bRd |\nabla u(\xx+s(\yy-\xx))|^2\d S_\yy\d\xx\d s
	\leq C\delta \norm{u}_{H^2(\Omega)}^2. 
\end{align}
Combine these results, 
\begin{align}
\label{quadric form}
&\dfrac{1}{2\delta^2}\into\into \Rd (e_{\delta}(\xx)-e_{\delta}(\yy))^2\d\xx\d\yy+\dfrac{2}{\delta}\into e_{\delta}^2(\xx)\intpo \bRd\d S_\yy\nonumber\\
\leq& C\delta\norm{u}_{H^2(\Omega)}^2+C\delta \norm{u}_{H^3(\Omega)}\norm{e_\delta}_{H^1(\Omega)}.
\end{align}
The results of (\ref{normal derivative})(\ref{abcdedf}) in fact give 
\begin{align}
	\label{psiL2}
	&\norm{\psi}_{L^2(\Omega)}^2\nonumber\\
	=&\dfrac{1}{\delta^2} \into\left|\intpo \bRd(u(\xx)-u(\yy)+\delta\dfrac{\partial u}{\partial \nn}(\yy))\d S_\yy\right|^2\d \xx\nonumber\\
	\leq &\dfrac{1}{\delta^2}\into\left(\intpo \bRd\d S_\yy\right)\left(\intpo\bRd\left(u(\xx)-u(\yy)+\delta\dfrac{\partial u}{\partial \nn}(\yy)\right)^2\d S_\yy\right)\d \xx\nonumber\\
	\leq &\dfrac{C}{\delta^2}\left(\dfrac{1}{\delta}\into \intpo\bRd (u(\xx)-u(\yy))^2\d S_\yy\d\xx+\delta\into \intpo\bRd \left(\dfrac{\partial u}{\partial\nn}(\yy)\right)^2\d S_\yy\d\xx\right)\nonumber\\
	\leq & \dfrac{C}{\delta}\norm{u}_{H^2(\Omega)}^2. 
	\end{align}
We also need the estimation of $\|\nabla \psi\|_{L^2(\Omega)}$.
\begin{equation*}
	\nabla\psi(\xx)=-\dfrac{2}{\delta}\intpo \Rd\dfrac{\xx-\yy}{2\delta^2}\left(u(\xx)-u(\yy)+\delta\dfrac{\partial u}{\partial \nn}(\yy)\right)\d S_\yy+\dfrac{2}{\delta}\nabla u(\xx)\intpo\bRd \d S_\yy. 
\end{equation*}
The first term can be estimated similarly as (\ref{normal derivative})(\ref{oneorderdiff})(\ref{changevariable}) 
\begin{equationa*}
&\dfrac{1}{\delta^2} \into\left|\intpo \Rd\dfrac{\xx-\yy}{2\delta^2}(u(\xx)-u(\yy)+\delta\dfrac{\partial u}{\partial \nn}(\yy))\d S_\yy\right|^2\d \xx\\
\leq &\dfrac{1}{\delta^4}\into\left(\intpo \Rd\d S_\yy\right)\left(\intpo\Rd\left(u(\xx)-u(\yy)+\delta\dfrac{\partial u}{\partial \nn}(\yy)\right)^2\d S_\yy\right)\d \xx\\
\leq &\dfrac{C}{\delta^4}\left(\dfrac{1}{\delta}\into \intpo\Rd (u(\xx)-u(\yy))^2\d S_\yy\d\xx+\delta\into \intpo\Rd \left(\dfrac{\partial u}{\partial\nn}(\yy)\right)^2\d S_\yy\d\xx\right)\\
\leq & \dfrac{C}{\delta^3}\norm{u}_{H^2(\Omega)}^2. 
\end{equationa*}
For the second term, we also need divide it into two parts,
\begin{equationa*}
&\dfrac{4}{\delta^2}\int_\Omega\left|\nabla u(\xx)\intpo \bRd \d S_\yy\right|^2\d\xx\\
\leq& \dfrac{C}{\delta^2}\into\left(\intpo\bRd\d S_\yy\right)\left(\intpo\bRd|\nabla u(\xx)|^2\d S_\yy\right)\d\xx\\
\leq& \dfrac{C}{\delta^3}\into \left(\intpo\bRd|\nabla u(\xx)-\nabla u(\yy)|^2\d S_\yy\right)\d \xx+\dfrac{C}{\delta^3}\into\left(\intpo\bRd|\nabla u(\yy)|^2\d S_\yy\right)\d\xx
\end{equationa*}
The second part can be estimated directly
\begin{equationa*}
&\dfrac{C}{\delta^3}\into\left(\intpo\bRd|\nabla u(\yy)|^2\d S_\yy\right)\d\xx\\
\leq&\dfrac{C}{\delta^3}\intpo |\nabla u(\yy)|^2\into \bRd \d\xx\d S_\yy\\
\leq&\dfrac{C}{\delta^3}\norm{\nabla u(\yy)}_{L^2(\partial\Omega)}^2\\
\leq& \dfrac{C}{\delta^3}\norm{u}_{H^2(\Omega)}^2.
\end{equationa*}
Similar to (\ref{oneorderdiff})(\ref{changevariable}), 
\begin{equationa*}
&\dfrac{C}{\delta^3}\into \left(\intpo\bRd|\nabla u(\xx)-\nabla u(\yy)|^2\d S_\yy\right)\d \xx\\
=&\dfrac{C}{\delta^3}\into \left(\intpo\bRd\left|\int_0^1\dfrac{\d}{\d s }\nabla u(\xx+s(\yy-\xx))\d s\right|^2\d S_\yy\right)\d\xx\\
\leq &\dfrac{C}{\delta^3}\into \left(\intpo\bRd\int_0^1\sum_{i,j=1}^n|\nabla^j\nabla^i u(\xx+s(\yy-\xx))|^2|(x_j-y_j)|^2\d s\d S_\yy\right)\d\xx\\
\leq &\dfrac{C}{\delta}\sum_{i,j=1}^n\int_0^1\into\intpo\bRd|\nabla^j\nabla^i u(\xx+s(\yy-\xx))|^2\d S_\yy\d\xx\d s
\end{equationa*}
For $s\in(0,1]$, changing variable $\zz=\xx+s(\yy-\xx)$ can also get
\begin{equationa*}
    &\into\intpo\bRd|\nabla^j\nabla^i u(\xx+s(\yy-\xx))|^2\d S_\yy\d\xx\\
    \leq& C\into\intpo C_\delta \bar{R}\left(\dfrac{|\xx-\zz|^2}{4s^2\delta^2}\right)|\nabla^j\nabla^i u(\zz)|^2\dfrac{1}{s^{n-1}}\d S_\zz\d\xx\d s\\
    \leq& C\intpo |\nabla^j\nabla^i u(\zz)|^2\into s\bar{R}_{s\delta}(\xx,\zz)\d\xx\d S_\zz\\
    \leq& C\norm{\nabla^j\nabla^i u}_{L^2(\partial\Omega)}^2\\
    \leq& C\norm{u}_{H^3(\Omega)}^2.
\end{equationa*}

Combine these results, we get 
\begin{equation}
    \label{nablapsi}
    \norm{\nabla \psi}_{L^2(\Omega)}^2\leq \dfrac{C}{\delta^3}\norm{u}_{H^2(\Omega)}^2+\dfrac{C}{\delta}\norm{u}_{H^3(\Omega)}^2\leq \dfrac{C}{\delta^3}\norm{u}_{H^3(\Omega)}^2.
\end{equation}
Substitute (\ref{quadric form})(\ref{psiL2})(\ref{nablapsi}) into (\ref{errorH1}) and use Lemma \ref{decompose},
\begin{equationa*}
&\norm{e_\delta}_{H^1(\Omega)}^2\\
\leq& C\delta(\norm{u}_{H^2(\Omega)}^2+\norm{u}_{H^3(\Omega)}\norm{e_\delta}_{H^1(\Omega)})+C\delta^2(\norm{r}_{L^2(\Omega)}^2+\norm{\psi}_{L^2(\Omega)}^2)\\
&\qquad\qquad+C\delta^4(\norm{\nabla r}_{L^2(\Omega)}^2+\norm{\nabla \psi}_{L^2(\Omega)}^2)\\
\leq& C\delta\norm{u}_{H^3(\Omega)}\norm{e_\delta}_{H^1(\Omega)}+ C\delta\norm{u}_{H^2(\Omega)}^2+C\delta^2\dfrac{1}{\delta}\norm{u}_{H^2(\Omega)}+C\delta^2(\norm{r_{\mathrm{in}}}_{L^2(\Omega)}^2+\norm{r_{\mathrm{bd}}}_{L^2(\Omega)}^2)\\
&\qquad\qquad+C\delta^4\dfrac{1}{\delta^3}\norm{u}^2_{H^3(\Omega)}+C\delta^4(\norm{\nabla r_{\mathrm{in}}}_{L^2(\Omega)}^2+\norm{\nabla r_{\mathrm{bd}}}_{L^2(\Omega)}^2)\\
\leq& C\delta\norm{u}_{H^3(\Omega)}\norm{e_\delta}_{H^1(\Omega)}+C\delta\norm{u}_{H^3(\Omega)}^2+C\delta^2(\delta^2 \norm{u}_{H^3(\Omega)}^2+\delta \norm{u}_{H^3(\Omega)}^2)\\
&\qquad\qquad+C\delta^4\left(\norm{u}_{H^3(\Omega)}^2+\frac{C}{\delta}\norm{u}^2_{H^3}\right)\\
\leq& C\delta\norm{u}_{H^3(\Omega)}\norm{e_\delta}_{H^1(\Omega)}+C\delta\norm{u}_{H^3(\Omega)}^2,
\end{equationa*}
which implies 
\begin{equation*}
    \norm{e_\delta}_{H^1(\Omega)}\leq C\sqrt{\delta}\norm{u}_{H^3(\Omega)}.
\end{equation*}

\section{Discussion and Conclusion}
\label{sec:Discussion and Conclusion}
In this paper, we proposed nonlocal diffusion models preserving both symmetry and maximum principle. Based on these properties, well-posedness and $H^{1}$ convergence can be proved. Furthermore, maximum principle is crucial to get first order and second order convergence of the nonlocal model in $L^\infty$ convergence. Comparing with the previous second order nonlocal diffusion model \cite{zhang2021second}, the proposed model is much simpler and easy to implement. Actually, the nonlocal model in this paper is closely related to weighted nonlocal Laplacian (WNLL) \cite{shi2017weighted} which has been proved to be very powerful in image processing and semi-supervised learning. In the subsequent research, we will try to extend the nonlocal model to other systems, Stokes system for instance, and also explore the potential of the nonlocal model in applications. 
\appendix

\section{Proof of Lemma \ref{yz kernel}}
\label{Appendix yz kernel}
When $|\yy-\zz|\geq 4\delta$, we have $\max\{|\xx-\zz|,|\xx-\yy|\}\geq 2\delta$, then we have 
\[\tilde{R}_{\delta}(\xx,\zz)|\nabla_\xx\hat{R}_{\delta}(\xx,\yy)|=0,\quad\forall \xx\in\Omega.\]
At this time, Lemma \ref{yz kernel} is naturally true.

When $|\yy-\zz|<4\delta$, we have $\frac{|\yy-\zz|^2}{32\delta^2}<\frac{1}{2}$, 
\begin{equation*}
\begin{aligned}
K_{\delta}(\yy,\zz)&=\into \tilde{R}_{\delta}(\xx,\zz)|\nabla_\xx\hat{R}_{\delta}(\xx,\yy)|\d\xx\\
&\leq \into \tilde{R}_{\delta}(\xx,\zz)|\hat{R}'_{\delta}(\xx,\yy)|\dfrac{|\xx-\yy|}{2\delta^2}\d\xx\\
&\leq \dfrac{C^2_{\delta}}{2\delta}\into \tilde{R}\left(\dfrac{|\xx-\zz|^2}{4\delta^2}\right)\left|\hat{R}'\left(\dfrac{|\xx-\yy|^2}{4\delta^2}\right)\right|\d\xx\\
&\leq \dfrac{C^2_{\delta}}{2\delta}\int_{\Omega\cap B(\frac{\yy+\zz}{2},4\delta)}\tilde{R}\left(\dfrac{|\xx-\zz|^2}{4\delta^2}\right)\left|\hat{R}'\left(\dfrac{|\xx-\yy|^2}{4\delta^2}\right)\right|\d\xx\\
&\leq \dfrac{MC_\delta^2}{2\delta}\left|B(\frac{\yy+\zz}{2},4\delta)\right|\\
&\leq \dfrac{MC}{\delta\gamma_0}C_\delta\gamma_0\\
&\leq \dfrac{MC}{\delta\gamma_0}C_\delta \bar{R}\left(\dfrac{|\yy-\zz|^2}{32\delta^2}\right).
\end{aligned}
\end{equation*}
where $M=\max\limits_{r\in[0,1]}\tilde{R}(r)|\hat{R}'(r)|$, and $\gamma_0$ is the constant in the nondegeneracy assumption of our kernel.

\section{Proof of Lemma \ref{corelemma}}
\label{Appendix corelemma}
Before the proof of Lemma \ref{corelemma}, we need state an obvious fact as a lemma, that is 
\begin{lemma}
	\label{ball}
	There are two balls $B(\mathbf{c}_1,r_1),B(\mathbf{c}_2,r_2)\subset \RR^n$. If $r_1,r_2\geq \dfrac{\sqrt{2}}{4}\delta$ and 
	$r_1+r_2-|\mathbf{c}_1-\mathbf{c}_2|\geq \dfrac{\sqrt{2}}{2}\delta$, then there is a ball $B$ with radius $\dfrac{\sqrt{2}}{4}\delta$ such that $B\subset B(\mathbf{c}_1,r_1)\cap B(\mathbf{c}_2,r_2)$, hence 
	\[|B(\mathbf{c}_1,r_1)\cap B(\mathbf{c}_2,r_2)|\geq \left(\dfrac{\sqrt{2}}{4}\right)^n\delta^n|B(\mathbf{0},1)|.\]
\end{lemma}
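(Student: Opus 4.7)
The plan is to exhibit an explicit ball $B(\mathbf{p},\rho)$ of radius $\rho=\frac{\sqrt{2}}{4}\delta$ contained in $B(\mathbf{c}_1,r_1)\cap B(\mathbf{c}_2,r_2)$; the volume bound then follows at once from $|B(\mathbf{p},\rho)|=\rho^n|B(\mathbf{0},1)|=(\sqrt{2}/4)^n\delta^n|B(\mathbf{0},1)|$. So the entire task is the geometric containment.

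First I would reduce containment to a triangle-inequality condition: $B(\mathbf{p},\rho)\subset B(\mathbf{c}_i,r_i)$ is equivalent to $|\mathbf{p}-\mathbf{c}_i|\le r_i-\rho$ for $i=1,2$. The assumption $r_1,r_2\ge\rho$ makes both target radii non-negative, so the question reduces to whether a single point $\mathbf{p}$ can satisfy both bounds simultaneously. Setting $d=|\mathbf{c}_1-\mathbf{c}_2|$ and parametrizing the segment from $\mathbf{c}_1$ to $\mathbf{c}_2$ as $\mathbf{p}(t)=\mathbf{c}_1+t(\mathbf{c}_2-\mathbf{c}_1)/d$ for $t\in[0,d]$ (if $d>0$), the two bounds become $|\mathbf{p}(t)-\mathbf{c}_1|=t\le r_1-\rho$ and $|\mathbf{p}(t)-\mathbf{c}_2|=d-t\le r_2-\rho$, i.e.
\[
\max\{0,\;d-r_2+\rho\}\;\le\;t\;\le\;r_1-\rho.
\]

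Next I would observe that such $t$ exists precisely when $d-r_2+\rho\le r_1-\rho$, i.e. $d\le r_1+r_2-2\rho$. Since $2\rho=\frac{\sqrt{2}}{2}\delta$, this is exactly the hypothesis $r_1+r_2-|\mathbf{c}_1-\mathbf{c}_2|\ge\frac{\sqrt{2}}{2}\delta$. The lower bound $\max\{0,\ldots\}\le r_1-\rho$ is ensured by $r_1\ge\rho$. Edge cases are trivially absorbed: if $d=0$, take $\mathbf{p}=\mathbf{c}_1$; if $d<r_2-\rho$, take $t=0$ so that $\mathbf{p}=\mathbf{c}_1$ lies well inside $B(\mathbf{c}_2,r_2-\rho)$, and $B(\mathbf{p},\rho)\subset B(\mathbf{c}_1,r_1)$ by $r_1\ge\rho$.

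There is no real obstacle here — the argument is elementary Euclidean geometry and the hypothesis $r_1+r_2-|\mathbf{c}_1-\mathbf{c}_2|\ge 2\rho$ is precisely tuned to make the triangle-inequality pigeonhole go through. The only mild care is bookkeeping the degenerate subcase $d<r_2-\rho$ to keep $t\in[0,d]$. Once the enclosed ball $B(\mathbf{p},\rho)$ is produced, the final volume inequality $|B(\mathbf{c}_1,r_1)\cap B(\mathbf{c}_2,r_2)|\ge |B(\mathbf{p},\rho)|=(\sqrt{2}/4)^n\delta^n|B(\mathbf{0},1)|$ is immediate.
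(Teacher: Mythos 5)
Your proof is correct. Note that the paper does not actually supply a proof of this lemma at all — it is introduced with the phrase ``state an obvious fact as a lemma'' and left unproved — so there is no in-paper argument to compare against. Your triangle-inequality argument is the natural one: reduce containment $B(\mathbf{p},\rho)\subset B(\mathbf{c}_i,r_i)$ to $|\mathbf{p}-\mathbf{c}_i|\le r_i-\rho$, observe that the hypothesis $r_1+r_2-|\mathbf{c}_1-\mathbf{c}_2|\ge 2\rho$ is exactly what guarantees a common center $\mathbf{p}$ on the segment between $\mathbf{c}_1$ and $\mathbf{c}_2$, and use $r_i\ge\rho$ to keep the target radii non-negative and to absorb the degenerate cases $d=0$ and $d<r_2-\rho$. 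This correctly and completely fills in the omitted argument.
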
 
\begin{figure}[h]
\centering
\tikzset{every picture/.style={line width=0.75pt}}
\begin{tikzpicture}[x=0.75pt,y=0.75pt,yscale=-1,xscale=1]

\draw  [fill={rgb, 255:red, 155; green, 155; blue, 155 }  ,fill opacity=1 ] (177.93,102.27) .. controls (231.37,47.2) and (418.71,-5.96) .. (462.17,38.66) .. controls (505.62,83.28) and (499.16,164.93) .. (467.45,209.55) .. controls (435.74,254.17) and (414.6,235.18) .. (359.4,205.75) .. controls (304.19,176.32) and (289.51,293.1) .. (224.91,271.26) .. controls (160.31,249.42) and (124.49,157.33) .. (177.93,102.27) -- cycle ;
\draw  [fill={rgb, 255:red, 184; green, 233; blue, 134 }  ,fill opacity=1 ] (202,106) .. controls (268.3,55.11) and (411.02,14.25) .. (450,52) .. controls (488.98,89.75) and (469.34,194.34) .. (442,208.5) .. controls (414.66,222.66) and (389.63,196.6) .. (346,181.5) .. controls (302.37,166.4) and (278,264.5) .. (230,256.5) .. controls (182,248.5) and (135.7,156.89) .. (202,106) -- cycle ;
\draw  [fill={rgb, 255:red, 255; green, 255; blue, 255 }  ,fill opacity=1 ] (199.79,119.21) .. controls (225,90.5) and (416,14.5) .. (447,62.5) .. controls (478,110.5) and (459,185.5) .. (437,199.5) .. controls (415,213.5) and (373,176.5) .. (338,172.5) .. controls (303,168.5) and (287.45,214.97) .. (267,227.5) .. controls (246.55,240.03) and (237.58,261.59) .. (205,229.5) .. controls (172.42,197.41) and (174.59,147.93) .. (199.79,119.21) -- cycle ;
\draw    (419,184.5) -- (431.53,206.88) ;
\draw [shift={(433,209.5)}, rotate = 240.75] [fill={rgb, 255:red, 0; green, 0; blue, 0 }  ][line width=0.08]  [draw opacity=0] (8.93,-4.29) -- (0,0) -- (8.93,4.29) -- cycle    ;

\draw (245.27,59.83) node [anchor=north west][inner sep=0.75pt]  [font=\footnotesize] [align=left] {$\displaystyle U _{1}$};
\draw (407.2,168.16) node [anchor=north west][inner sep=0.75pt]  [font=\footnotesize] [align=left] {$\displaystyle U _{2}$};
\end{tikzpicture}
\caption{Regions for Lemma \ref{corelemma}}
\label{Regions}
\end{figure}
We also provide Figure \ref{Regions} to help us simplify the notations.

The smoothness of $\partial\Omega$ can help us  define the Fermi coordinates, that is there exists $\epsilon_0>0$, when $\epsilon<\epsilon_0$, the region $\Omega_\epsilon$
satisfies that for $\xx\in \Omega_{\epsilon}$, there exists a unique point $\xx_0\in\partial\Omega$ such that 
\[\xx=\xx_0+d(\xx,\partial\Omega)\nu(\xx_0)\]
where $\nu(\xx_0)$ is the unit inner normal at $\xx_0$. Meanwhile the smooth boundary (at least $C^2$) satisfies uniform interior ball condition, which is $\exists r_0>0$, for $\xx_0\in \partial \Omega$, there exists a open ball $B\subset \Omega$ with radius $r_0$ such that $\bar{B}\cap \partial\Omega=\{\xx_0\}$. 
We can take $r_0<\dfrac{\epsilon_0}{2}$, now the center of $B$ is exactly $\xx_0+r_0\nu(\xx_0)$. 

Here we take $\delta$ small enough such that $4\sqrt{2}\delta<\min\{\epsilon_0,2r_0\}$.
As shown in Figure \ref{Regions}, we first take $U_1=\Omega_{\frac{7\sqrt{2}}{2}\delta}$ and $U_2=\Omega_{4\sqrt{2}\delta}\backslash \Omega_{\frac{7\sqrt{2}}{2}\delta}$.
For $\xx\in U_2$ and $\yy\in U_1$, we have 
\[u^2(\xx)\leq C(u(\xx)-u(\yy))^2+Cu^2(\yy)\]
Multiply $R_{\delta}(\xx,\yy)$ and integrate in $U_2$ with respect to $\xx$ and integrate in $U_1$ with respect to $\yy$,
\[
\begin{aligned}
&\int_{U_1}\int_{U_2}R_{\delta}(\xx,\yy)u^2(\xx)\d\xx\d\yy\\
\leq& C\int_{U_1}\int_{U_2}R_{\delta}(\xx,\yy)(u(\xx)-u(\yy))^2\d\xx\d\yy+C\int_{U_1}\int_{U_2}R_{\delta}(\xx,\yy)u^2(\yy)\d\xx\d\yy\\
\leq& C\int_{\Omega}\int_{\Omega}R_{\delta}(\xx,\yy)(u(\xx)-u(\yy))^2\d\xx\d\yy+C \int_{U_1}u^2(\yy)\int_{\Omega}R_{\delta}(\xx,\yy)\d\xx\d\yy\\
\leq& C\int_{\Omega}\int_{\Omega}R_{\delta}(\xx,\yy)(u(\xx)-u(\yy))^2\d\xx\d\yy+C \int_{U_1} u^2(\yy)\d\yy.
\end{aligned}
\]
For $\xx\in U_2$, $\frac{7\sqrt{2}}{2}\delta\leq d(\xx,\partial\Omega)<4\sqrt{2}\delta$ and there is a $\xx_0\in\partial\Omega$ such that 
\[\xx=\xx_0+d(\xx,\partial\Omega)\nu(\xx_0).\]
We can know $B_1\overset{d}{=}B(\xx_0+\frac{7\sqrt{2}}{4}\delta\nu(\xx_0),\frac{7\sqrt{2}}{4}\delta)\subset U_1$. Thus
\begin{equation*}
\begin{aligned}
&\int_{U_1}R_{\delta}(\xx,\yy)\d\yy\geq \int_{U_1\cap B(\xx,\sqrt{2}\delta)}C_{\delta}R\left(\dfrac{|\xx-\yy|^2}{4\delta^2}\right)\d\yy\\
&\hspace{5cm}\geq \gamma_0\int_{U_1\cap B(\xx,\sqrt{2}\delta)}C_{\delta}\d\yy\geq C_{\delta}\gamma_0\int_{B_1\cap B(\xx,\sqrt{2}\delta)}\d\yy\geq C.
\end{aligned}
\end{equation*}
The last inequality is given by Lemma \ref{ball} because of the fact $|\xx_0+\frac{7\sqrt{2}}{4}\delta\nu(\xx_0)-\xx|\leq \frac{7\sqrt{2}}{4}\delta+\frac{\sqrt{2}}{2}\delta$. The lower bound is independent of $\xx$ and $\delta$.
Based on this result,
\[
\begin{aligned}
\int_{U_1}\int_{U_2}R_{\delta}(\xx,\yy)u^2(\xx)\d\xx\d\yy=\int_{U_2}u^2(\xx)\int_{U_1}R_{\delta}(\xx,\yy)\d\yy\d\xx\geq C\int_{U_2}u^2(\xx)\d\xx.
\end{aligned}	
\]
Thus 
\[\int_{U_2}u^2(\xx)\d\xx\leq C\int_{\Omega}\int_{\Omega}R_{\delta}(\xx,\yy)(u(\xx)-u(\yy))^2\d\xx\d\yy+C \int_{U_1} u^2(\xx)\d\xx,\]
which means
\[\int_{\Omega_{4\sqrt{2}\delta}}u^2(\xx)\d\xx\leq C\int_{\Omega}\int_{\Omega}R_{\delta}(\xx,\yy)(u(\xx)-u(\yy))^2\d\xx\d\yy+C \int_{\Omega_{\frac{7\sqrt{2}}{2}\delta}} u^2(\xx)\d\xx\]
Further we can estimate the second term of the right hand side in the same way by taking $U_1=\Omega_{3\sqrt{2}\delta}$ and $U_2=\Omega_{\frac{7\sqrt{2}}{2}\delta}\backslash\Omega_{3\sqrt{2}\delta}$ and $B_1=B(\xx_0+\frac{3\sqrt{2}}{2}\delta\nu(\xx_0),\frac{3\sqrt{2}}{2}\delta)$ to get 
\begin{equation*}
\begin{aligned}
\int_{\Omega_{4\sqrt{2}\delta}}u^2(\xx)\d\xx&\leq C\int_{\Omega}\int_{\Omega}R_{\delta}(\xx,\yy)(u(\xx)-u(\yy))^2\d\xx\d\yy+C \int_{\Omega_{\frac{7\sqrt{2}}{2}\delta}} u^2(\xx)\d\xx\\
&\leq C\int_{\Omega}\int_{\Omega}R_{\delta}(\xx,\yy)(u(\xx)-u(\yy))^2\d\xx\d\yy+C \int_{\Omega_{3\sqrt{2}\delta}} u^2(\xx)\d\xx
\end{aligned}
\end{equation*}
Repeat this process by approaching to $\partial\Omega$ with step $\frac{\sqrt{2}}{2}\delta$ until we get the estimation 
\[\int_{\Omega_{4\sqrt{2}\delta}}u^2(\xx)\d\xx\leq C\int_{\Omega}\int_{\Omega}R_{\delta}(\xx,\yy)(u(\xx)-u(\yy))^2\d\xx\d\yy+C \int_{\Omega_{\frac{\sqrt{2}}{2}\delta}} u^2(\xx)\d\xx.\]
when $d(\xx,\partial\Omega)<\frac{\sqrt{2}}{2}\delta$,
\[
\begin{aligned}
\int_{\Omega_{\frac{\sqrt{2}}{2}\delta}} u^2(\xx)\d\xx\leq C\delta\int_{\Omega_{\frac{\sqrt{2}}{2}\delta}} u^2(\xx)\intpo \bar{R}_{\delta}(\xx,\yy)\d S_{\yy}\d\xx\leq C\delta\int_{\Omega} u^2(\xx)\intpo \bar{R}_{\delta}(\xx,\yy)\d S_{\yy}\d\xx
\end{aligned}	
\]
Here we can get 
\begin{equation*}
\begin{aligned}
	&\int_{\Omega}u^2(\xx)\intpo C_\delta \bar{R}\left(\dfrac{|\xx-\yy|^2}{32\delta^2}\right)\d S_\yy\d\xx\\
	=&\int_{\Omega_{4\sqrt{2}\delta}}u^2(\xx)\intpo C_\delta \bar{R}\left(\dfrac{|\xx-\yy|^2}{32\delta^2}\right)\d S_\yy\d\xx\\
	\leq& \dfrac{C}{\delta} \int_{\Omega_{4\sqrt{2}\delta}} u^2(\xx)\d\xx\\
	 \leq& \dfrac{C}{\delta}\int_{\Omega}\int_{\Omega}R_{\delta}(\xx,\yy)(u(\xx)-u(\yy))^2\d\xx\d\yy+\dfrac{C}{\delta} \int_{\Omega_{\frac{\sqrt{2}}{2}\delta}} u^2(\xx)\d\xx\\
	\leq& \dfrac{C}{\delta}\int_{\Omega}\int_{\Omega}R_{\delta}(\xx,\yy)(u(\xx)-u(\yy))^2\d\xx\d\yy+C \int_{\Omega}u^2(\xx)\intpo \bar{R}_{\delta}(\xx,\yy)\d S_\yy\d\xx
\end{aligned}
\end{equation*}

\section{Proof of formula (\ref{grad first term})}
\label{Appendix grad first term}
\begin{equation*}
    \nabla\left(\dfrac{\delta^2\langle f,\bar{R}_\delta\rangle(\xx)}{\phi_\delta(\xx)}\right)=\dfrac{\delta^2\nabla\langle f,\bar{R}_\delta\rangle(\xx)}{\phi_\delta(\xx)}-\dfrac{\delta^2 \langle f,\bar{R}_\delta\rangle(\xx)}{\phi^2_\delta(\xx)}\nabla w_\delta(\xx)-\dfrac{2\delta^3 \langle f,\bar{R}_\delta\rangle(\xx)}{\phi^2_\delta(\xx)}\nabla s_\delta(\xx)
\end{equation*}
For any $\alpha(\xx)\in L^2(\Omega)$,
\begin{equationa*}
\into \alpha(\xx)\dfrac{\delta^2\nabla\langle f,\bar{R}_\delta\rangle(\xx)}{\phi_\delta(\xx)}\d\xx&\leq C\delta^2 \into\alpha(\xx)\nabla\langle f,\bar{R}_\delta\rangle(\xx)\d\xx\\
&=C\delta^2\left\langle f,\into\nabla_\xx\bRd\alpha(\xx)\d\xx\right\rangle\\
&\leq C\delta^2\norm{f}_{H^{-1}(V^{2\delta})}\norm{\tilde{\alpha}}_{H^1_0(V^{2\delta})}.
\end{equationa*}
where 
\begin{equation*}
    \tilde{\alpha}(\yy)=\into\nabla_\xx\bRd\alpha(\xx)\d\xx.
\end{equation*}
Notice that 
\begin{equationa*}
\norm{\tilde{\alpha}}_{L^2(V^{2\delta})}^2&=\int_{V^{2\delta}}\left(\into\nabla_\xx\bRd\alpha(\xx)\d\xx\right)^2\d\yy\\
&\leq \dfrac{C}{\delta^2}\into\alpha^2(\xx)\int_{V^{2\delta}}\Rd\d\yy\d\xx\\
&\leq \dfrac{C}{\delta^2}\norm{\alpha}_{L^2(\Omega)}^2.
\end{equationa*}
and 
\begin{equationa*}
\norm{\nabla\tilde{\alpha}}_{L^2(V^{2\delta})}^2&=\int_{V^{2\delta}}\left(\into\nabla_\yy\nabla_\xx \bRd \alpha(\xx)\d\xx\right)^2\d\yy\\
&\leq \dfrac{C}{\delta^4}\int_\Omega\alpha^2(\xx)\int_{V^{2\delta}}(|R'(\xx,\yy)|+\Rd)\d\yy\d\xx\\
&\leq \dfrac{C}{\delta^4}\norm{\alpha}_{L^2(\Omega)}^2,
\end{equationa*}
here we use the fact 
\begin{equation*}
    |\nabla_y\nabla_x \bRd|\leq \dfrac{C}{\delta^2}(|R'(\xx,\yy)|+\Rd).
\end{equation*}
Now we get 
\begin{equation*}
    \norm{\tilde{\alpha}}_{H^1_0(V^{2\delta})}\leq \dfrac{C}{\delta^2}\norm{\alpha}_{L^2(\Omega)}
\end{equation*}
and further 
\begin{equation*}
    \into \alpha(\xx)\dfrac{\delta^2\nabla\langle f,\bar{R}_\delta\rangle(\xx)}{\phi_\delta(\xx)}\d\xx\leq C\norm{f}_{H^{-1}(V^{2\delta})}\norm{\alpha}_{L^2(\Omega)}
\end{equation*}
which implies 
\begin{equation*}
\label{J1L2}
\left\|\dfrac{\delta^2\nabla\langle f,\bar{R}_\delta\rangle(\xx)}{\phi_\delta(\xx)}\right\|_{L^2(\Omega)}\leq C\norm{f}_{H^{-1}(V^{2\delta})} 
\end{equation*}

Meanwhile, for any $\beta(\xx) \in L^2(\Omega)$,
\begin{equationa*}
&\into \beta(\xx)\dfrac{\delta^2 \langle f,\bar{R}_\delta\rangle(\xx)}{\phi^2_\delta(\xx)}\nabla w_\delta(\xx)\d\xx\\
\leq& C\delta^2\left\langle f,\into\beta(\xx)\bRd\nabla w_\delta(\xx)\d\xx\right\rangle\\
\leq& C\delta^2\norm{f}_{H^{-1}{(V^{2\delta})}}\norm{\tilde{\beta}}_{H_0^1(V^{2\delta})}
\end{equationa*}
where 
\begin{equation*}
\tilde{\beta}(\yy)=\into\beta(\xx)\bRd\nabla w_\delta(\xx)\d\xx.
\end{equation*}
We can estimate $\norm{\tilde{\beta}}_{L^2(V^{2\delta})}$ and $\norm{\nabla\tilde{\beta}}_{L^2(V^{2\delta})}$.
\begin{equationa*}
    \norm{\tilde{\beta}}_{L^2(V^{2\delta})}^2&=\int_{V^{2\delta}}\left(\into\beta(\xx)\bRd\into\nabla_\xx R_\delta(\xx,\zz)\d\zz\d\xx\right)^2\d\yy\\
    &\leq \int_{V^{2\delta}}\left(\into \bar{R}_\delta(\xx,\yy)\d\xx\right)\left(\into\bRd \beta^2(\xx)\left(\into\nabla_\xx R_\delta(\xx,\zz)\d\zz\right)^2\d\xx\right)\d\yy\\
    &\leq \dfrac{C}{\delta^2}\int_{V^{2\delta}}\into\bRd\beta^2(\xx)\d\xx\d\yy\\
    &\leq \dfrac{C}{\delta^2}\norm{\beta}_{L^2(\Omega)}^2
\end{equationa*}
and 
\begin{equationa*}
    \norm{\nabla\tilde{\beta}}^2_{L^2(V^{2\delta})}&=\int_{V^{2\delta}}\left(\into\beta(\xx)\nabla_\yy\bRd\into\nabla_\xx R_\delta(\xx,\zz)\d\zz\d\xx\right)^2\d\yy\\
    &\leq \dfrac{C}{\delta^4}\int_{V^{2\delta}}\into\bRd\beta^2(\xx)\d\xx\d\yy\\
    &\leq \dfrac{C}{\delta^4}\norm{\beta}_{L^2(\Omega)}^2
\end{equationa*}
These two estimations imply
\begin{equation*}
    \left\|\dfrac{\delta^2 \langle f,\bar{R}_\delta\rangle(\xx)}{\phi^2_\delta(\xx)}\nabla w_\delta(\xx)\right\|_{L^2(\Omega)}\leq C\norm{f}_{H^{-1}(V^{2\delta})}.
\end{equation*}
Similarly, we can also prove 
\begin{equation*}
    \left\|\dfrac{2\delta^3 \langle f,\bar{R}_\delta\rangle(\xx)}{\phi^2_\delta(\xx)}\nabla s_\delta(\xx)\right\|_{L^2(\Omega)}\leq C\norm{f}_{H^{-1}(V^{2\delta})}.
\end{equation*}
Combine these results, we have proved (\ref{grad first term}).

\section{Proof of Lemma \ref{lem:error}}
\label{Appendix lem:error}
First, we multiply $\bar{R}_\delta(\xx,\yy)$ on $\Delta u(\xx)$, take integral  with respect to $\yy$ over $\Omega$ and apply integration by parts and using the relation between $\bar{R}$ and $R$, 
\begin{align*}
&    \frac{1}{2\delta^2}\int_\Omega R_\delta(\xx,\yy) (\xx-\yy)\cdot\nabla u(\yy)\mathd \yy -\int_{\partial\Omega} \bar{R}_\delta(\xx,\yy) \frac{\p u}{\p{\nn}}(\yy)\mathd S_{\yy}\\
&\quad = -\int_\Omega \bar{R}_\delta(\xx,\yy)\Delta u(\yy)\mathd \yy,\quad \xx\in \Omega.
\end{align*}
Using Taylor expansion, the first term of left hand side can be calculated as
\begin{align*}
    &\frac{1}{2\delta^2}\int_\Omega R_\delta(\xx,\yy) (\xx-\yy)\cdot\nabla u(\yy)\mathd \yy\\
    =&\frac{1}{2\delta^2}\int_\Omega R_\delta(\xx,\yy) \left(u(\xx)-u(\yy)-\frac{1}{2}\sum_{i,j=1}^n(x_i-y_i)(x_j-y_j)\frac{\partial^2 u(\yy)}{\partial y_i\partial y_j}\right.\\
    &\left.-\frac{1}{6}\sum_{i,j,k=1}^n(x_i-y_i)(x_j-y_j)(x_k-y_k)\frac{\partial^3 u(\yy)}{\partial y_i\partial y_j\p y_k}\right)\mathd \yy+O(\delta^2)
\end{align*}
Next, we will calculate second order term and third order term separately.
\begin{align*}
    &\frac{1}{2\delta^2}\int_\Omega R_\delta(\xx,\yy) \left(\frac{1}{2}\sum_{i,j=1}^n(x_i-y_i)(x_j-y_j)\frac{\partial^2 u(\yy)}{\partial y_i\partial y_j}\right)\mathd \yy\\
    =&\frac{1}{2}\sum_{i,j=1}^n\int_\Omega \frac{\partial}{\partial y_i}\bar{R}_\delta(\xx,\yy) (x_j-y_j)\frac{\partial^2 u(\yy)}{\partial y_i\partial y_j}\mathd \yy\\
    =&\frac{1}{2}\int_\Omega \bar{R}_\delta(\xx,\yy) \Delta u(\yy)\mathd \yy-\frac{1}{2}\sum_{j=1}^n\int_\Omega \bar{R}_\delta(\xx,\yy) (x_j-y_j)\frac{\partial }{\partial y_j}\Delta u(\yy)\mathd \yy\\
    &+\frac{1}{2}\sum_{i,j=1}^n\int_{\p\Omega} \bar{R}_\delta(\xx,\yy) n_i(\yy)(x_j-y_j)\frac{\partial^2 u(\yy)}{\partial y_i\partial y_j}\mathd S_{\yy}\,.
\end{align*}
The third order term becomes
\begin{align*}
    &\frac{1}{2\delta^2}\int_\Omega R_\delta(\xx,\yy) \left(\frac{1}{6}\sum_{i,j,k=1}^n(x_i-y_i)(x_j-y_j)(x_k-y_k)\frac{\partial^3 u(\yy)}{\partial y_i\partial y_j\p y_k}\right)\mathd \yy\\
    =&\frac{1}{6}\sum_{i,j,k=1}^n\int_\Omega \frac{\partial}{\partial y_i}\bar{R}_\delta(\xx,\yy) (x_j-y_j)(x_k-y_k)\frac{\partial^3 u(\yy)}{\partial y_i\partial y_j\p y_k}\mathd \yy\\
    =&\frac{1}{3}\sum_{j=1}^n\int_\Omega \bar{R}_\delta(\xx,\yy) (x_j-y_j)\frac{\partial }{\partial y_j}\Delta u(\yy)\mathd \yy\\
    &\qquad+\frac{1}{6}\sum_{i,j,k=1}^n\int_{\p \Omega} \bar{R}_\delta(\xx,\yy) n_i(\yy)(x_j-y_j)(x_k-y_k)\frac{\partial^3 u(\yy)}{\partial y_i\partial y_j\p y_k}\mathd S_{\yy}+O(\delta^2)\,.
\end{align*}
The first term in above derivation can be simplified further using integration by parts.
\begin{align*}
&\sum_{j=1}^n\int_\Omega \bar{R}_\delta(\xx,\yy) (x_j-y_j)\frac{\partial }{\partial y_j}\Delta u(\yy)\mathd \yy\\
   = &2\delta^2\sum_{j=1}^n\int_\Omega \frac{\p }{\p y_j}\bar{\bar{R}}_\delta(\xx,\yy) \frac{\partial }{\partial y_j}\Delta u(\yy)\mathd \yy\\
   =& 2\delta^2\int_{\partial \Omega} \bar{\bar{R}}_\delta(\xx,\yy) \frac{\partial }{\partial \nn}\Delta u(\yy)\mathd S_{\yy}+O(\delta^2)\,.
\end{align*}
where $$\bar{\bar{R}}_\delta(\xx,\yy)=C_\delta\bar{\bar{R}}\left(\frac{|\xx-\yy|^2}{4\delta^2}\right),\quad \bar{\bar{R}}(r)=\int_r^1 \bar{R}(s)\mathd s.$$
Combining above derivations together, we can obtain
\begin{align*}
  &  \frac{1}{\delta^2}\int_\Omega R_\delta(\xx,\yy) (u(\xx)-u(\yy))\mathd \yy -2\int_{\partial\Omega} \bar{R}_\delta(\xx,\yy) \frac{\p u}{\p{\nn}}(\yy)\mathd S_{\yy}\\
  &\qquad = -\int_\Omega \bar{R}_\delta(\xx,\yy)\Delta u(\yy)\mathd \yy+r_u(\xx)+O(\delta^2),\quad \xx\in \Omega\, ,\nonumber
\end{align*}
where 
\begin{align*}
    r_u(\xx)=
    &\sum_{i,j=1}^n\int_{\p\Omega} \bar{R}_\delta(\xx,\yy) n_i(\yy)(x_j-y_j)\frac{\partial^2 u(\yy)}{\partial y_i\partial y_j}\mathd S_{\yy}-\frac{2\delta^2}{3}\int_{\partial \Omega} \bar{\bar{R}}_\delta(\xx,\yy) \frac{\partial }{\partial \nn}\Delta u(\yy)\mathd S_{\yy}\\
    &+\frac{1}{3}\sum_{i,j,k=1}^n\int_{\p \Omega} \bar{R}_\delta(\xx,\yy) n_i(\yy)(x_j-y_j)(x_k-y_k)\frac{\partial^3 u(\yy)}{\partial y_i\partial y_j\p y_k}\mathd S_{\yy}.\nonumber
\end{align*}
It is easy to verify that for $\xx\in \Omega$
\begin{align*}
    |r_u(\xx)|\le& C \delta \int_{\p\Omega} \bar{R}_\delta(\xx,\yy)\mathd S_{\yy}+C \delta^2 \int_{\p\Omega} \bar{\bar{R}}_\delta(\xx,\yy)\mathd S_{\yy}\\
    \le &  C \delta \int_{\p\Omega} \bar{R}_\delta(\xx,\yy)\mathd S_{\yy}.\nonumber
\end{align*}
Here we use the fact that 
\begin{align*}
    \bar{\bar{R}}(r)=\int_{r}^1\bar{R}(s)\mathd s \le \bar{R}(r)(1-r)\le \bar{R}(r).
\end{align*}

\bibliographystyle{abbrv}

\bibliography{ref}

\begin{thebibliography}{10}

\bibitem{alfaro2017propagation}
M.~Alfaro and J.~Coville.
\newblock Propagation phenomena in monostable integro-differential equations:
  acceleration or not?
\newblock {\em Journal of Differential Equations}, 263(9):5727--5758, 2017.

\bibitem{askari2008peridynamics}
E.~Askari, F.~Bobaru, R.~Lehoucq, M.~Parks, S.~Silling, and O.~Weckner.
\newblock Peridynamics for multiscale materials modeling.
\newblock In {\em Journal of Physics: Conference Series}, volume 125, page
  012078. IOP Publishing, 2008.

\bibitem{bavzant2003nonlocal}
Z.~P. Ba{\v{z}}ant and M.~Jir{\'a}sek.
\newblock Nonlocal integral formulations of plasticity and damage: survey of
  progress.
\newblock In {\em Perspectives in Civil Engineering: Commemorating the 150th
  Anniversary of the American Society of Civil Engineers}, pages 21--52. ASCE,
  2003.

\bibitem{blandin2016well}
S.~Blandin and P.~Goatin.
\newblock Well-posedness of a conservation law with non-local flux arising in
  traffic flow modeling.
\newblock {\em Numerische Mathematik}, 132(2):217--241, 2016.

\bibitem{bobaru2009convergence}
F.~Bobaru, M.~Yang, L.~F. Alves, S.~A. Silling, E.~Askari, and J.~Xu.
\newblock Convergence, adaptive refinement, and scaling in 1d peridynamics.
\newblock {\em International Journal for Numerical Methods in Engineering},
  77(6):852--877, 2009.

\bibitem{dayal2006kinetics}
K.~Dayal and K.~Bhattacharya.
\newblock Kinetics of phase transformations in the peridynamic formulation of
  continuum mechanics.
\newblock {\em Journal of the Mechanics and Physics of Solids},
  54(9):1811--1842, 2006.

\bibitem{dayal2007real}
K.~Dayal and K.~Bhattacharya.
\newblock A real-space non-local phase-field model of ferroelectric domain
  patterns in complex geometries.
\newblock {\em Acta materialia}, 55(6):1907--1917, 2007.

\bibitem{du2019nonlocal}
Q.~Du.
\newblock {\em Nonlocal Modeling, Analysis, and Computation: Nonlocal Modeling,
  Analysis, and Computation}.
\newblock SIAM, 2019.

\bibitem{du2013nonlocal}
Q.~Du, M.~Gunzburger, R.~B. Lehoucq, and K.~Zhou.
\newblock A nonlocal vector calculus, nonlocal volume-constrained problems, and
  nonlocal balance laws.
\newblock {\em Mathematical Models and Methods in Applied Sciences},
  23(03):493--540, 2013.

\bibitem{NonlocalStokes}
Q.~Du and Z.~Shi.
\newblock A nonlocal stokes system with volume constraints.
\newblock {\em Numerical Mathematics: Theory, Methods and Applications},
  15(4):903--937, 2022.

\bibitem{du2017fast}
Q.~Du and J.~Yang.
\newblock Fast and accurate implementation of fourier spectral approximations
  of nonlocal diffusion operators and its applications.
\newblock {\em Journal of Computational Physics}, 332:118--134, 2017.

\bibitem{DElia_Du_Glusa_Gunzburger_Tian_Zhou_2020}
M.~D’Elia, Q.~Du, C.~Glusa, M.~Gunzburger, X.~Tian, and Z.~Zhou.
\newblock Numerical methods for nonlocal and fractional models.
\newblock {\em Acta Numerica}, 29:1–124, 2020.

\bibitem{kao2010random}
C.-Y. Kao, Y.~Lou, and W.~Shen.
\newblock Random dispersal vs. non-local dispersal.
\newblock {\em Discrete \& Continuous Dynamical Systems}, 26(2):551, 2010.

\bibitem{lee2021second}
H.~Lee and Q.~Du.
\newblock Second order accurate dirichlet boundary conditions for linear
  nonlocal diffusion problems.
\newblock {\em arXiv preprint arXiv:2108.11817}, 2021.

\bibitem{li2017point}
Z.~Li, Z.~Shi, and J.~Sun.
\newblock Point integral method for solving poisson-type equations on manifolds
  from point clouds with convergence guarantees.
\newblock {\em Communications in Computational Physics}, 22(1):228--258, 2017.

\bibitem{macia2011theoretical}
F.~Macia, M.~Antuono, L.~M. Gonz{\'a}lez, and A.~Colagrossi.
\newblock Theoretical analysis of the no-slip boundary condition enforcement in
  sph methods.
\newblock {\em Progress of theoretical physics}, 125(6):1091--1121, 2011.

\bibitem{maz2013sobolev}
V.~Maz'ya.
\newblock {\em Sobolev spaces}.
\newblock Springer, 2013.

\bibitem{mengesha2013analysis}
T.~Mengesha and Q.~Du.
\newblock Analysis of a scalar nonlocal peridynamic model with a sign changing
  kernel.
\newblock {\em Discrete \& Continuous Dynamical Systems-Series B}, 18(5), 2013.

\bibitem{osher2017low}
S.~Osher, Z.~Shi, and W.~Zhu.
\newblock Low dimensional manifold model for image processing.
\newblock {\em SIAM Journal on Imaging Sciences}, 10(4):1669--1690, 2017.

\bibitem{oterkus2012peridynamic}
E.~Oterkus and E.~Madenci.
\newblock Peridynamic analysis of fiber-reinforced composite materials.
\newblock {\em Journal of Mechanics of Materials and Structures}, 7(1):45--84,
  2012.

\bibitem{shi2017weighted}
Z.~Shi, S.~Osher, and W.~Zhu.
\newblock Weighted nonlocal laplacian on interpolation from sparse data.
\newblock {\em Journal of Scientific Computing}, 73(2):1164--1177, 2017.

\bibitem{shi2017convergence}
Z.~Shi and J.~Sun.
\newblock Convergence of the point integral method for laplace-beltrami
  equation on point cloud.
\newblock {\em Research in the Mathematical Sciences}, 4(1):1--39, 2017.

\bibitem{shi2018harmonic}
Z.~Shi, J.~Sun, and M.~Tian.
\newblock Harmonic extension on the point cloud.
\newblock {\em Multiscale Modeling \& Simulation}, 16(1):215--247, 2018.

\bibitem{silling2010crack}
S.~A. Silling, O.~Weckner, E.~Askari, and F.~Bobaru.
\newblock Crack nucleation in a peridynamic solid.
\newblock {\em International Journal of Fracture}, 162(1):219--227, 2010.

\bibitem{tao2018nonlocal}
Y.~Tao, Q.~Sun, Q.~Du, and W.~Liu.
\newblock Nonlocal neural networks, nonlocal diffusion and nonlocal modeling.
\newblock {\em Advances in Neural Information Processing Systems}, 31, 2018.

\bibitem{tao2017nonlocal}
Y.~Tao, X.~Tian, and Q.~Du.
\newblock Nonlocal diffusion and peridynamic models with neumann type
  constraints and their numerical approximations.
\newblock {\em Applied Mathematics and Computation}, 305:282--298, 2017.

\bibitem{taylor2015two}
M.~Taylor and D.~J. Steigmann.
\newblock A two-dimensional peridynamic model for thin plates.
\newblock {\em Mathematics and Mechanics of Solids}, 20(8):998--1010, 2015.

\bibitem{tian2014asymptotically}
X.~Tian and Q.~Du.
\newblock Asymptotically compatible schemes and applications to robust
  discretization of nonlocal models.
\newblock {\em SIAM Journal on Numerical Analysis}, 52(4):1641--1665, 2014.

\bibitem{trask2019asymptotically}
N.~Trask, H.~You, Y.~Yu, and M.~L. Parks.
\newblock An asymptotically compatible meshfree quadrature rule for nonlocal
  problems with applications to peridynamics.
\newblock {\em Computer Methods in Applied Mechanics and Engineering},
  343:151--165, 2019.

\bibitem{vazquez2012nonlinear}
J.~L. V{\'a}zquez.
\newblock Nonlinear diffusion with fractional laplacian operators.
\newblock In {\em Nonlinear partial differential equations}, pages 271--298.
  Springer, 2012.

\bibitem{wang2023nonlocal}
T.~Wang and Z.~Shi.
\newblock A nonlocal diffusion model with $ h^{1}$ convergence for dirichlet
  boundary.
\newblock {\em arXiv preprint arXiv:2302.03441}, 2023.

\bibitem{wang2018non}
X.~Wang, R.~Girshick, A.~Gupta, and K.~He.
\newblock Non-local neural networks.
\newblock In {\em Proceedings of the IEEE conference on computer vision and
  pattern recognition}, pages 7794--7803, 2018.

\bibitem{you2020asymptotically}
H.~You, X.~Lu, N.~Task, and Y.~Yu.
\newblock An asymptotically compatible approach for neumann-type boundary
  condition on nonlocal problems.
\newblock {\em ESAIM: Mathematical Modelling and Numerical Analysis},
  54(4):1373--1413, 2020.

\bibitem{zhang2018accurate}
X.~Zhang, J.~Wu, and L.~Ju.
\newblock An accurate and asymptotically compatible collocation scheme for
  nonlocal diffusion problems.
\newblock {\em Applied Numerical Mathematics}, 133:52--68, 2018.

\bibitem{zhang2021second}
Y.~Zhang and Z.~Shi.
\newblock A second-order nonlocal approximation for poisson model with
  dirichlet boundary.
\newblock {\em Research in the Mathematical Sciences}, 10(3):36, 2023.

\end{thebibliography}

\end{document}